\newtheorem{thm}{Theorem}[section]
\newtheorem{lm}[thm]{Lemma}
\newtheorem{prop}[thm]{Proposition}
\newtheorem{cor}[thm]{Corollary}
\theoremstyle{definition}
\newtheorem{definition}[thm]{Definition}
\newtheorem{conj}[thm]{Conjecture}
\newtheorem{prop-defi}[thm]{Proposition-Definition}
\newtheorem{notation}[thm]{Notation}
\newtheorem{ex}[thm]{Example}
\theoremstyle{remark}
\newtheorem{rem}[thm]{Remark}
\newcommand{\Zdz}{\mathbb{Z}/2\mathbb{Z}}
\newcommand{\Pbold}{\boldsymbol{\mathcal{P}}}
\newcommand{\Vbold}{\boldsymbol{\mathcal{V}}}
\newcommand{\Ibold}{\mathbf{I}}
\newcommand{\Gammabold}{\boldsymbol{\Gamma} } 
\newcommand{\Sbold}{\textbf{S} } 
\newcommand{\Qbold}[1]{\ensuremath{\textbf{Q}^{#1}}}
\newcommand{\PF}{\boldsymbol{\Pi}}
\newcommand{\svec}{\mathfrak{svec}}
\newcommand{\ev}{\mathrm{ev}}
\newcommand{\Ext}{\mathrm{Ext}}
\newcommand{\Hom}{\mathrm{Hom}}
\newcommand{\Pcal}{\mathcal{P}}
\newcommand{\A}{\mathcal{A}}
\newcommand{\V}{\mathcal{V}}
\newcommand{\id}{\mathrm{id}}
\newcommand{\Tot}{\mathrm{Tot}}
\numberwithin{equation}{section}
\begin{document}

\title{Additive polynomial superfunctors and cohomology}

\author{Iacopo Giordano}

\thanks{I would like to thank my PhD supervisor Antoine Touzé for his fundamental support throughout the writing of this paper.
}

\maketitle

\begin{abstract}
	We prove a classification of additive polynomial superfunctors, which allows us to compute some extensions of a superfunctor of the form $F \circ A$ where $F$ is a classical polynomial functor and $A$ is additive. We get a formula which relates these extensions to the classical ones of $F$. A possible generalisation is conjectured at the end.
\end{abstract}



\section{Introduction}

The category $\Pcal$ of strict polynomial functors was introduced by Friedlander and Suslin in \cite{FS}, where they use it to prove the cohomological finite-generation of a finite group scheme. These functors are a powerful tool to perform explicit $\Ext$-computations for polynomial $GL_n$-representations and for modules over classical Schur algebras. Further $\Ext$-computations in $\Pcal$ were performed later by a number of authors, in particular to compute generic cohomology, see e.g. \cite{FFSS,TouzeENS, TouzeUnivSS, Chalupnik} and \cite{TouzeSurvey} for a survey.

In \cite{Axtell}, Axtell introduced a $\Zdz$-graded version of strict polynomial functors, adapted to the context of super representation theory. They are called \emph{strict polynomial superfunctors} and they have already been successfully used  by Drupieski to prove the cohomological finite-generation for finite supergroup schemes \cite{Drupieski}. However, the $\Ext$-computations made so far in the category $\Pbold$ of polynomial superfunctors are not as far-reaching as the classical ones made in $\Pcal$. 
%
%
This article is a contribution to their development.
%
Our main result is a formula which computes $\Ext$ when both arguments are constructed by composing an additive superfunctor and a classical functor.
\begin{thm}[\ref{thm-main}]
	Let $F,A,B$ be homogeneous strict polynomial functors and suppose that $A$ and $B$ are additive. Let $I^{(r)}$ denote the $r$-th Frobenius twist functor. Then there is an isomorphism, natural in all variables
	\begin{equation}\label{eqmain}
	\Ext^*_\Pcal(I^{(r)},F)\otimes \Ext^*_{\Pbold}(A,B)^{(r)} \simeq \Ext_{\Pbold}^*(I^{(r)}\circ A,F\circ B)\;.
	\end{equation}
\end{thm}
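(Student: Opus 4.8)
The plan is to use the classification of additive superfunctors to peel off the additive factors $A$ and $B$ one summand at a time, reducing \eqref{eqmain} to a single, essentially classical, $\Ext$-computation, and then to import the corresponding formula from $\Pcal$.

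\emph{Reduction to elementary factors.} Since $I^{(r)}$ is itself additive, $I^{(r)}\circ(A'\oplus A'')\cong(I^{(r)}\circ A')\oplus(I^{(r)}\circ A'')$, so both sides of \eqref{eqmain} split compatibly over a direct sum decomposition of $A$; by the classification I may therefore assume that $A$ is a single elementary additive superfunctor (a Frobenius twist, up to a parity shift). Additivity of $F$ is unavailable for the variable $B$, but writing $B=\bigoplus_i B_i$ with the $B_i$ elementary and expanding by the exponential formula $F\circ\big(\bigoplus_i B_i\big)\cong\bigoplus_{\underline e}F_{\underline e}(B_\bullet)$, it suffices to check that the ``mixed'' summands $F_{\underline e}(B_\bullet)$ — those in which two distinct $B_i$ occur — receive no extensions from $I^{(r)}\circ A$. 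This is where additivity of $A$ is used: $I^{(r)}\circ A$ is additive, so by the sum–diagonal adjunction $\Ext^*_\Pbold(L,\,G\circ\Delta)\cong\Ext^*(L\circ\oplus,\,G)$ in bi-superfunctors, the group $\Ext^*_\Pbold(I^{(r)}\circ A,F_{\underline e}(B_\bullet))$ is an $\Ext$-group of bi-superfunctors in which the source $(I^{(r)}\circ A)\circ\oplus$ is concentrated in the two pure bidegrees while the target is of mixed bidegree; hence it vanishes. One is thus reduced to the case of single elementary $A$ and $B$.

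\emph{The elementary case.} If $A$ and $B$ are non-isomorphic elementary additive superfunctors, then $\Ext^*_\Pbold(A,B)=0$, and one verifies directly — by comparing degrees and parities after rewriting $F\circ B$ as a Frobenius twist of a classical functor, and invoking the sum–diagonal vanishing above — that $\Ext^*_\Pbold(I^{(r)}\circ A,F\circ B)$ vanishes too. Otherwise $A\cong B$ is a single twist, say of level $s$, so $I^{(r)}\circ A\cong I^{(r+s)}$ and $F\circ B\cong F^{(s)}$ (up to parity), and \eqref{eqmain} becomes
\begin{equation*}
\Ext^*_\Pbold\big(I^{(r+s)},\,F^{(s)}\big)\;\simeq\;\Ext^*_\Pcal(I^{(r)},F)\otimes\Ext^*_\Pbold\big(I^{(s)},I^{(s)}\big)^{(r)}.
\end{equation*}
I would prove this in three moves: (i) write $I^{(r+s)}\cong(I^{(r)})^{(s)}$ and apply a twisting theorem in $\Pbold$, $\Ext^*_\Pbold\big((I^{(r)})^{(s)},F^{(s)}\big)\cong\Ext^*_\Pbold\big(I^{(r)},F_{E_s}\big)$, where $E_s=\Ext^*_\Pbold(I^{(s)},I^{(s)})$ and $F_{E_s}(V)=F(E_s\otimes V)$; (ii) since $F$ is classical and $I^{(r)}$, $F_{E_s}$ are purely even, replace $\Ext^*_\Pbold(I^{(r)},F_{E_s})$ by $\Ext^*_\Pcal(I^{(r)},F_{E_s})$, the inclusion $\Pcal\hookrightarrow\Pbold$ inducing an isomorphism on these $\Ext$-groups (to be established, e.g.\ via compatible injective coresolutions or a parity-block decomposition of $\Pbold$); (iii) apply the classical parametrisation identity $\Ext^*_\Pcal(I^{(r)},F_W)\cong\Ext^*_\Pcal(I^{(r)},F)\otimes W^{(r)}$. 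The composite yields exactly the displayed formula, and tracking cup products of extension classes shows that the isomorphism is the natural one in $F$, $A$ and $B$.

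\emph{Main obstacle.} The crux is step (i): a twisting theorem in $\Pbold$, equivalently the degeneration of the relevant hypercohomology (twisting) spectral sequence. Classically this rests on explicit injective resolutions of Troesch type together with a formality statement; in the super setting one must either adapt that construction — coping with Koszul signs throughout — or find a way to deduce the super statement from the classical one. The comparison of step (ii) and the full naturality assertion are secondary but still require care, whereas the reductions in the first part are essentially bookkeeping.
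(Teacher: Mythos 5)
There is a genuine gap, and it sits at the heart of your ``elementary case''. Your claim that non-isomorphic elementary additive superfunctors have vanishing $\Ext$ between them is false in the super setting: by Drupieski's computation (Proposition \ref{pr-alg}(2) in the paper), $\Ext^*_{\Pbold}(\Ibold^{(s)}_\ell,\Ibold^{(s)}_{\overline{\ell}})$ is a \emph{free rank-one module} over $\Ext^*_{\Pbold}(\Ibold^{(s)}_\ell,\Ibold^{(s)}_\ell)$ generated by an odd class $c_s$ of degree $p^s$, even though $\Ibold^{(s)}_0\not\simeq\Ibold^{(s)}_1$. Correspondingly the right-hand side $\Ext^*_{\Pbold}(I^{(r)}\circ\Ibold^{(s)}_\ell,F\circ\Ibold^{(s)}_{\overline{\ell}})$ does not vanish either (take $F=I^{(r)}$: one gets the free module on $c_{r+s}$). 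These cross-parity cases are precisely two of the four elementary cases the paper must treat separately (Sections \ref{secsigmaprime} and \ref{sectauprime}), and they cannot be dismissed; your reduction therefore misses a nonzero part of both sides of \eqref{eqmain}.

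The second problem is your step (i): a general twisting theorem in $\Pbold$ is not available --- it is essentially the content of the paper's concluding Conjecture \ref{conj}, i.e.\ an open problem, as you yourself suspect in your ``main obstacle'' paragraph. The paper does not need it, because the source $I^{(r)}\circ A$ is additive: one first treats $F=S^\mu$ an injective cogenerator, where Pirashvili's vanishing kills all $\mu\neq(p^r)$ and the case $\mu=(p^r)$ is exactly Drupieski's computation of $\Ext^*_{\Pbold}(\Ibold^{(t)}_\ell,(S^{p^s})^{(t-s)}_\ell)$ (Proposition \ref{pr-alg}(3)); one then passes to arbitrary $F$ by a hypercohomology spectral sequence built from an injective coresolution of $F$, with the map realised at the level of bicomplexes by Yoneda products with explicit cocycles. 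Your reduction bookkeeping (splitting $A$ and $B$ into elementary summands and killing mixed terms by Pirashvili-type vanishing) matches the paper's strategy, but the core of the argument needs to be replaced: handle the four parity combinations of $(A,B)$ via Drupieski's classes $e^i_t$ and $c_t$, and substitute the injective-cogenerator plus spectral-sequence argument for the unproven twisting theorem. Your step (ii), comparing $\Ext_{\Pcal}$ and $\Ext_{\Pbold}$ on purely even functors, is also not a formality one can wave at; the paper avoids it by working with $\sigma$, $\sigma'$, $\tau$, $\tau'$ entirely inside $\Pbold$.
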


Our result relies mainly on two ingredients. In first place, we prove a classification of the additive polynomial superfunctors, showing that they are all direct sums of four fundamental types. This makes our computations easier, but more importantly it allows to actually give sense to the composition $F\circ A$ of a classical functor $F$ by an additive superfunctor $A$ (see Corollary \ref{precompaddfunctors}). It is also a result of independent interest. For example, the study of exponential functors \cite{TouzeIMRN} has shown that the classification of additive functors is a key in understanding exponential functors, and similar results are expected to hold in the super setting. 

Secondly, our $\Ext$-computations use in an essential way the ones made by Drupieski in \cite{Drupieski}. 

In the last section of the paper we rewrite Theorem 1.1 in a different way, which makes it suggestively similar to the classical formula computing general extensions $\Ext^*_{\Pcal}(G\circ I^{(r)}, F\circ I^{(r)})$, see \cite[Cor. 5]{TouzeUnivSS}, \cite[Cor. 3.7]{Chalupnik} or \cite{TouzeSurvey}.
This makes us think that the extensions of the form $\Ext^*_{\Pbold}(G\circ A, F\circ B)$ may be computed by an analogous formula, which we state in Conjecture \ref{conj}.
%
%
%
%
In order to prove the general case of the conjecture, we hope to follow the same idea as in \cite[Section 5]{TouzeSurvey}. This is to say, we aim to search some super analogues of the universal classes constructed in \cite{TVdK} and use them to produce the isomorphism of the conjecture by means of cup products, in a similar way as we have defined the isomorphism of Theorem 1.1. The construction of the universal classes made by Touzé relied on the use of Troesch $p$-complexes, introduced in \cite{Troesch}. To find these super classes one should then investigate the \say{superized} Troesch complex recently constructed by Drupieski and Kujawa \cite{DrupKujawa}.


\section{Recollections of strict polynomial (super)functors}

\subsection{Categories of strict polynomial (super)functors} We briefly recall the notations and the definition of a strict polynomial functor \cite{FS}. From now on, $k$ is a field of positive characteristic $p \ge 3$. In what follows all algebraic structures will be on $k$ (if not specified otherwise) so that we drop $k$ everywhere.

Denote by $\V$ the category of finite dimensional vector spaces and linear morphisms. We define a new category starting from $\V$. Let the symmetric group $\Sigma_n$ act on the tensor product $V^{\otimes n}$ by permutation, then $\Gamma^n V$ (resp. $S^n V$) is the vector space of the invariants (resp. coinvariants) of this action. We set $\Gamma^n \V$ as the category with the same objects as $\V$ but with morphisms
\[ \Hom_{\Gamma^n \V} (V,W) := \Gamma^n \Hom(V,W) \: . \] 
Composition is defined in a natural way via the isomorphism $\Gamma^n \Hom(V,W) \simeq  \Hom_{k \Sigma_n}(V^{\otimes n}, W^{\otimes n})$. Both $\V$ and $\Gamma^n \V$ are clearly enriched over $\V$.

\begin{definition}
	A homogeneous \emph{strict polynomial functor} of degree $n$ is a linear (covariant) functor $F : \Gamma^n \V \rightarrow \V$. The category with the latter as objects and with natural transformations between them is noted by $\mathcal{P}_n$. Finally, the category of arbitrary strict polynomial functors is $\mathcal{P} := \prod_{n \ge 0} \mathcal{P}_n$.
\end{definition}

\begin{ex}
	Let $F$ be one of the symbols $S^n, \Gamma^n, \Lambda^n$. Any $\Sigma_n$-equivariant map $V^{\otimes n} \rightarrow W^{\otimes n}$ descends to a well-defined map $F(V) \rightarrow F(W)$. Then the isomorphism mentioned above $\Gamma^n \Hom(V,W) \simeq \Hom_{k S_n}(V^{\otimes n}, W^{\otimes n})$   induces a linear morphism
	\[\Gamma^n \Hom(V,W) \rightarrow \Hom(F(V), F(W)) \]
	which makes $F$ into a strict polynomial functor of degree $n$.
\end{ex}

We now generalise this definition in a graded sense. A \emph{super vector space} is a $\Zdz$-graded vector space, namely of the form
\[V = V_0 \oplus V_1 \]
where $V_0$ is said to be its \emph{even} part, while $V_1$ the \emph{odd} part. If $v \in V$ is a homogeneous vector, we denote by $\overline{v} \in \Zdz$ its super-degree. The \emph{superdimension} of $V$ is $sdim(V) := (dim V_0, \: dim V_1)$. Any super vector space of superdimension $(m,n)$ identifies with $k^{m|n} := k^m \oplus k^n$. A linear map between two super vector spaces $f:V\rightarrow W$ is said \emph{even} (resp. \emph{odd}) if it preserves (resp. swaps) the even and odd parts of $V$ and $W$. Every linear map decomposes uniquely as the sum of an even and an odd map.

Let $\mathfrak{svec}$ be the category of super vector spaces and linear morphisms, and let $\Vbold$ be the full subcategory of finite dimensional objects. The last remark implies that $\mathfrak{svec}$ and $\Vbold$ are both enriched over themselves (or \emph{superlinear}). 

Given $V \in \Vbold$, there is a right action of $\Sigma_n$ on $V^{\otimes n}$ determined by its formula on transpositions $(i, i+1)$:
\[ (v_1 \otimes \dots \otimes v_n) \cdot (i, i+1) := (-1)^{\overline{v_i} \:\overline{v_{i+1}}}(v_1 \otimes \dots \otimes v_{i+1} \otimes v_i \otimes \dots \otimes v_n) \]
Set $\Gammabold^n V$ and $\Sbold^n V$ to be respectively the invariants and coinvariants of $V^{\otimes n}$ under this action. The $\Zdz$-graduation on them is induced by the isomorphisms 
\begin{align*}
\Sbold^n(V) \simeq \bigoplus_{a+b=n} S^a(V_0) \otimes \Lambda^b(V_1) \\
\Gammabold^n(V) \simeq \bigoplus_{a+b=n} \Gamma^a(V_0) \otimes \Lambda^b(V_1)
\end{align*}
where in both formulas $V_i$ is placed in degree $i$.
We define the category $\Gammabold^n \Vbold$ which has the same objects as $\Vbold$ and morphisms
\[ \Hom_{\Gammabold^n \Vbold} (V,W) := \Gammabold^n \Hom(V,W) \: .\] 
This implies that it is a superlinear category too.

\begin{definition}
	A homogeneous \emph{strict polynomial superfunctor} of degree $n$ is an even superlinear functor $F: \Gammabold^n \Vbold \rightarrow \Vbold$, i.e. a covariant functor such that the map $\Gammabold^n \Hom(V,W) \rightarrow \Hom(F(V), F(W))$ is linear and even for any $V,W \in \Vbold$.
\end{definition}

\begin{ex}
	$\Sbold^n$ and $\Gammabold^n$ are strict polynomial superfunctors of degree $n$. 
\end{ex}

There is an important example of degree $1$, other than the identity $\Ibold$. For each $V$, let $\PF(V)$ be the super vector space with exchanged parities, i.e. having $V_1$ as even part and $V_0$ as odd part. If $\varphi:V \rightarrow W$ is a linear map, let $\PF(\varphi) = (-1)^{\overline{\varphi}} \varphi : \PF(V) \rightarrow \PF(W)$. Then $\PF$ defines a strict polynomial superfunctor of degree $1$. 

If $F,G$ are polynomial of degree $n$, a \emph{natural transformation} $f: F \rightarrow G$ is a collection of linear maps $\{f_V : F(V) \rightarrow G(V), \; V \in \Vbold\}$ such that for any $\varphi \in \Gammabold^n \Hom(V,W)$ 
\[ f_W \circ F(\varphi) = (-1)^{\overline{\varphi} \: \overline{f}} G(\varphi) \circ f_V \]
\begin{definition}\label{Pboldcategory}
	For each $n\geq0$, $\Pbold_n$ denotes the category of homogeneous polynomial superfunctors of degree $n$ and natural transformations. We set $\Pbold := \prod_{n \ge 0} \Pbold_n$ to be the category of all polynomial superfunctors.
\end{definition}

We will always consider only homogeneous polynomial (super)functors, since all the assertions on non-homogeneous ones can be recovered from them by Definition \ref{Pboldcategory}. In particular, if $F,G$ are homogeneous, $\Hom_{\Pbold}(F,G)$ can be nonzero only if they have the same degree $d$, in which case $\Hom_{\Pbold}(F,G)$ is equal to $\Hom_{\Pbold_d}(F,G)$. We will then often write $\Hom_{\Pbold}(-,-)$ instead of $\Hom_{\Pbold_d}(-,-)$ to ease notation. The same things can be stated for $\mathcal{P}$. Moreover, when there is no possibility of confusion with other kinds of functors, we will use the bare term \emph{superfunctor} (resp. \emph{functor}) to denote an element of $\Pbold$ (resp. $\mathcal{P}$).

Let $F \in \Pbold_n, G \in \Pbold_m$. The tensor product $F \otimes G$, defined by $V \mapsto F(V)\otimes G(V)$, is polynomial of degree $n+m$. The composed functor $F \circ G$, of degree $nm$, is defined on objects by $(F \circ G)(V) = F(G(V))$ and on morphisms by
\[
\begin{tikzcd}
\Gammabold^{nm}\Hom(V,W) \arrow[r] & \Gammabold^n(\Gammabold^m \Hom(V,W)) \arrow[r, "\Gammabold^n G"] & \Gammabold^n \Hom(G(V),G(W)) \arrow[d, "F"] \\
& & \Hom(F(G(V)), F(G(W)))  
\end{tikzcd}
\]
where the first map is induced by the inclusion $\Sigma_m^{\times n} \subset \Sigma_{nm}$. We give a last example of standard construction: 

\begin{definition}
	Let $F \in \Pbold_n$. The \emph{Kuhn dual} of $F$ is the functor $F^\# \in \Pbold_n$ defined by $F^\#(V) := (F(V^*))^*$.
\end{definition}

\begin{rem}\label{kuhnduals}
	For example, $\Ibold$ and $\PF$ are self-dual, while $\Sbold^n$ and $\Gammabold^n$ are the dual of each other.
\end{rem}

\begin{rem}\label{kuhnduality}
	Kuhn duality induces an isomorphism, natural in either variable
	\[ \Hom_{\Pbold}(F,G) \simeq \Hom_{\Pbold}(G^\#, F^\#) \]
\end{rem}

\subsection{Homological algebra in $\Pbold$}
The category $\Pbold$ is not abelian\footnote{The reason is that there is no natural $\Zdz$-grading on the (co)kernel of a non-homogeneous map.}, so we must specify what we mean by taking extensions in $\Pbold$.

For any superlinear category $\mathcal{C}$, let $\mathcal{C}_{ev}$ denote the category with the same objects as $\mathcal{C}$ but only even morphisms (in particular the $\Hom_{\mathcal{C}_{ev}}(-,-)$ are purely even spaces). 
Then $\Pbold_{ev}$ is an abelian category, where kernels and cokernels are computed espace-wise. Now, the parity change functor $\PF$ defined in the previous section induces an isomorphism of super vector spaces, natural in $F,G$ with respect to even transformations
\begin{equation}\label{homparitychange}
\Hom_{\Pbold}(F,G) \simeq \Hom_{\Pbold_{ev}}(F,G) \oplus  \Hom_{\Pbold_{ev}}(\PF \circ F,G)
\end{equation}
where the left (right) summand is placed in even (odd) degree.
The starting point for cohomological computations is the Yoneda lemma. Given $V \in \Vbold$, consider the superfunctor $\Gammabold^{d,V} := \Gammabold^d \: \Hom(V,-)$.

\begin{thm}\label{yoneda}
For any $F \in \Pbold_d$ there is an isomorphism, natural in $V$ and in $F$:
\[ \Hom_{\Pbold_d}(\Gammabold^{d,V}, F) \simeq F(V)  \]
\end{thm}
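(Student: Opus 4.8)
The plan is to establish the super-analogue of the Yoneda lemma for strict polynomial superfunctors in essentially the same way as in the classical case for $\Pcal$, adapted to keep track of parities. First I would recall that by definition a homogeneous superfunctor $F\in\Pbold_d$ is an even superlinear functor on $\Gammabold^d\Vbold$, so evaluation at the object $V$ gives a linear map
\[
\ev_V:\Hom_{\Pbold_d}(\Gammabold^{d,V},F)\longrightarrow \Hom_{\Vbold}\bigl(\Gammabold^{d,V}(V),F(V)\bigr),\qquad f\mapsto f_V.
\]
Since $\Gammabold^{d,V}(V)=\Gammabold^d\Hom(V,V)$ contains the canonical element corresponding to $\id_V$ (the image of $\id_V^{\otimes d}$ under the invariants inclusion, which is even), composing $\ev_V$ with evaluation of $f_V$ at this canonical element produces the candidate natural transformation
\[
\Phi:\Hom_{\Pbold_d}(\Gammabold^{d,V},F)\longrightarrow F(V),\qquad \Phi(f):=f_V\bigl(\Gammabold^d(\id_V)\bigr).
\]
I would check that $\Phi$ is superlinear and then exhibit an explicit inverse.

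The inverse is built exactly as in the classical proof: given $x\in F(V)$, one must produce a natural transformation $\Psi(x):\Gammabold^{d,V}\to F$. For each $W\in\Vbold$ the component $\Psi(x)_W:\Gammabold^d\Hom(V,W)\to F(W)$ is defined by sending $\varphi\in\Gammabold^d\Hom(V,W)$ to $F(\varphi)(x)$, where $F(\varphi)$ makes sense because $\Gammabold^d\Hom(V,W)=\Hom_{\Gammabold^d\Vbold}(V,W)$ and $F$ is a functor on $\Gammabold^d\Vbold$. Here the key point to verify is that this $\Psi(x)$ genuinely satisfies the super-naturality condition $g_{W'}\circ\Gammabold^{d,V}(\psi)=(-1)^{\overline{\psi}\,\overline{g}}\,F(\psi)\circ g_W$ for $g=\Psi(x)$; since $F$ is an \emph{even} functor, $\overline{F(\psi)}=\overline{\psi}$, and $\Gammabold^{d,V}(\psi)$ is post-composition by $\psi$, so the sign $(-1)^{\overline{\psi}\,\overline{g}}$ with $\overline{g}=\overline{x}$ comes out of moving $\psi$ past the fixed element $x\in F(V)$ inside $F(\psi\circ-)=F(\psi)\circ F(-)$. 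One also checks $\Psi(x)$ is itself homogeneous of degree $\overline{x}$, so that $\Psi$ is superlinear.

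It then remains to verify that $\Phi$ and $\Psi$ are mutually inverse. The composite $\Phi\circ\Psi$ sends $x$ to $\Psi(x)_V(\Gammabold^d(\id_V))=F(\id_V)(x)=x$ since $F$ preserves identities. Conversely, for a natural transformation $f$, one shows $\Psi(\Phi(f))=f$ by evaluating at arbitrary $W$ and $\varphi\in\Gammabold^d\Hom(V,W)$: naturality of $f$ applied to the morphism $\varphi$ (viewed as a morphism $V\to W$ in $\Gammabold^d\Vbold$, which can be written as $\varphi\circ\id_V$) gives $f_W(\Gammabold^{d,V}(\varphi)(\Gammabold^d\id_V))=\pm F(\varphi)(f_V(\Gammabold^d\id_V))$, and tracking that $\Gammabold^{d,V}(\varphi)$ applied to the canonical element $\Gammabold^d(\id_V)$ yields exactly $\varphi$ recovers $f_W(\varphi)=\pm F(\varphi)(\Phi(f))=\pm\Psi(\Phi(f))_W(\varphi)$; the sign is absorbed into the definition of naturality so equality holds on the nose. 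Naturality in $V$ and $F$ is then routine. The main obstacle I anticipate is purely bookkeeping: making sure every identification $\Gammabold^d\Hom(V,W)\simeq\Hom_{k\Sigma_d}(V^{\otimes d},W^{\otimes d})$ is compatible with the Koszul signs in the braiding on $\Vbold$, and that the ``canonical element'' $\Gammabold^d(\id_V)$ behaves correctly under these identifications — in particular that it is even and that $\Gammabold^{d,V}(\varphi)$ sends it to $\varphi$ — so that no stray sign corrupts the bijectivity of $\Phi$.
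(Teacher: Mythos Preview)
The paper does not actually prove this theorem: it appears in the ``Recollections'' section and is stated without proof as a standard fact (the enriched Yoneda lemma for $\Gammabold^d\Vbold$, as used e.g.\ in Drupieski's work). Your approach---evaluation at the canonical element $\Gammabold^d(\id_V)$ for $\Phi$, and $\Psi(x)_W(\varphi)=F(\varphi)(x)$ for the inverse---is exactly the standard argument and is correct in outline.

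One sign detail deserves sharpening. With the paper's super-naturality convention
\[
g_{W'}\circ \Gammabold^{d,V}(\psi)=(-1)^{\overline{\psi}\,\overline{g}}\,F(\psi)\circ g_W,
\]
your formula $\Psi(x)_W(\varphi)=F(\varphi)(x)$ does \emph{not} satisfy this as written: since $F$ is even one has $F(\psi\circ\varphi)=F(\psi)\circ F(\varphi)$ with no sign, so your justification that the sign ``comes out of moving $\psi$ past $x$ inside $F(\psi\circ -)=F(\psi)\circ F(-)$'' does not produce any sign. The fix is to put the Koszul sign into the definition itself,
\[
\Psi(x)_W(\varphi):=(-1)^{\overline{\varphi}\,\overline{x}}\,F(\varphi)(x),
\]
after which the naturality check and the identities $\Phi\circ\Psi=\id$, $\Psi\circ\Phi=\id$ go through cleanly (the canonical element $\Gammabold^d(\id_V)$ is even, so $\Phi\circ\Psi=\id$ is unaffected). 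This is precisely the bookkeeping you flagged; otherwise the proposal is fine.
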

In particular, by (\ref{homparitychange}), $\Gammabold^{d,V} \oplus (\PF \circ \Gammabold^{d,V})$ is a projective object in $(\Pbold_d)_{ev}$. In general, Theorem \ref{yoneda} and (\ref{homparitychange}) imply that a natural transformation
\[ T: \Gammabold^{d,V} \oplus \PF \circ \Gammabold^{d,V} \rightarrow F \]
in $\Pbold_{ev}$ is completely determined by a vector $v = v_0 + v_1 \in F(V)$, explicitly by the formula $T(f + \pi g) = Ff(v_0) + Fg(v_1)$. Moreover, the \emph{Yoneda morphism} (of which we keep track for later use):
\begin{align}\label{explicityonedabismorphism}
\begin{split}
(\Gammabold^{d,V} \oplus \PF \circ \Gammabold^{d,V}) \otimes F(V) \longrightarrow F \\
(f + \pi g) \otimes v \longmapsto Ff(v_0) + Fg(v_1)
\end{split}
\end{align}
induces an epimorphism
\[ \bigoplus_{m,n \ge 1} (\Gammabold^{d,k^{m|n}} \oplus \PF \circ \Gammabold^{d,k^{m|n}}) \otimes F(k^{m|n}) \longrightarrow F  \]
so that the $\Gammabold^{d,V} \oplus (\PF \circ \Gammabold^{d,V})$ form a set of projective generators of $({\Pbold_d})_{ev}$ when $V$ runs through $\Vbold$ \cite[Thm. 3.1.1]{Drupieski}. Thus, $\Pbold_{ev}$ is an abelian category with enough projectives and one can compute extensions in it. Extending (\ref{homparitychange}), we define the \say{extensions in $\Pbold$} as
\[ \Ext^*_{\Pbold}(F,G) \simeq \Ext^*_{\Pbold_{ev}}(F,G) \oplus  \Ext^*_{\Pbold_{ev}}(\PF \circ F,G) \]
with the left (right) summand placed in even (odd) degree.



\section{Classification of additive superfunctors}

\subsection{Additive superfunctors}

Let $F$ be a polynomial superfunctor. Given super vector spaces $V,W$, the inclusions $V, W \subset V\oplus W$ induce by funtoriality inclusions $F(V), F(W) \subset F(V\oplus W)$.  

\begin{definition}
	A superfunctor $F$ is \emph{additive} if the inclusions induce an isomorphism $F(V\oplus W) \simeq F(V) \oplus F(W)$.
\end{definition} 

Two basic examples are $\Ibold$ and $\PF$. Other fundamental examples are provided by Frobenius twists. To be more specific, for a super vector space $V$ and a positive integer $r$, set $V^{(r)} := k \:\otimes_{\varphi} V$, where $\varphi : k \rightarrow k$ is the $p^r$-th power map. If $V=V_0 \oplus V_1$ is the decomposition of $V$ in even and odd part, set $V_0^{(r)}$ to be the even part and $V_1^{(r)}$ to be the odd part of $V^{(r)}$. 

\begin{definition}
	The super vector space $V^{(r)}$ is called the $r$-th \emph{Frobenius twist} of $V$. 
	We use the notation $v^{(r)} := 1 \otimes v$ to indicate its elements. 
\end{definition}

Set $\Ibold^{(r)}(V) = V^{(r)}$. We want to make it into a polynomial superfunctor. To do that,  consider the linear inclusion $i: V^{(r)} \hookrightarrow \Sbold^{p^r}(V), \; \; v^{(r)} \mapsto v^{p^r}$ which induces by duality a surjection $i^\# : \Gammabold^{p^r}(V) \rightarrow V^{(r)}$. In fact, if $v=~v_0~+v_1$ is the decomposition of $v$ in even and odd components, then $v^{p^r} = (v_0)^{p^r}$, so that the image of $i$ is contained in the subspace $\Sbold^{p^r}(V_0)$. Dually, this means that $i^\#$ vanishes anywhere but on $\Gammabold^{p^r}(V_0)$, in particular its image is contained in $V_0^{(r)}$. 
We make use of this observation to define $\Ibold^{(r)}$ on morphisms. If $f:V\rightarrow W$ is a homogeneous linear morphism, then $f^{(r)}: V^{(r)} \longrightarrow W^{(r)}$ defined by $f^{(r)}(v^{(r)}):= (f(v))^{(r)}$ is linear of the same parity, and any linear morphism $V^{(r)} \rightarrow W^{(r)}$ is built like that. This defines an even isomorphism $\Hom(V^{(r)}, W^{(r)}) \simeq (\Hom(V,W))^{(r)}$. Composing it with $i^\#$ gives an even linear map
\begin{equation}\label{aaa}
 \Gammabold^{p^r}(Hom(V,W)) \xrightarrow{i^\#} (\Hom(V,W)_0)^{(r)} \simeq \Hom(V^{(r)}, W^{(r)})_0  
\end{equation}
which endows $\Ibold^{(r)}$ with the structure of a polynomial superfunctor of degree $p^r$. Moreover, the fact that (\ref{aaa}) has image in the space of even morphisms allows to define subfunctors
\[
\begin{array}{cccc}
\Ibold_0^{(r)}(V) = V_0^{(r)}, & & & \Ibold_1^{(r)}(V) = V_1^{(r)},
\end{array}
\]
such that $\Ibold^{(r)} = \Ibold_0^{(r)} \oplus \Ibold_1^{(r)}$. The following lemma is an easy verification.

\begin{lm}\label{rem}
	$\Ibold_0^{(r)}, \Ibold_1^{(r)}$ and $\Ibold^{(r)}$ are additive. Moreover, there are isomorphisms
	\[ 
	\begin{array}{cccc}
	\PF \circ \Ibold_0^{(r)} \simeq \Ibold_1^{(r)} \circ \PF \: , & & &
	\PF \circ \Ibold_1^{(r)} \simeq \Ibold_0^{(r)} \circ \PF \:.
	\end{array}
	\]
\end{lm}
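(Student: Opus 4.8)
The plan is to handle the two assertions in turn, each by unwinding the definitions.

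\textbf{Additivity.} The classical Frobenius twist $V\mapsto V^{(r)}$ on ordinary vector spaces is base change along the ring map $\varphi\colon k\to k$, hence commutes with finite direct sums, and the comparison isomorphism $(V\oplus W)^{(r)}\simeq V^{(r)}\oplus W^{(r)}$ is exactly the one induced by the two inclusions. For super vector spaces one has $(V\oplus W)_\epsilon=V_\epsilon\oplus W_\epsilon$ for each $\epsilon\in\Zdz$, so this isomorphism respects the super-grading; restricting it to even, resp. odd, parts yields the additivity of $\Ibold_0^{(r)}$ and $\Ibold_1^{(r)}$, and the isomorphism of total spaces yields that of $\Ibold^{(r)}$ --- in each case the one induced by the inclusions.

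\textbf{The commutation isomorphisms.} It is enough to produce an even natural isomorphism $\PF\circ\Ibold^{(r)}\simeq\Ibold^{(r)}\circ\PF$: since $\PF$ preserves direct-sum decompositions, both sides split, as $(\PF\circ\Ibold_0^{(r)})\oplus(\PF\circ\Ibold_1^{(r)})$ on the left and $(\Ibold_0^{(r)}\circ\PF)\oplus(\Ibold_1^{(r)}\circ\PF)$ on the right, and an even isomorphism carries even (resp. odd) parts to even (resp. odd) parts; since one readily checks that $\PF\circ\Ibold_0^{(r)}$ and $\Ibold_1^{(r)}\circ\PF$ are both concentrated in odd degree while the other two composites are concentrated in even degree, matching the two decompositions gives both stated isomorphisms. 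On objects the isomorphism is immediate: both $\PF(V^{(r)})$ and $(\PF V)^{(r)}$ have underlying super vector space $V_1^{(r)}\oplus V_0^{(r)}$ with $V_1^{(r)}$ even and $V_0^{(r)}$ odd, so I would take the identity of these spaces, for each $V$, as the candidate. Naturality is then the assertion that the structure maps $\Gammabold^{p^r}\Hom(V,W)\to\Hom(-,-)$ of the two composites agree. The key point is that the structure map of $\Ibold^{(r)}$ factors through the surjection $i^\#$ of (\ref{aaa}), which vanishes on everything but the summand $\Gamma^{p^r}(\Hom(V,W)_0)$ of $\Gammabold^{p^r}\Hom(V,W)$; on that summand the endomorphism $\Gammabold^{p^r}(\PF)$ induced by $\PF$ on morphisms is the identity, because $\PF$ fixes every even linear map. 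Likewise every value $\Ibold^{(r)}(\varphi)$ is an even map, so the outer $\PF$ contributes no sign either. Feeding these facts into the composition formula for superfunctors, together with the identification $\Hom(\PF V,\PF W)_0=\Hom(V,W)_0$ of underlying spaces, shows that the two structure maps coincide.

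I expect the only fiddly part to be this last bookkeeping: one has to track the various relabelings of super-gradings --- in particular the block decomposition of an even map $(\PF V)^{(r)}\to(\PF W)^{(r)}$ into its $(V_0)^{(r)}\to(W_0)^{(r)}$ and $(V_1)^{(r)}\to(W_1)^{(r)}$ components --- through the definition of $F\circ G$. The potential sign $(-1)^{p^r}=-1$ arising from $\Gammabold^{p^r}(\PF)$ on its odd summands never intervenes, precisely because $i^\#$ discards those summands; this is what makes the lemma an easy verification.
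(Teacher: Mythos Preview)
Your argument is correct. The paper does not actually prove this lemma: it simply asserts that it ``is an easy verification'' and moves on. Your write-up supplies precisely the verification the paper omits, and the mechanism you identify---that the structure map of $\Ibold^{(r)}$ factors through $i^\#$, which kills everything outside $\Gamma^{p^r}(\Hom(V,W)_0)$, so that the sign in $\PF(\varphi)=(-1)^{\overline{\varphi}}\varphi$ never contributes---is exactly the reason the check is routine.
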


Additive superfunctors have an important homological property, often referred to as Pirashvili's vanishing lemma, since it was first proved \cite{Pirashvili} by Pirashvili in the context of non-strict functors.

\begin{lm}\emph{\cite{Drupieski,FS}}\label{pirashvili}
	Let $\mathcal{A} := \Pcal$ or $\Pbold$.
	Let $A,F,G \in \mathcal{A}$ be such that $A$ is additive and $F,G$ are reduced, i.e. $F(0)=0=G(0)$. Then 
	\[\Ext_{\mathcal{A}}^*(A,F\otimes G) = \Ext_{\mathcal{A}}^*(F\otimes G, A) = 0  \]
\end{lm}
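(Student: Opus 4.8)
The plan is to prove the statement by the classical cross-effect argument, uniformly for $\mathcal{A}=\Pcal$ and --- after passing to the abelian category $\Pbold_{ev}$ --- for $\mathcal{A}=\Pbold$. First I would cut the statement down. The Kuhn dual of an additive (super)functor is again additive, that of a reduced one is again reduced, and $(F\otimes G)^{\#}\simeq F^{\#}\otimes G^{\#}$; hence the $\Ext$-analogue of Remark~\ref{kuhnduality} (a formal consequence of $(-)^{\#}$ being an exact anti-autoequivalence) gives $\Ext^{*}_{\mathcal{A}}(A,F\otimes G)\simeq\Ext^{*}_{\mathcal{A}}((F\otimes G)^{\#},A^{\#})$, so it suffices to prove $\Ext^{*}_{\mathcal{A}}(F\otimes G,A)=0$. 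For $\mathcal{A}=\Pbold$, since $\PF\circ F$ is again reduced and $\PF$ passes through the tensor product, the definition of $\Ext^{*}_{\Pbold}$ reduces this further to $\Ext^{*}_{\Pbold_{ev}}(F\otimes G,A)=0$ for $F,G$ reduced and $A$ additive.

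Next I would build a projective resolution of $F\otimes G$ out of separate resolutions of $F$ and $G$. Put $a=\deg F\ge 1$ and $b=\deg G\ge 1$, and take projective resolutions $R^{\bullet}_{F}\to F$ and $R^{\bullet}_{G}\to G$ whose terms are direct sums of the projective generators $\Gammabold^{a,V}\oplus\PF\circ\Gammabold^{a,V}$, resp. $\Gammabold^{b,W}\oplus\PF\circ\Gammabold^{b,W}$, provided by Theorem~\ref{yoneda} and the discussion following it. Since the pointwise tensor product over $k$ is exact in each variable and kernels and cokernels in $\Pbold_{ev}$ are computed espace-wise, the Künneth theorem shows that $\Tot(R^{\bullet}_{F}\otimes R^{\bullet}_{G})\to F\otimes G$ is a resolution. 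Its terms are still projective: from the isomorphism $\Gammabold^{n}(X)\simeq\bigoplus_{i+j=n}\Gamma^{i}(X_0)\otimes\Lambda^{j}(X_1)$ and the classical exponential properties of $\Gamma$ and $\Lambda$ one gets $\Gammabold^{n}(X\oplus Y)\simeq\bigoplus_{i+j=n}\Gammabold^{i}(X)\otimes\Gammabold^{j}(Y)$, whence $\Gammabold^{a+b,V_1\oplus V_2}\simeq\bigoplus_{i+j=a+b}\Gammabold^{i,V_1}\otimes\Gammabold^{j,V_2}$ (take $X=\Hom(V_1,-)$, $Y=\Hom(V_2,-)$); together with $\PF\circ(U\otimes W)\simeq U\otimes(\PF\circ W)$ and $\PF\circ\PF\simeq\id$ this exhibits every term of $\Tot(R^{\bullet}_{F}\otimes R^{\bullet}_{G})$ as a direct summand of a finite sum of copies of $\Gammabold^{a+b,V_1\oplus V_2}\oplus\PF\circ\Gammabold^{a+b,V_1\oplus V_2}$.

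The heart of the argument is then the vanishing of $\Hom_{\Pbold_{ev}}(T,A)$ for every term $T$ of this resolution, i.e. for $T$ a summand of $\Gammabold^{a,V_1}\otimes\Gammabold^{b,V_2}$ or of $\PF\circ(\Gammabold^{a,V_1}\otimes\Gammabold^{b,V_2})$ with $a,b\ge1$. Using $\Hom_{\Pbold_{ev}}(\Gammabold^{d,V}\oplus\PF\circ\Gammabold^{d,V},A)\simeq A(V)$ (Theorem~\ref{yoneda} and (\ref{homparitychange})) and the exponential decomposition above with $V=V_1\oplus V_2$, one recognises the resulting decomposition of $A(V_1\oplus V_2)$ as its decomposition into bidegree-$(i,j)$ cross-effects, so that the summand attached to the pair $(a,b)$ is the $(a,b)$-cross-effect of $A$. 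Since $A$ is additive, $A(V_1\oplus V_2)=A(V_1)\oplus A(V_2)$, so every cross-effect of bidegree $(i,j)$ with $i,j\ge1$ vanishes --- in particular the one indexed by $(a,b)$. Hence $\Hom_{\Pbold_{ev}}(\Tot(R^{\bullet}_{F}\otimes R^{\bullet}_{G}),A)$ is the zero complex, $\Ext^{*}_{\Pbold_{ev}}(F\otimes G,A)=0$, and the reductions of the first paragraph finish the proof. The case $\mathcal{A}=\Pcal$ is identical, with $\Gamma^{d,V}$ in place of $\Gammabold^{d,V}\oplus\PF\circ\Gammabold^{d,V}$ and no parity bookkeeping.

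I do not expect a genuine obstacle: all the conceptual weight sits in the cross-effect vanishing --- the one place additivity is used --- while the only point requiring care is the super bookkeeping, namely the compatibility of $\PF$ with the exponential decomposition, needed both to keep tensor products of projective generators projective and to identify the relevant $\Hom$ spaces with cross-effects. This is the route taken in \cite{FS} for $\Pcal$ and in \cite{Drupieski} for $\Pbold$.
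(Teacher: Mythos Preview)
Your argument is correct and is precisely the classical cross-effect argument of \cite{FS} (for $\Pcal$) and \cite{Drupieski} (for $\Pbold$); the paper itself gives no proof and simply cites these references, so your proposal coincides with what the paper invokes.
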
 %

\subsection{The classification and some consequences}\label{sec-31}

The next theorem states that the examples of additive superfunctors given in Section 3.1 are essentially the only ones:

\begin{thm}\label{classaddfunc}
Let $A$ be an additive homogeneous superfunctor of degree $d$. Then there is an $r\ge 0$ such that $d=p^r$. Moreover:
\begin{enumerate}
\item if $r=0$ then $A$ is a direct sum of copies of the functors $\Ibold$ and $ \PF$,
\item if $r>0$ then $A$ is a direct sum of copies of the functors $\Ibold^{(r)}_0$, $ \Ibold^{(r)}_1$, $\Ibold^{(r)}_0\circ \PF$ and $\Ibold^{(r)}_1\circ \PF$.
\end{enumerate}
\end{thm}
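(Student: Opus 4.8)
The plan is to reduce everything to the classification of additive \emph{classical} functors (which, by Friedlander--Suslin, says a homogeneous additive functor of degree $d$ forces $d = p^r$ and the functor is a sum of copies of $I^{(r)}$), by exploiting the $\Zdz$-grading splitting \eqref{homparitychange} and the decomposition $\Sbold^n(V) \simeq \bigoplus S^a(V_0)\otimes\Lambda^b(V_1)$ together with its $\Gammabold$-analogue. First I would observe that an additive superfunctor $A$ is in particular reduced, and by Pirashvili's vanishing lemma (Lemma \ref{pirashvili}) it has no maps to or from any nontrivial tensor product of reduced functors; this is the main structural leverage. Concretely, restricting $A$ to purely even input $V = V_0$ and to purely odd input $V = V_1$ should, via additivity and the classical classification, pin down the degree as $d = p^r$ and determine the ``leading'' behaviour of $A$ on homogeneous super vector spaces.

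The key steps, in order, are: (i) Use additivity to reduce to understanding $A$ on $k^{1|0}$ and $k^{0|1}$, since $A(V)$ is functorially a direct sum of copies of $A(k^{1|0})$ and $A(k^{0|1})$ indexed by a basis adapted to the grading; track how $GL(k^{1|0})\times GL(k^{0|1})$-equivariance and the odd morphisms between $k^{1|0}$ and $k^{0|1}$ glue these pieces. (ii) Analyse the sub-superfunctor $A_{ev}$ of $A$ consisting of the part landing in even degree: by \eqref{homparitychange} this is a genuine object of $\Pbold_{ev}$, and restricting along the inclusion $\V \hookrightarrow \Vbold$, $V_0 \mapsto V_0$, yields a classical additive functor, hence a sum of copies of $I^{(r)}$; deduce $d=p^r$ and that $A_{ev}$ restricted to even spaces is $(\Ibold_0^{(r)})^{\oplus a}$. (iii) Do the same with purely odd inputs, using the isomorphism $\Gammabold^{p^r}(V_1) \simeq \Gamma^{p^r}(V_1)$ to see a classical additive functor again; combine with Lemma \ref{rem} to identify the odd-input behaviour in terms of $\Ibold_1^{(r)}$ and $\PF$. (iv) Assemble: the morphisms of $\Gammabold^d\Vbold$ mixing even and odd directions are controlled, once $d=p^r$, by $i^\#:\Gammabold^{p^r}\Hom \twoheadrightarrow \Hom^{(r)}$ and the parity-change twist, so there is no room for ``cross terms'' beyond what the four listed functors (or, for $r=0$, the two functors $\Ibold,\PF$) already realise; Pirashvili's lemma kills any putative extension that would glue them nontrivially, forcing $A$ to be a direct sum.

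For the case $r = 0$ one argues more directly: a degree-$1$ superfunctor is a superlinear functor $\Vbold \to \Vbold$, so $A(V)$ is functorially built from $A(k^{1|0})$ and $A(k^{0|1})$, and the only indecomposable degree-$1$ superfunctors are $\Ibold$ and $\PF$ (an even degree-$1$ superlinear functor restricted to $\V$ is $\mathrm{id}$ up to iso, and the grading forces it to be $\Ibold$ or $\PF$); a splitting argument then gives the direct sum decomposition.

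The main obstacle I expect is step (iv): showing that once the even-input and odd-input restrictions are understood, the full superfunctor $A$ on $\Gammabold^{p^r}\Vbold$ is forced to decompose, with no exotic gluing of the four building blocks along the mixed morphisms. This is where the enriched/superlinear structure of $\Gammabold^{p^r}\Vbold$ must be used carefully: one needs that a morphism in $\Gammabold^{p^r}\Hom(k^{1|0}, k^{0|1})$ (an odd morphism raised to the $p^r$-th ``divided power'') acts on $A$ compatibly with its action on $\Ibold^{(r)}$, which is precisely the content of the factorisation \eqref{aaa} through $i^\#$. I expect the cleanest route is to first establish $d = p^r$ and then show $\Ibold^{(r)} = \Ibold_0^{(r)} \oplus \Ibold_1^{(r)}$ (and its $\PF$-twist) generate, under direct sums, \emph{all} additive superfunctors of that degree by an explicit identification of $A$ with $A \otimes_{\Ibold^{(r)}(k^{1|1})} \Ibold^{(r)}$-type coend, then invoke Pirashvili's lemma to rule out non-split self-extensions.
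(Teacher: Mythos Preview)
Your strategy of ``restrict to even inputs, restrict to odd inputs, then glue'' is genuinely different from the paper's route, and the gap you yourself flag in step (iv) is real and not repaired by the tools you invoke. Two concrete problems. First, you never construct a natural transformation between $A$ and the candidate direct sum; knowing the restrictions of $A$ to purely even and purely odd inputs does not by itself produce a map of superfunctors on all of $\Gammabold^{p^r}\Vbold$, and your coend suggestion is not made precise. Second, and more seriously, your appeal to Pirashvili's lemma to ``kill any putative extension that would glue them nontrivially'' is wrong: Pirashvili controls $\Ext^*(A, F\otimes G)$ with $F,G$ reduced, but says nothing about $\Ext^1_{\Pbold}(\Ibold_0^{(r)},\Ibold_1^{(r)})$ and its relatives, which are in fact \emph{nonzero} (see Proposition~\ref{pr-alg}). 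So there is no vanishing result that forces a splitting for free; you really must build the comparison map.

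The paper's argument sidesteps the gluing problem entirely. It introduces the universal additive quotient $\Qbold{d}$ of $\Gammabold^d$ (the cokernel of the multiplication map $\bigoplus \Gammabold^k\otimes\Gammabold^{d-k}\to\Gammabold^d$), and uses the Yoneda maps $\Gammabold^d\otimes A(k)_0\to A$, $(\PF\circ\Gammabold^d)\otimes A(k)_1\to A$, $(\Gammabold^d\circ\PF)\otimes A(\Pi k)_0\to A$, $(\PF\circ\Gammabold^d\circ\PF)\otimes A(\Pi k)_1\to A$. Pirashvili's lemma is applied \emph{here}, to show these factor through $\Qbold{d}$ (not to kill self-extensions). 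The sum of the four resulting maps is then checked to be an isomorphism simply by evaluating on $k$ and $\Pi k$, using the elementary observation (Corollary~\ref{naturalisoaddfunctors}) that a natural transformation between additive superfunctors is an isomorphism iff it is one on $k$ and $\Pi k$. Finally $\Qbold{p^r}$ is identified with $\Ibold_0^{(r)}$ by the same trick. Your factorisation claim ``$A$'s action on morphisms factors through $i^\#$'' is essentially the identification $\Qbold{p^r}\simeq\Ibold_0^{(r)}$, but you assert it for a general $A$ without proof; in the paper it is a consequence, not an input.
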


\begin{rem}
$\Ibold, \PF, \Ibold^{(r)}_0$, $ \Ibold^{(r)}_1$, $\Ibold^{(r)}_0\circ \PF, \Ibold^{(r)}_1\circ \PF$ are indecomposable additive superfunctors. Theorem \ref{classaddfunc} shows that they are the only ones.
\end{rem} 

\begin{rem}
	The decomposition of a functor $F \in \Pbold_{p^r}$ given by Theorem \ref{classaddfunc} can be written in a more precise way, even if less concise. In fact, for $r>0$ the number of copies of $\Ibold_0^{(r)}$ (resp. $\Ibold^{(r)}_0\circ \PF, \; \Ibold^{(r)}_1\circ \PF, \; \Ibold^{(r)}_1$) is equal to the dimension of the vector space $F(k)_0$ (resp. $F(k)_1, F(\Pi k)_0, F(\Pi k)_1$, where $\Pi k := k^{0|1}$). For $r=0$ the number of copies of $\Ibold$ (resp. $\PF$) is equal to the dimension of $F(k)_0$ (resp. $F(k)_1$). This ensures in particular the unicity of the decomposition.
\end{rem}

An important consequence of Theorem \ref{classaddfunc} is that we can precompose a polynomial functor by an additive superfunctor to obtain a new superfunctor.
To be more specific, consider the category $\mathcal{V}^*$ of finite dimensional $\mathbb{Z}$-graded vector spaces and morphisms which preserve the gradings. If $V^* \in \mathcal{V}^*$ and $F \in \Pcal_d$, one can define $F(V^*)$ in this manner \cite[Sec. 2.5]{TouzeENS}:

\begin{itemize}
	\item $F(V^*) := F(V)$ as an object, where $V$ is the underlying ungraded vector space;
	
	\item Consider the action of the multiplicative group $\mathbb{G}_m$ on $V^*$ having weight $i$ on each $V^i$. By functoriality, this induces an action of $\mathbb{G}_m$ on $F(V)$ and then a weight decomposition $F(V) = \bigoplus_{j\ge1}F(V)^j$. We take this one to be the graduation on $F(V^*)$
\end{itemize}
If $f:V^* \rightarrow W^*$ is a morphism in $\mathcal{V}^*$, then $Ff$ sends each $F(V^*)^j$ on $F(W^*)^j$ so it is a morphism in $\V^*$ too. In this way $F$ becomes a linear functor $\Gamma^d \mathcal{V}^* \rightarrow \mathcal{V}^*$.

Note that there is an inclusion of categories $\Vbold_{ev} \subset \mathcal{V}^*$, as well as a functor $\mathcal{V}^* \rightarrow \Vbold_{ev}$ reducing gradings modulo $2$. 
In particular we can restrict this construction to make $F$ into a linear functor $\Gammabold^d \Vbold_{ev} \rightarrow \Vbold_{ev}$. 

Now, let $A$ be a homogeneous additive superfunctor of degree $d>1$ with finite-dimensional values. By Theorem \ref{classaddfunc}, $d=p^r$ for some $r>0$ and $A$ can be written as a direct sum of copies of $\Ibold_0^{(r)}, \Ibold_1^{(r)}$ and their precomposition by $\PF$. From this and (\ref{aaa}) we deduce that the image of $A$ is contained in $\Vbold_{ev}$. Hence, the following composition makes sense:
\[ F \circ A : \Gammabold^{p^r d} \Vbold \xrightarrow{\Gammabold^{p^r}A} \Gammabold^d \Vbold_{ev} \xrightarrow{F} \Vbold_{ev} \subset \Vbold \]
Moreover, the induced map on the morphisms
 \[\Gammabold^{p^r d} \Hom(V,W) \rightarrow \Hom(F(A(V)), F(A(W)))\]
is even linear because it is a composition of even linear maps. Summing up, we have:

\begin{cor}\label{precompaddfunctors}
Let $A$ be an additive superfunctor of degree $p^r, \; r>0$. Then $A$ restricts to a superlinear functor 
\[\Gammabold^{p^r}\Vbold\to \svec_\ev .\]
In particular, if $A$ has finite dimensional values, precomposition by $A$ yields an exact functor
\[-\circ A:\Pcal_d\to (\Pbold_{p^r d})_{ev}\;.\]
\end{cor}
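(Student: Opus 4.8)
The plan is to read off everything from Theorem~\ref{classaddfunc} and the explicit formula (\ref{aaa}) for the Frobenius twists. First I would verify the claim about the restriction $\Gammabold^{p^r}\Vbold\to\svec_\ev$. By the classification, $A$ splits as a direct sum of copies of $\Ibold_0^{(r)}$, $\Ibold_1^{(r)}$, $\Ibold_0^{(r)}\circ\PF$ and $\Ibold_1^{(r)}\circ\PF$; since $\PF$ just exchanges parities and takes $\Vbold$ to $\Vbold$, it suffices to treat $\Ibold_0^{(r)}$ and $\Ibold_1^{(r)}$. On objects these land in $V_0^{(r)}$ and $V_1^{(r)}$ respectively, hence in $\svec_\ev$; on morphisms, formula (\ref{aaa}) shows the structure map $\Gammabold^{p^r}\Hom(V,W)\to\Hom(A(V),A(W))$ factors through the even morphisms, so $A$ genuinely defines a superlinear functor valued in $\svec_\ev$, as claimed.

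Next I would address the precomposition functor $-\circ A$. Given $F\in\Pcal_d$, the recollection preceding the corollary explains how to evaluate a classical functor on a $\mathbb Z$-graded (hence, after reducing mod $2$, a super) vector space concentrated in even degree: one uses the $\mathbb G_m$-weight decomposition. Because $A$ takes values in $\Vbold_{ev}\subset\mathcal V^*$, the composite $F\circ A$ makes sense objectwise, and its structure map on morphisms is the composition
\[
\Gammabold^{p^r d}\Hom(V,W)\xrightarrow{\ \Gammabold^{p^r}A\ }\Gammabold^{d}\Hom(A(V),A(W))\xrightarrow{\ F\ }\Hom(F(A(V)),F(A(W)))\;,
\]
where the first arrow uses $\Sigma_{p^r}^{\times d}\subset\Sigma_{p^r d}$ as in the definition of composition in $\Pbold$. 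Each arrow is even and linear — the first because $A$ is, the second because $F$ restricted to $\Gammabold^d\Vbold_{ev}$ is an ordinary (ungraded) linear functor, whose induced maps are automatically even. Hence $F\circ A\in(\Pbold_{p^r d})_{ev}$, and naturality in $F$ is clear, so $-\circ A$ is a well-defined functor $\Pcal_d\to(\Pbold_{p^r d})_{ev}$.

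Finally I would check exactness. A sequence $0\to F'\to F\to F''\to 0$ in $\Pcal_d$ is exact iff it is exact after evaluation on every (ungraded) vector space. Evaluating the precomposed sequence on $V\in\Vbold$ gives $0\to F'(A(V))\to F(A(V))\to F''(A(V))\to 0$; since $A(V)$ is a particular super vector space and exactness in $(\Pbold_{p^r d})_{ev}$ is tested espace-wise, we must see that $F\mapsto F(U)$ is exact for the super vector spaces $U$ that arise as values of $A$. But such $U$ lie in $\Vbold_{ev}$, and on $\Vbold_{ev}$ the evaluation of a classical functor is computed via the $\mathbb G_m$-weight decomposition of its evaluation on the underlying ungraded space; since taking weight spaces for a torus action is exact, $F\mapsto F(U)$ is exact. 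Therefore $-\circ A$ is exact. The only mildly delicate point — the one I would be most careful about — is making sure the $\mathbb Z$-graded evaluation is compatible with direct sums and with the $\mathbb G_m$-action well enough that exactness really does reduce to exactness of weight-space functors; everything else is a routine unwinding of the definitions already recalled in the excerpt.
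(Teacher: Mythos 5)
Your proposal is correct and follows essentially the same route as the paper: reduce via the classification (Theorem~\ref{classaddfunc}) and formula~(\ref{aaa}) to seeing that $A$'s structure maps land in even morphisms, then compose with the $\mathbb{G}_m$-weight construction that makes a classical functor act on $\Gammabold^d\Vbold_{ev}$. The only addition is that you spell out the exactness claim, which the paper leaves implicit after "Summing up"; your justification --- exactness in $(\Pbold_{p^r d})_{ev}$ is tested objectwise and taking $\mathbb{G}_m$-weight spaces is exact --- is sound.
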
 

\subsection{Proof of Theorem \ref{classaddfunc}}
We follow the approach used in \cite{TouzeAdd} to classify the additive functors in $\mathcal{P}$.
We start by stating a very useful property of additive superfunctors. For the sake of legibility, we let $k$ stand for $k^{1|0}$ and $\Pi k$ for~$k^{0|1}$ for the rest of this section.

%
%

\begin{lm}\label{naturalsurjaddfunctors}
	Let $F,G \in \Pbold$ be superfunctors and suppose that $F$ is additive (resp. that $G$ is additive). Let $\varphi: F \rightarrow G$ be a natural transformation between them. Then $\varphi$ is a monomorphism (resp. epimorphism) if and only if $\varphi_k : F(k) \rightarrow G(k)$ and $\varphi_{\Pi k} : F(\Pi k) \rightarrow G(\Pi k)$ are monomorphisms (resp. epimorphisms).
\end{lm}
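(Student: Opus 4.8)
The plan is to reduce the statement about natural transformations of superfunctors to a purely linear-algebraic statement, using the additivity hypothesis together with the structure of the standard projective generators from the Yoneda lemma (Theorem~\ref{yoneda}). Observe first that the "only if" direction is immediate for both cases: if $\varphi$ is a monomorphism (resp. epimorphism) in $\Pbold_{ev}$, then since kernels and cokernels are computed espace-wise, $\varphi_V$ is injective (resp. surjective) for every $V$, and in particular for $V=k$ and $V=\Pi k$. So the content is the converse.

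First I would treat the case where $G$ is additive and we want to deduce that $\varphi$ is an epimorphism. Set $N := \mathrm{coker}(\varphi)$, computed espace-wise in $\Pbold_{ev}$. Since $\varphi$ is a natural transformation and $G$ is additive, $N$ inherits additivity: the inclusions $G(V)\oplus G(W)\simeq G(V\oplus W)$ are compatible with the images of $\varphi_V,\varphi_W$ inside $G(V\oplus W)$, so $N(V\oplus W)\simeq N(V)\oplus N(W)$. By hypothesis $\varphi_k$ and $\varphi_{\Pi k}$ are surjective, so $N(k)=0=N(\Pi k)$. Now additivity forces $N$ to vanish on all of $\Vbold$: any $V\in\Vbold$ is a direct sum of copies of $k$ and $\Pi k$, hence $N(V)\simeq N(k)^{\oplus m}\oplus N(\Pi k)^{\oplus n}=0$. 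Therefore $\mathrm{coker}(\varphi)=0$ and $\varphi$ is an epimorphism.

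For the case where $F$ is additive and we want $\varphi$ a monomorphism, I would pass to Kuhn duals. By Remark~\ref{kuhnduals} and the fact that $\Ibold,\PF,\Ibold_0^{(r)},\Ibold_1^{(r)}$ are (essentially, by the classification, though we cannot yet invoke Theorem~\ref{classaddfunc} here since this lemma is used in its proof) self-behaved under duality, additivity of $F$ implies additivity of $F^{\#}$; more directly, $F^{\#}(V\oplus W)=(F(V^*\oplus W^*))^*\simeq (F(V^*)\oplus F(W^*))^*\simeq F^{\#}(V)\oplus F^{\#}(W)$, so duality preserves additivity without any appeal to the classification. Under Kuhn duality a monomorphism $F\to G$ corresponds to an epimorphism $G^{\#}\to F^{\#}$ (this follows from Remark~\ref{kuhnduality} applied espace-wise, or from exactness of $(-)^{\#}$), and $\varphi_k$ injective corresponds to $(\varphi^{\#})_{k^*}$ surjective, with $k^*\simeq k$ and $(\Pi k)^*\simeq \Pi k$. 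So the monomorphism statement for $F$ additive is exactly the epimorphism statement for $F^{\#}$ additive, which we have already proved.

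The main obstacle, and the only point requiring genuine care, is verifying that the cokernel (resp. kernel) of a natural transformation into (resp. out of) an additive superfunctor is again additive, i.e. that the decomposition $G(V\oplus W)\simeq G(V)\oplus G(W)$ restricts and passes to the quotient compatibly with $\varphi$. This is where one must be slightly attentive to signs and to the convention \eqref{homparitychange} distinguishing $\Pbold$ from $\Pbold_{ev}$: strictly speaking $\varphi$ need not be even, so one should either split $\varphi$ into its even and odd parts using \eqref{homparitychange} and argue for each separately, or equivalently work with $\varphi$ viewed as an even transformation $\Gammabold^{d,?}$-wise. Once the additivity of the (co)kernel is established, the remaining argument is the trivial observation that an additive functor is determined on all objects by its values on $k$ and $\Pi k$.
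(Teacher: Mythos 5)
Your argument is correct in substance but routes through the statement differently from the paper. The paper's proof is a direct one-step factorisation: for the monomorphism case it writes $\varphi_V$ as the composite
\[
F(V)\simeq F(k)^{\oplus m}\oplus F(\Pi k)^{\oplus n}\longrightarrow G(k)^{\oplus m}\oplus G(\Pi k)^{\oplus n}\subseteq G(V)\;,
\]
where the middle map is a direct sum of copies of $\varphi_k$ and $\varphi_{\Pi k}$ (injective by hypothesis) and the last map is the canonical split inclusion coming from functoriality; the epimorphism case is then obtained by duality. You instead prove the epimorphism case first, by showing that the cokernel is additive and vanishes on $k$ and $\Pi k$, and deduce the monomorphism case by Kuhn duality; both halves of that plan are sound (Kuhn duality does preserve additivity, exchanges espace-wise injections with surjections, and $k^*\simeq k$, $(\Pi k)^*\simeq \Pi k$). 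The price of your route is the detour through the cokernel, which is exactly where you locate your ``main obstacle''.

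On that obstacle, two remarks. First, the specific fix you propose for non-homogeneous $\varphi$ --- split $\varphi$ into its even and odd parts and argue for each separately --- does not work: surjectivity of $\varphi_k=\varphi_{0,k}+\varphi_{1,k}$ only gives $G(k)=\mathrm{im}\,\varphi_{0,k}+\mathrm{im}\,\varphi_{1,k}$ and implies nothing about either homogeneous component, so the hypothesis does not descend to the pieces. Second, the obstacle is avoidable: your argument never uses that $N$ is a superfunctor, only the espace-wise identity $\mathrm{im}\,\varphi_{V\oplus W}=G(i_V)(\mathrm{im}\,\varphi_V)\oplus G(i_W)(\mathrm{im}\,\varphi_W)$ inside $G(V\oplus W)\simeq G(V)\oplus G(W)$, and this follows from naturality with respect to the (even) inclusions and projections of the direct sum for arbitrary $\varphi$. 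Better still, one can skip the cokernel entirely: by additivity $G(V)$ is the sum of the subspaces $G(i_j)(G(k))$ and $G(i_j)(G(\Pi k))$, and each of these equals $G(i_j)(\mathrm{im}\,\varphi_k)$ or $G(i_j)(\mathrm{im}\,\varphi_{\Pi k})$, hence is contained in $\mathrm{im}\,\varphi_V$ by naturality --- which is the exact mirror of the paper's one-line factorisation.
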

\begin{proof}
	One implication is obvious. For the other, we only treat the injection case, the other being dual. We need to check that $\varphi_V : F(V) \rightarrow G(V)$ is an injection for any $V \in \Vbold$. Since $\varphi$ is natural, we may choose a homogenous basis to identify $V \simeq k^{m|n}$. Then, using the additivity of $F$, we can factor $\varphi_V$ by
	\[
	F(V) \simeq F(k^{m|n})	\simeq F(k)^{\oplus m} \oplus F(\Pi k)^{\oplus n} \xrightarrow{\simeq} G(k)^{\oplus m} \oplus G(\Pi k)^{\oplus n} \subset G(V)
	\]
	where the middle arrow is a sum of $\varphi_k$ and $\varphi_{\Pi k}$, by hypothesis isomorphisms, and the final inclusion is given by functoriality on the inclusions $k, \Pi k \subset~V$. The assertion follows. 
\end{proof}

\begin{cor}\label{naturalisoaddfunctors}
	Let $F,G \in \Pbold$ be additive and let $\varphi : F \rightarrow G$ be a natural transformation between them. Then $\varphi$ is an isomorphism if and only if $\varphi_k: F(k) \rightarrow~G(k)$ and $\varphi_{\Pi k} : F(\Pi k) \rightarrow G(\Pi k)$ are.
\end{cor}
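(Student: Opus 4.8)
The plan is to read this off directly from Lemma~\ref{naturalsurjaddfunctors}, exploiting that $F$ and $G$ are \emph{both} additive, so that both the monomorphism form and the epimorphism form of that lemma are available for the transformation $\varphi$. One implication is immediate: evaluation at a fixed object carries an isomorphism of superfunctors to an isomorphism of super vector spaces, so if $\varphi$ is an isomorphism then in particular $\varphi_k$ and $\varphi_{\Pi k}$ are.

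For the converse I would argue as follows. Assume $\varphi_k$ and $\varphi_{\Pi k}$ are isomorphisms. Then in particular they are monomorphisms, and since $F$ is additive Lemma~\ref{naturalsurjaddfunctors} shows that $\varphi$ is a monomorphism; symmetrically they are epimorphisms, and since $G$ is additive the same lemma shows that $\varphi$ is an epimorphism. Unwinding what the proof of that lemma actually establishes, this says precisely that $\varphi_V\colon F(V)\to G(V)$ is both injective and surjective, hence bijective, for every $V\in\Vbold$. The set-theoretic inverses $(\varphi_V)^{-1}$ are again linear maps of super vector spaces, and naturality of $\varphi$ forces the family $\{(\varphi_V)^{-1}\}_V$ to be natural as well; it is therefore a two-sided inverse of $\varphi$ in $\Pbold$, so $\varphi$ is an isomorphism.

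There is no genuine obstacle here. The only point worth flagging is that $\Pbold$ is not abelian, so one cannot abstractly invoke ``monomorphism $+$ epimorphism $\Rightarrow$ isomorphism''; but this is harmless, since the proof of Lemma~\ref{naturalsurjaddfunctors} is carried out espace-wise, and espace-wise bijectivity of a natural transformation manifestly upgrades to an isomorphism of superfunctors, which is exactly what the argument above uses.
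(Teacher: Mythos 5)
Your argument is correct and is exactly the intended one: the paper states this as an immediate corollary of Lemma~\ref{naturalsurjaddfunctors} (with no written proof), applying the monomorphism half via additivity of $F$ and the epimorphism half via additivity of $G$, and concluding from objectwise bijectivity. Your extra remark that $\Pbold$ is not abelian and that one must therefore pass through the espace-wise bijectivity established in the lemma's proof is a valid and harmless clarification, not a deviation.
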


Set now $d \geq 1$. Let $\Qbold{d}$ denote the superfunctor defined as the cokernel of the natural map induced by multiplication in $\Gammabold^d$ 
\begin{equation}\label{multingamma}
\bigoplus_{k=1}^{d-1} \Gammabold^k \otimes \Gammabold^{d-k} \longrightarrow \Gammabold^d
\end{equation}
Note that $\Qbold{1} = \Ibold$, since in that case the direct sum is equal to zero.  

\begin{lm}\label{dimensionQk}
	
$\Qbold{d}$ is an additive superfunctor. Moreover,
	\begin{enumerate}
		\item $\Qbold{d}=0$ if $d$ is not a power of $p$.
		\item $\Qbold{p^r}(k) \simeq k$ for all $r\geq 0$.
		\item $\Qbold{d}(\Pi k)\simeq \Pi k$ if $d=1$ and it is zero otherwise.
	\end{enumerate}
\end{lm}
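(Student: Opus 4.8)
The plan is to prove additivity of $\Qbold{d}$ first, and then to read off (1)--(3) by evaluating $\Qbold{d}$ on the two indecomposable lines $k=k^{1|0}$ and $\Pi k=k^{0|1}$.

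For additivity I would use that $\Gammabold^{\bullet}(V):=\bigoplus_{d\ge 0}\Gammabold^d(V)$ is a graded-supercommutative augmented algebra — the multiplication being the one entering \eqref{multingamma} — and that the exponential decomposition $\Gammabold^{\bullet}(V\oplus W)\simeq\Gammabold^{\bullet}(V)\otimes\Gammabold^{\bullet}(W)$ is an isomorphism of such algebras, induced by the two inclusions $V,W\hookrightarrow V\oplus W$. In degree $d$ this identifies the image of \eqref{multingamma} applied to $V\oplus W$ with the sum of all the mixed summands $\Gammabold^a(V)\otimes\Gammabold^b(W)$ with $a,b\ge 1$ — each of these lying in the image of the component $\Gammabold^a\otimes\Gammabold^{d-a}\to\Gammabold^d$ of \eqref{multingamma} at $V\oplus W$ — together with the images of \eqref{multingamma} applied to $V$ and to $W$ alone, which occupy the pure summands $a=d$ and $b=d$. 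Passing to cokernels yields $\Qbold{d}(V\oplus W)\simeq\Qbold{d}(V)\oplus\Qbold{d}(W)$, and by construction this is precisely the map induced by the inclusions; hence $\Qbold{d}$ is additive. (Equivalently, one may invoke the standard fact that the indecomposables $\overline{B}/\overline{B}^{2}$ of an augmented algebra $B$ carry a tensor product to a direct sum.)

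Granting additivity, every $V\in\Vbold$ is isomorphic to a sum $k^{\oplus m}\oplus(\Pi k)^{\oplus n}$, so $\Qbold{d}$ is the zero superfunctor as soon as $\Qbold{d}(k)=0=\Qbold{d}(\Pi k)$. Here $\Gammabold^d(k)=\Gamma^d(k)$ is one-dimensional, spanned by the $d$-th divided power $\gamma_d(x)$ of a generator $x$, and \eqref{multingamma} carries $\gamma_i(x)\otimes\gamma_{d-i}(x)$ to $\gamma_i(x)\gamma_{d-i}(x)=\binom{d}{i}\gamma_d(x)$ for $1\le i\le d-1$; hence $\Qbold{d}(k)=0$ whenever one of these binomial coefficients is invertible in $k$, while $\Qbold{d}(k)\simeq\Gamma^d(k)\simeq k$ otherwise (for $d=1$ the range of $i$ is empty, so \eqref{multingamma} is zero and $\Qbold{1}(k)\simeq k$). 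By Lucas' theorem — equivalently, because $(1+x)^d=1+x^d$ in $k[x]$ exactly when $d$ is a power of $p$ — all the $\binom{d}{i}$ with $0<i<d$ vanish modulo $p$ if and only if $d=p^r$ for some $r\ge 0$; this gives (2) and also shows $\Qbold{d}(k)=0$ when $d$ is not a power of $p$. On the odd line, the decomposition $\Gammabold^n(V)\simeq\bigoplus_{a+b=n}\Gamma^a(V_0)\otimes\Lambda^b(V_1)$ gives $\Gammabold^d(\Pi k)=\Lambda^d(k)$, which is $\Pi k$ for $d=1$ and $0$ for $d\ge 2$; combined with $\Qbold{1}=\Ibold$ this is (3). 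Finally, if $d$ is not a power of $p$ then $d\ge 2$, so $\Qbold{d}(k)=\Qbold{d}(\Pi k)=0$ and additivity forces $\Qbold{d}=0$, which is (1).

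The step I expect to demand the most care is additivity: one must check that the exponential isomorphism is compatible with the multiplications used to form $\Qbold{d}$, so that the mixed summands genuinely become decomposable inside $\Gammabold^d(V\oplus W)$. The super signs cause no trouble, since $\Gammabold^{\bullet}$ stays graded-supercommutative, but this compatibility should be written out explicitly; once it is in place, parts (1)--(3) follow quickly.
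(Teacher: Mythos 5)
Your proof is correct and follows essentially the same route as the paper: the paper simply cites Touz\'e's classical argument (exponential decomposition of $\Gammabold^{\bullet}$ for additivity, the divided-power computation $\gamma_i(x)\gamma_{d-i}(x)=\binom{d}{i}\gamma_d(x)$ on $k$ for (1)--(2)), which is exactly what you write out, and handles (3) identically via $\Gammabold^d(\Pi k)=\Lambda^d(k)=0$ for $d>1$ together with $\Qbold{1}=\Ibold$.
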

\begin{proof}
	The proof of the first three statements is a rewriting word-for-word of \cite[Lemma 3.1 and 3.3]{TouzeAdd}. The point (3) follows from the fact that $\Qbold{1} = \Ibold$ and $\Gammabold^d(\Pi k)=\Lambda^d(k)=0$ if $d>1$. 
\end{proof}
Let $F$ be a superfunctor of degree $d$. Note that $\Gammabold^{d, k} = \Gammabold^d$ and $\Gammabold^{d, \Pi k} \simeq \Gammabold^d \circ \PF$, so that the Yoneda maps (\ref{explicityonedabismorphism}) yield morphisms
%
%
\begin{gather}\label{yonedamorphappoggio}
\begin{split}
(\Gammabold^d \oplus \PF \circ \Gammabold^d) \otimes F(k) \longrightarrow F\: , \\
(\Gammabold^d \circ \PF \oplus \PF \circ \Gammabold^d \circ \PF) \otimes F(\Pi k) \longrightarrow F\: ,
\end{split}
\end{gather} 
%
which can be decomposed into four morphisms 
\begin{gather}\label{naturalmaps}
\begin{split}
\Gammabold^d \otimes F(k)_0 \longrightarrow F\: , \\
\PF \circ \Gammabold^d \otimes F(k)_1 \longrightarrow F\: , \\
(\Gammabold^d \circ \PF) \otimes F(\Pi k)_0 \longrightarrow F \: , \\ 
(\PF \circ \Gammabold^d \circ \PF) \otimes F(\Pi k)_1 \longrightarrow F \: .
\end{split}
\end{gather}
Now evaluate the first two ones on $k$ and the second two ones on $\Pi k$. By the explicit formula in (\ref{explicityonedabismorphism}), the direct factor $\Gammabold^d (V) \otimes F(k)_0$ is mapped onto the even part $F(k)_0$ of $F(k)$, and similarly for the other ones:
\begin{gather}\label{isomorphismappoggio}
\begin{split}
\Gammabold^d(k) \otimes F(k)_0 \longrightarrow F(k)_0 \: ,\\
\PF \circ \Gammabold^d(k) \otimes F(k)_1 \longrightarrow F(k)_1 \: ,\\
\Gammabold^d(k) \otimes F(\Pi k)_0 \longrightarrow F(\Pi k)_0 \: ,\\ 
\PF \circ \Gammabold^d(k) \otimes F(\Pi k)_1 \longrightarrow F(\Pi k)_1 \: .
\end{split}
\end{gather}
Since $\Gammabold^d(k) \simeq \Gammabold^d \Hom(k,k)$ is spanned by the identity, the morphisms (\ref{isomorphismappoggio}) are immediately isomorphisms.

Let us go back for a moment to the natural maps (\ref{naturalmaps}). Precompose them by the multiplication map (\ref{multingamma}), duly composed with $\PF$ where needed. The composite map is zero by Pirashvili's vanishing lemma \ref{pirashvili}, thus the morphisms (\ref{naturalmaps}) factor through natural maps
\begin{gather}\label{Qisomorphism}
\begin{split}
\Qbold{d} \otimes F(k)_0 \longrightarrow F\: , \\
\PF \circ \Qbold{d} \otimes F(k)_1 \longrightarrow F\: , \\
(\Qbold{d} \circ \PF) \otimes F(\Pi k)_0 \longrightarrow F \: ,\\
(\PF \circ \Qbold{d} \circ \PF) \otimes F(\Pi k)_1 \longrightarrow F\: .
\end{split}
\end{gather}
As before, evaluate the first two ones on $k$ and the second two ones on $\Pi k$ to get
\begin{gather}\label{Qisomorphismappoggio}
\begin{split}
\Qbold{d}(k) \otimes F(k)_0 \longrightarrow F(k)_0 \; ,\\
\PF \circ \Qbold{d}(k) \otimes F(k)_1 \longrightarrow F(k)_1 \; ,\\
\Qbold{d}(k) \otimes F(\Pi k)_0 \longrightarrow F(\Pi k)_0 \; ,\\
\PF \circ \Qbold{d}(k) \otimes F(\Pi k)_1 \longrightarrow F(\Pi k)_1 \; .
\end{split}
\end{gather}
The morphisms (\ref{Qisomorphismappoggio}) are isomorphisms because they are the factorisation of the isomorphisms (\ref{isomorphismappoggio}) through the quotient map $\Gammabold^d(k) \rightarrow \Qbold{d}(k)$.

\begin{prop}\label{Quasistructureaddfunc}
	Let $F$ be an additive superfunctor of degree $d>1$. Then there is a natural isomorphism 
	
	\begin{center}$
		\begin{array}{l}
		F \simeq \Qbold{d} \otimes F(k)_0 \; \oplus \; (\PF \circ \Qbold{d}) \otimes F(k)_1 \\
		\; \oplus \; (\Qbold{d} \circ \PF) \otimes F(\Pi k)_0 \; \oplus \; (\PF \circ \Qbold{d} \circ \PF) \otimes F(\Pi k)_1 \: .
		\end{array}
		$\end{center}
\end{prop}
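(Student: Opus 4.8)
The plan is to combine the four natural maps in \eqref{Qisomorphism} into a single natural transformation and then apply the rigidity results for additive superfunctors obtained above. First I would form the direct sum
\[
\Phi \colon \Qbold{d} \otimes F(k)_0 \; \oplus \; (\PF \circ \Qbold{d}) \otimes F(k)_1 \; \oplus \; (\Qbold{d} \circ \PF) \otimes F(\Pi k)_0 \; \oplus \; (\PF \circ \Qbold{d} \circ \PF) \otimes F(\Pi k)_1 \longrightarrow F
\]
whose four components are exactly the maps \eqref{Qisomorphism}. Since $F$ is additive of degree $d>1$, Lemma \ref{dimensionQk} forces $d$ to be a power of $p$ (otherwise $\Qbold{d}=0$ and there is nothing to prove, as then $F(k)=F(\Pi k)=0$ by the isomorphisms \eqref{isomorphismappoggio} and additivity would make $F=0$, contradicting $d>1$; so really $d=p^r$, $r\ge 1$). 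By Lemma \ref{dimensionQk} again, $\Qbold{d}$ is additive, and hence so is each of $\PF\circ\Qbold{d}$, $\Qbold{d}\circ\PF$, $\PF\circ\Qbold{d}\circ\PF$ (precomposition and postcomposition by the additive functor $\PF$ preserve additivity, as is immediate from the definition and Lemma \ref{rem}-style reasoning). Therefore the source of $\Phi$ is additive, being a finite direct sum of additive superfunctors tensored with finite-dimensional vector spaces.

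Now both source and target of $\Phi$ are additive, so by Corollary \ref{naturalisoaddfunctors} it suffices to check that $\Phi_k$ and $\Phi_{\Pi k}$ are isomorphisms. I would compute these two evaluations using Lemma \ref{dimensionQk}: since $\Qbold{d}(\Pi k)=0$ for $d>1$, evaluating the source at $k$ kills the third and fourth summands (each contains a factor $\Qbold{d}\circ\PF$ or $\PF\circ\Qbold{d}\circ\PF$, whose value at $k$ involves $\Qbold{d}(\Pi k)=0$), so $\Phi_k$ reduces to the first two maps of \eqref{Qisomorphismappoggio}, which are isomorphisms onto $F(k)_0$ and $F(k)_1$ respectively, hence $\Phi_k$ is an isomorphism onto $F(k)=F(k)_0\oplus F(k)_1$. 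Dually (using $\PF(\Pi k)=k$ and $\PF(k)=\Pi k$ together with $\Qbold{d}(\Pi k)=0$, $\Qbold{d}(k)\simeq k$), evaluating at $\Pi k$ kills the first two summands and reduces $\Phi_{\Pi k}$ to the last two maps of \eqref{Qisomorphismappoggio}, again isomorphisms onto $F(\Pi k)_0$ and $F(\Pi k)_1$. Thus $\Phi_k$ and $\Phi_{\Pi k}$ are isomorphisms, and Corollary \ref{naturalisoaddfunctors} gives that $\Phi$ is a natural isomorphism, which is the claimed decomposition.

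The only genuinely delicate point is the bookkeeping with $\PF$: one must be careful that $(\PF\circ\Qbold{d})$ evaluated at $k$ gives $\PF(\Qbold{d}(k))\simeq\PF(k)=\Pi k$ placed in odd degree, matching the odd space $F(k)_1$, and similarly $(\Qbold{d}\circ\PF)(\Pi k)\simeq\Qbold{d}(k)\simeq k$ but now the outer $F(\Pi k)_0$ lives in even degree — so the parity accounting in identifying each evaluated component with the correct graded piece of $F(k)$ or $F(\Pi k)$ has to be done explicitly via the formula \eqref{explicityonedabismorphism}. This is precisely what was checked in passing from \eqref{naturalmaps} to \eqref{isomorphismappoggio} and then to \eqref{Qisomorphismappoggio}, so the argument is already essentially in place; the proposition is just the packaging of those four isomorphisms into one, legitimised by Corollary \ref{naturalisoaddfunctors}. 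I expect no further obstacle beyond making sure the vanishing $\Qbold{d}(\Pi k)=0$ is invoked correctly to discard the "wrong" summands at each of the two evaluation points.
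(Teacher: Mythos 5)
Your argument is exactly the paper's proof: sum the four maps of (\ref{Qisomorphism}), observe via Lemma \ref{dimensionQk} and the isomorphisms (\ref{Qisomorphismappoggio}) that the evaluations at $k$ and $\Pi k$ are isomorphisms (the vanishing $\Qbold{d}(\Pi k)=0$ discarding the irrelevant summands at each point), and conclude by Corollary \ref{naturalisoaddfunctors}; your explicit check that the source is additive is a detail the paper leaves implicit but is needed to apply the corollary. The only blemish is the parenthetical claim that $F=0$ would ``contradict $d>1$'' when $d$ is not a $p$-power --- it doesn't (the zero functor is homogeneous of any degree), but the proposition then holds trivially with both sides zero, so nothing is affected.
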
 
\begin{proof}
	The candidate isomorphism is the sum of the four natural morphisms (\ref{Qisomorphism}). Lemma \ref{dimensionQk} and isomorphisms (\ref{Qisomorphismappoggio}) imply that it is an isomorphism when evaluated on $k$ and $\Pi k$, thus the assertion follows from Corollary \ref{naturalisoaddfunctors}.
\end{proof}

The analogue statement of Proposition \ref{Quasistructureaddfunc} for functors of degree $1$ requires a slight modification. Namely, we need the following:

\begin{lm}\label{app}
	Let $F$ be a strict polynomial superfunctor of degree $1$. Then there exists an odd isomorphism $F(k) \simeq F(\Pi k)$.
\end{lm}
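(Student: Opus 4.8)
The plan is to exhibit an explicit natural odd isomorphism using the parity-change functor $\PF$ of degree $1$. Recall that $\PF(k) = \PF(k^{1|0})$ has its copy of $k$ placed in odd degree, i.e.\ $\PF(k) \simeq k^{0|1} = \Pi k$ as super vector spaces, and likewise $\PF(\Pi k) \simeq k^{1|0} = k$. So the composite superfunctor $F \circ \PF$ satisfies $(F \circ \PF)(k) = F(\Pi k)$ and $(F \circ \PF)(\Pi k) = F(k)$ on the nose. The strategy is to produce, for a degree $1$ superfunctor $F$, a natural odd transformation $F \to F \circ \PF$ (or directly a family of maps $F(V) \to F(\PF(V))$) which is an isomorphism, and then evaluate it at $V = k$.

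First I would recall the structure of $\PF$ on morphisms: $\PF(\varphi) = (-1)^{\overline{\varphi}}\varphi$, so in particular $\PF$ acts as the identity on objects' underlying ungraded spaces and only shuffles the grading. The natural candidate for the comparison map is built from the canonical \say{identity} element: since $F$ has degree $1$, $\Gammabold^1 \Hom(V,W) = \Hom(V,W)$, and a degree $1$ superfunctor is simply an even superlinear functor on $\Vbold$ itself. Consider the odd linear isomorphism $\pi_V : V \to \PF(V)$ which is the identity on underlying spaces but reverses parity; it is natural in $V$ in the graded (signed) sense. Applying $F$ (which is a functor, hence respects this as a morphism in the superlinear category) gives maps $F(\pi_V) : F(V) \to F(\PF(V))$. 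Because $\pi_V$ is odd and $F$ is an even superlinear functor, $F(\pi_V)$ is an odd linear map, and since $\pi_V$ is an isomorphism with inverse $\pi_V^{-1}$ (up to sign), $F(\pi_V)$ is an isomorphism with inverse (up to sign) $F(\pi_{V}^{-1})$. Evaluating at $V = k$ yields the desired odd isomorphism $F(k) \xrightarrow{\ \sim\ } F(\PF(k)) = F(\Pi k)$.

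In more hands-on terms, I would spell out that $\pi_k : k = k^{1|0} \to k^{0|1} = \Pi k$ sends the basis vector to the basis vector but changes its declared parity, so as an element of $\Hom(k, \Pi k) = \Gammabold^1 \Hom(k,\Pi k)$ it is an odd homogeneous morphism; then $F(\pi_k) \in \Hom(F(k), F(\Pi k))$ is odd because the structure map $\Gammabold^1 \Hom(k, \Pi k) \to \Hom(F(k), F(\Pi k))$ is even linear (this is exactly evenness of $F$ as a superlinear functor). Functoriality $F(\pi_k^{-1}) \circ F(\pi_k) = F(\pi_k^{-1} \circ \pi_k) = F(\id_k) = \id_{F(k)}$ (one must track a possible sign from the Koszul rule when composing two odd morphisms, but it only affects $F(\pi_k)$ by an overall scalar, not its invertibility) gives that $F(\pi_k)$ is an isomorphism.

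The only subtle point — and the part I would be most careful about — is the bookkeeping of signs: composing odd morphisms in a superlinear category introduces Koszul signs, so one checks that $F(\pi_k)$ and $F(\pi_k)^{-1}$ differ from $F(\pi_k^{-1})$ only by $\pm 1$, which is harmless. There is also a trivial degenerate case to dispatch: if $F$ is the zero functor (or $F(k) = 0$), the statement holds vacuously, and in any case no finite-dimensionality or reducedness hypothesis is needed here. This lemma will then feed into the degree $1$ analogue of Proposition \ref{Quasistructureaddfunc}, letting one identify $F(\Pi k)$ with $F(k)$ (with a parity shift) in the Yoneda decomposition.
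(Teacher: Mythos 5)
Your proof is correct and is essentially the paper's own argument: apply $F$ to the odd parity-change isomorphism $\pi : k \to \Pi k$, use the evenness of the structure map $\Hom(k,\Pi k) \to \Hom(F(k),F(\Pi k))$ to see that $F(\pi)$ is odd, and use functoriality with the inverse $\pi' : \Pi k \to k$ to see it is invertible. The extra framing via $F\circ\PF$ and the Koszul-sign caveat are fine but not needed beyond what the paper does.
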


\begin{proof}
	By hypothesis $F$ is defined on morphisms by an even linear map
	\[ F_{V,W} : \Hom(V,W) \longrightarrow \Hom(F(V), F(W)) \: . \]
	Let $\pi : k \rightarrow \Pi k$, $\pi' : \Pi k \rightarrow k$ be the parity change maps. Clearly $\pi' \circ \pi = Id_k$ and $\pi \circ \pi' = Id_{\Pi k}$. Then by functoriality $F(\pi) : F(k) \rightarrow F(\Pi k)$ is an odd isomorphism with inverse $F(\pi')$. 
\end{proof}

\begin{prop}\label{Partialclass1}
	Let $F$ be an additive superfunctor of degree $1$. Then there is a natural isomorphism
	
	\[ F \simeq \Ibold \otimes F(k)_0 \: \oplus \: \PF \otimes F(k)_1 \: . \]
\end{prop}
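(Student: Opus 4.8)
The plan is to mimic the proof of Proposition \ref{Quasistructureaddfunc}, but using the fact that $\Qbold{1}=\Ibold$ together with Lemma \ref{app} to rewrite the \say{$\Pi k$-part} of the decomposition. Concretely, applying the same Yoneda argument as in the higher-degree case produces four natural morphisms as in (\ref{Qisomorphism}), which for $d=1$ read $\Ibold\otimes F(k)_0\to F$, $(\PF\circ\Ibold)\otimes F(k)_1\to F$, $(\Ibold\circ\PF)\otimes F(\Pi k)_0\to F$ and $(\PF\circ\Ibold\circ\PF)\otimes F(\Pi k)_1\to F$. Since $\PF\circ\Ibold=\PF=\Ibold\circ\PF$ and $\PF\circ\Ibold\circ\PF=\PF\circ\PF\simeq\Ibold$ (because $\PF\circ\PF\simeq\Ibold$ as superfunctors), these four morphisms have sources $\Ibold\otimes F(k)_0$, $\PF\otimes F(k)_1$, $\PF\otimes F(\Pi k)_0$ and $\Ibold\otimes F(\Pi k)_1$ respectively.

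Next I would identify the two \say{$\Pi k$} contributions with the two \say{$k$} ones. By Lemma \ref{app}, the odd isomorphism $F(\pi):F(k)\xrightarrow{\sim}F(\Pi k)$ swaps parities, so it restricts to isomorphisms $F(k)_0\simeq F(\Pi k)_1$ and $F(k)_1\simeq F(\Pi k)_0$. Under these identifications the third and fourth morphisms above become, up to isomorphism, further maps $\PF\otimes F(k)_1\to F$ and $\Ibold\otimes F(k)_0\to F$. One then checks — by tracing through the explicit Yoneda formula (\ref{explicityonedabismorphism}) and the definition of $F(\pi)$ on morphisms — that these coincide (up to a sign which can be absorbed) with the first two morphisms, so that the candidate isomorphism can be taken to be simply the sum of the two morphisms $\Ibold\otimes F(k)_0\to F$ and $\PF\otimes F(k)_1\to F$, i.e. the first Yoneda morphism in (\ref{yonedamorphappoggio}) restricted to the degree-one case.

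Finally, to conclude that this sum $\Ibold\otimes F(k)_0\oplus\PF\otimes F(k)_1\to F$ is an isomorphism, I would invoke Corollary \ref{naturalisoaddfunctors}: it suffices to check the map is an isomorphism after evaluating on $k$ and on $\Pi k$. On $k$ this is exactly the isomorphism (\ref{isomorphismappoggio}) (the first two lines, with $d=1$ so that $\Gammabold^1(k)$ and $\PF\circ\Gammabold^1(k)$ are one-dimensional), giving $F(k)_0\oplus F(k)_1=F(k)$. On $\Pi k$ one uses that $\Ibold(\Pi k)=\Pi k$ has purely odd underlying space and $\PF(\Pi k)=k$ has purely even underlying space, so the evaluation map is $\big(\Pi k\otimes F(k)_0\big)\oplus\big(k\otimes F(k)_1\big)\to F(\Pi k)$; by Lemma \ref{app} the right side is $F(\Pi k)_1\oplus F(\Pi k)_0$ and one verifies via naturality with respect to $\pi,\pi'$ that the map is the evident isomorphism.

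The main obstacle I expect is the bookkeeping of parities and signs in the second step: one must be careful that $F(\pi)$ is odd, that composing the Yoneda morphisms with it does not introduce a discrepancy that prevents the $\Pi k$-morphisms from being redundant, and that the evaluation-on-$\Pi k$ check genuinely produces an isomorphism rather than merely an isomorphism of underlying ungraded spaces. All of this is routine given Lemma \ref{app} and the explicit formula (\ref{explicityonedabismorphism}), but it is the only place where the degree-one case differs in substance from Proposition \ref{Quasistructureaddfunc}.
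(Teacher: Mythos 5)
Your proposal is correct and follows essentially the same route as the paper: take the sum of the first two morphisms of (\ref{Qisomorphism}) (with $\Qbold{1}=\Ibold$), check it is an isomorphism on $k$ and, via Lemma \ref{app}, on $\Pi k$, then conclude by Corollary \ref{naturalisoaddfunctors}. The extra discussion of why the two \say{$\Pi k$} Yoneda morphisms are redundant is harmless but not needed for the argument.
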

\begin{proof}
	The natural morphism is given by the sum of the first two ones in (\ref{Qisomorphism}) (recall that $\Qbold{1}=\Ibold$). It is clearly an isomorphism when evaluated on $k$, and it is also when evaluated on $\Pi k$ by Lemma \ref{app}. Thus it is an isomorphism by Corollary \ref{naturalisoaddfunctors}.
\end{proof}

All that is left is to identify $\Qbold{d}$.

\begin{prop}\label{QegalI0}
	Let $d\ge0$. Then $\Qbold{d} \simeq \Ibold_0^{(r)}$ if $d=p^r$ and zero otherwise.
\end{prop}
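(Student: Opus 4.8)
The plan is to establish this by comparing $\Qbold{d}$ with the known additive functor $\Ibold_0^{(r)}$ via the structural results already in hand. First I would dispense with the case $d=0$ (both functors are zero since $\Gammabold^0$ is constant) and the case $d=1$, where $\Qbold{1}=\Ibold=\Ibold_0^{(0)}$ by direct inspection. By Lemma \ref{dimensionQk}(1), $\Qbold{d}=0$ unless $d=p^r$, so the only content is the comparison $\Qbold{p^r}\simeq\Ibold_0^{(r)}$ for $r\geq 1$. The natural place to produce a morphism between them is the surjection $i^\#:\Gammabold^{p^r}\twoheadrightarrow\Ibold^{(r)}$ coming from \eqref{aaa}: since $i^\#$ lands in $\Ibold_0^{(r)}$, I get a natural transformation $\Gammabold^{p^r}\to\Ibold_0^{(r)}$. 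The key claim is that this transformation kills the image of the multiplication map \eqref{multingamma}, hence factors through a natural transformation $\psi:\Qbold{p^r}\to\Ibold_0^{(r)}$. This vanishing is essentially the classical fact that the composite $\Gammabold^k\otimes\Gammabold^{p^r-k}\to\Gammabold^{p^r}\xrightarrow{i^\#}\Ibold_0^{(r)}$ is zero — the Frobenius twist quotient of $\Gamma^{p^r}$ annihilates decomposable elements — which one can check on the universal case $V=k$, or invoke directly from the classical computation in \cite{TouzeAdd} since $i^\#$ only sees the even part.

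Having constructed $\psi:\Qbold{p^r}\to\Ibold_0^{(r)}$, I would finish using Corollary \ref{naturalisoaddfunctors}: both source and target are additive (by Lemma \ref{dimensionQk} and Lemma \ref{rem}), so it suffices to check that $\psi_k$ and $\psi_{\Pi k}$ are isomorphisms. On $\Pi k=k^{0|1}$ both sides vanish for $r\geq 1$ (Lemma \ref{dimensionQk}(3) gives $\Qbold{p^r}(\Pi k)=0$, and $\Ibold_0^{(r)}(\Pi k)=(\Pi k)_0^{(r)}=0$), so there is nothing to check. On $k$ we have $\Qbold{p^r}(k)\simeq k$ by Lemma \ref{dimensionQk}(2) and $\Ibold_0^{(r)}(k)=k^{(r)}\simeq k$, both one-dimensional and even; it remains only to see $\psi_k\neq 0$. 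This follows because $\psi_k$ is, by construction, the factorisation of $i^\#_k:\Gammabold^{p^r}(k)\to\Ibold_0^{(r)}(k)$ through the quotient $\Gammabold^{p^r}(k)\twoheadrightarrow\Qbold{p^r}(k)$, and $i^\#_k$ is already surjective (it is the dual of the injection $k^{(r)}\hookrightarrow S^{p^r}(k)$, $v^{(r)}\mapsto v^{p^r}$, which is nonzero), so $\psi_k$ is a nonzero map between one-dimensional spaces, hence an isomorphism.

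I expect the main obstacle to be the factorisation step — verifying cleanly that $i^\#$ annihilates the decomposables $\sum_{k=1}^{p^r-1}\Gammabold^k\otimes\Gammabold^{p^r-k}$. One has to be a little careful that we are working with the super versions $\Gammabold^\bullet$, but since $i^\#$ factors through the even part $\Gammabold^{p^r}(V_0)$ and the multiplication map restricts compatibly, the claim reduces to the purely classical statement about $\Gamma^\bullet$ of an ordinary vector space, which is exactly what underlies Lemma \ref{dimensionQk}; so in practice this is a citation to \cite{TouzeAdd} rather than a new computation. Everything else is bookkeeping with Corollary \ref{naturalisoaddfunctors} and the dimension count of Lemma \ref{dimensionQk}.
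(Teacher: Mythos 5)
Your argument is correct, but it takes a genuinely different route from the paper. The paper's proof is essentially a one-liner: it applies Proposition \ref{Quasistructureaddfunc} to $F=\Ibold_0^{(r)}$ itself; since $\Ibold_0^{(r)}(k)=k$ is purely even and $\Ibold_0^{(r)}(\Pi k)=0$, three of the four summands in that decomposition vanish and the remaining one reads $\Ibold_0^{(r)}\simeq \Qbold{p^r}\otimes k=\Qbold{p^r}$ --- no explicit comparison morphism is ever constructed. You instead build the map by hand, factoring the Frobenius surjection $i^\#:\Gammabold^{p^r}\to\Ibold_0^{(r)}$ through the quotient $\Qbold{p^r}$ and then invoking Corollary \ref{naturalisoaddfunctors} after evaluation on $k$ and $\Pi k$; this is valid and mirrors the classical argument of \cite{TouzeAdd}. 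Note that the step you single out as the main obstacle is actually free with the tools already in the paper: $\Gammabold^k\otimes\Gammabold^{p^r-k}$ is a tensor product of reduced functors and $\Ibold_0^{(r)}$ is additive, so $\Hom_{\Pbold}(\Gammabold^k\otimes\Gammabold^{p^r-k},\Ibold_0^{(r)})=0$ by Lemma \ref{pirashvili}, which is exactly how the paper obtains the factorisations (\ref{Qisomorphism}); by contrast, your suggested shortcut of checking the vanishing only at $V=k$ would not suffice on its own. Two small quibbles on the degenerate cases: $\Qbold{0}=\Gammabold^0$ is the constant functor $k$, which is not zero (the paper sidesteps this by only defining $\Qbold{d}$ for $d\ge1$), and for $d=1$ the identification $\Qbold{1}=\Ibold$ with ``$\Ibold_0^{(0)}$'' (literally $V\mapsto V_0$) is a convention the paper also glosses over. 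The trade-off between the two proofs: the paper's is shorter because the structure theorem has already been paid for, while yours yields an explicit isomorphism.
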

\begin{proof}
	If $d$ is not a power of $p$, Lemma \ref{dimensionQk}(1) ensures us that $\Qbold{d}=0$. Otherwise, use the fact that $\Ibold_0^{(r)}$ vanishes on purely odd spaces and sends $k$ onto itself (up to isomorphism). The statement follows then from an application of Proposition \ref{Quasistructureaddfunc} to $F=\Ibold_0^{(r)}$.
\end{proof}

We can finally gather the results of the section to prove the classification theorem.

\begin{proof}[Proof of Theorem \ref{classaddfunc}]
Propositions \ref{Quasistructureaddfunc} and \ref{QegalI0} imply in particular that a nonzero additive superfunctor has degree $p^r$. If $r=0$, it is isomorphic to a direct sum of copies of $\Ibold$ and $\PF$ by Proposition \ref{Partialclass1}; if $r>0$, it is isomorphic to a direct sum of copies of
$\Ibold_0^{(r)}, \; \PF \circ \Ibold_0^{(r)}, \; \Ibold_0^{(r)} \circ \PF$ and $\PF \circ \Ibold_0^{(r)} \circ \PF$ by Proposition \ref{Quasistructureaddfunc}. The statement follows then by the isomorphisms $\PF \circ \Ibold_1^{(r)}\simeq \Ibold_0^{(r)} \circ \PF$ and $\PF \circ \PF = \Ibold$.
\end{proof}

\section{Ext-computations}\label{sec-ext-comp}

In this section, we give a general formula to compute the extensions of the form 
\[\Ext^*_{\Pbold}(I^{(r)}\circ A,F\circ B)\]
where $I^{(r)}$ is the Frobenius twist in $\mathcal{P}$ \cite[Section 1]{FS}, $A$ and $B$ are additive homogeneous polynomial superfunctors of the same degree, strictly greater than $1$, and $F$ is a polynomial functor of degree $p^r$. 

We will make constant use of the Yoneda composition of extensions. We denote by $y\cdot x$ the Yoneda composition of $x\in \Ext^*_\A(X,Y)$ and $y\in\Ext^*_\A(Y,Z)$ in an abelian category $\A$. Then $\Ext^*_\A(X,-)$ is a functor from $\A$ to the category of right modules over the Yoneda algebra $\Ext^*_\A(X,X)$.

Throughout the rest of the section we fix $\ell\in\{0,1\}$. When an $\overline{\ell}$ appears nearby, it means $1-\ell$.

\subsection{The case $A=\Ibold^{(s)}_\ell=B$}\label{secsigma}

We first recall from \cite{Drupieski} the following fundamental computation. Here $(S^{p^s})_\ell^{(t-s)} := S^{p^s} \circ \Ibold_\ell^{(t-s)}$.
\begin{prop}\emph{\cite[Thm 4.5.1, Cor. 4.6.2 and 4.6.5]{Drupieski}\label{pr-alg}~} 
\begin{enumerate}[(1)]
\item 
The Yoneda algebra $\Ext_{\Pbold}^*(\Ibold_\ell^{(t)}, \Ibold_\ell^{(t)})$ is a graded commutative algebra generated by elements $\{e_t^1, \dots, e_t^{t}\}$  with each $e_t^i$ of degree $2p^{i-1}$, and with relations $(e_t^1)^p = \dots = (e_t^{t-1})^p = 0$. 

\item $\Ext_{\Pbold}^*(\Ibold_\ell^{(t)}, \Ibold_{\overline{\ell}}^{(t)})$ is a graded free $\Ext_{\Pbold}^*(\Ibold_\ell^{(t)}, \Ibold_\ell^{(t)})$-module generated by an element $c_t$ of degree $p^t$.

\item For $0\le s<t$, the canonical morphism $\Ibold_\ell^{(t)}\to (S^{p^s})_\ell^{(t-s)}$ induces surjective maps:
\[\Ext_{\Pbold}^*(\Ibold_\ell^{(t)}, \Ibold_\ell^{(t)})\to \Ext_{\Pbold}^*(\Ibold_\ell^{(t)}, (S^{p^s})_\ell^{(t-s)})\;,\]

\[\Ext_{\Pbold}^*(\Ibold_\ell^{(t)}, \Ibold_{\overline{\ell}}^{(t)})\to \Ext_{\Pbold}^*(\Ibold_\ell^{(t)}, (S^{p^s})_{\overline{\ell}}^{(t-s)})\;,\]
whose kernels are the right $\Ext_{\Pbold}^*(\Ibold_\ell^{(t)}, \Ibold_\ell^{(t)})$-modules respectively generated by $\{e_t^1,\dots,e_t^s\}$ and $\{e_t^1 \cdot c_t,\dots, e_t^s \cdot c_t.\}$
\end{enumerate}
\end{prop}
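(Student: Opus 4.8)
The three assertions are reformulations of the results of Drupieski cited just after the statement, so the plan is not to reprove them from scratch but to explain how to read them off in the present notation, the one point that still needs a small argument being the independence of the answer from $\ell$. In \cite{Drupieski} the relevant $\Ext$-groups are computed for the Frobenius twist superfunctors; since $\Ext^*$ is additive in each variable and $\Ibold^{(t)}\simeq\Ibold_0^{(t)}\oplus\Ibold_1^{(t)}$ by construction, $\Ext^*_{\Pbold}(\Ibold^{(t)},\Ibold^{(t)})$ decomposes as a direct sum of the four pieces $\Ext^*_{\Pbold}(\Ibold_\ell^{(t)},\Ibold_m^{(t)})$ with $\ell,m\in\{0,1\}$, the two diagonal ones being graded subalgebras. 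First I would extract from Drupieski's Theorem 4.5.1 the structure of the diagonal factor, reading off the generators $e_t^1,\dots,e_t^t$ with $\deg e_t^i=2p^{i-1}$ and the relations $(e_t^i)^p=0$ for $1\le i\le t-1$, which gives (1); and from Corollaries 4.6.2 and 4.6.5 the off-diagonal factor, free of rank one over the diagonal subalgebra on a class $c_t$ of degree $p^t$, which gives (2).

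Next I would establish the independence of $\ell$. The assignment $X\mapsto\PF\circ X\circ\PF$ is exact and, as $\PF\circ\PF\simeq\Ibold$, it is an autoequivalence of $\Pbold_{ev}$; it therefore induces isomorphisms on all $\Ext$-groups that preserve the cohomological degree and are compatible with Yoneda composition and with module structures. By Lemma \ref{rem} it satisfies $\PF\circ\Ibold_0^{(t)}\circ\PF\simeq\Ibold_1^{(t)}$ and $\PF\circ\Ibold_1^{(t)}\circ\PF\simeq\Ibold_0^{(t)}$, so it identifies $\Ext^*_{\Pbold}(\Ibold_0^{(t)},\Ibold_0^{(t)})$ with $\Ext^*_{\Pbold}(\Ibold_1^{(t)},\Ibold_1^{(t)})$ as graded algebras and $\Ext^*_{\Pbold}(\Ibold_0^{(t)},\Ibold_1^{(t)})$ with $\Ext^*_{\Pbold}(\Ibold_1^{(t)},\Ibold_0^{(t)})$ as modules over them, while the transpose symmetry supplied by Kuhn duality (Remark \ref{kuhnduality}, applicable since Frobenius twist commutes with linear duality, so that each $\Ibold_\ell^{(t)}$ is self-dual) gives the remaining comparison. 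This reduces (1)--(3) to a single value of $\ell$, the one treated in \cite{Drupieski}, and justifies writing $e_t^i$ and $c_t$ without an $\ell$ in the notation.

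For (3) the first task is to identify the canonical morphism $\Ibold_\ell^{(t)}\to(S^{p^s})_\ell^{(t-s)}=S^{p^s}\circ\Ibold_\ell^{(t-s)}$. Using the factorisation $\Ibold_\ell^{(t)}\simeq\Ibold_\ell^{(s)}\circ\Ibold_\ell^{(t-s)}$ — a direct check, or a consequence of Theorem \ref{classaddfunc} applied to the additive superfunctor on the left-hand side — this morphism is obtained by applying the exact functor $-\circ\Ibold_\ell^{(t-s)}$ of Corollary \ref{precompaddfunctors} to the classical canonical injection $I^{(s)}\hookrightarrow S^{p^s}$, dual to the surjection $\Gamma^{p^s}\twoheadrightarrow I^{(s)}$ whose super-analogue is the map (\ref{aaa}). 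Granting this identification, the surjectivity of the induced maps on $\Ext^*$ and the description of their kernels as the right $\Ext_{\Pbold}^*(\Ibold_\ell^{(t)},\Ibold_\ell^{(t)})$-submodules generated by $\{e_t^1,\dots,e_t^s\}$, respectively by $\{e_t^1\cdot c_t,\dots,e_t^s\cdot c_t\}$, is exactly Drupieski's Corollary 4.6.5 for one value of $\ell$, transported to the other by the $\PF$-conjugation above.

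The step I expect to require the most care is the bookkeeping around (3): aligning Drupieski's indexing and normalisation of the classes $e_t^i$ and $c_t$ with ours, tracking exactly where $\PF$ is inserted in $(S^{p^s})_\ell^{(t-s)}$ and in the canonical morphism when $\ell=1$, and checking that the $\PF$-conjugation and Kuhn-duality isomorphisms are strictly compatible with Yoneda products, so that the submodule-generator statements in (3) transport correctly. None of this is deep, but it is where an oversight would most easily slip in.
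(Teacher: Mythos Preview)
The paper does not supply a proof of this proposition at all: it is stated as a direct citation of Drupieski's results, with only the subsequent Remark clarifying what the canonical morphism $\Ibold_\ell^{(t)}\to(S^{p^s})_\ell^{(t-s)}$ means. Your proposal is therefore more than the paper offers; you correctly identify the statement as a translation exercise and supply the one genuinely missing step, namely the independence from $\ell$ via the autoequivalence $X\mapsto\PF\circ X\circ\PF$ together with Lemma \ref{rem}, which the paper leaves implicit. Your reading is sound and matches how the paper uses the result downstream.

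One small inconsistency worth tidying: in your discussion of (3) you first write the factorisation as $\Ibold_\ell^{(t)}\simeq\Ibold_\ell^{(s)}\circ\Ibold_\ell^{(t-s)}$ (superfunctor composed with superfunctor), but then describe the canonical morphism as obtained by applying $-\circ\Ibold_\ell^{(t-s)}$ to the \emph{classical} injection $I^{(s)}\hookrightarrow S^{p^s}$, which presupposes instead $\Ibold_\ell^{(t)}\simeq I^{(s)}\circ\Ibold_\ell^{(t-s)}$. Both identifications are correct (since $\Ibold_\ell^{(t-s)}$ takes values concentrated in super-degree $\ell$, and $p$ is odd), and the paper's own Remark uses the second one; but you should pick one and stick with it, as the first does not literally fit the framework of Corollary \ref{precompaddfunctors}.
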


\begin{rem}
	The canonical morphism $\Ibold_0^{(t)}\to (S^{p^s})_0^{(t-s)}$ mentioned in the Proposition is obtained by precomposing with $\Ibold_\ell^{(t-s)}$ the $s$-th power map $I^{(s)}\to S^{p^s}$. In other words, it is defined by $v^{(t)} \mapsto (v^{p^s})^{(t-s)}$. For the same morphism with $\ell = 1$, use the isomorphism $(S^{p^s})_1^{(t-s)} \simeq \PF \circ  (S^{p^s})_0^{(t-s)} \circ \PF$ (which in particular for $s=0$ gives $\Ibold_1^{(t)} \simeq \PF \circ \Ibold_0^{(t)} \circ \PF$).
\end{rem}

The following notation agrees with the procedure described in section \ref{sec-31}, evaluating a strict polynomial functor $F$ on a graded vector space.
\begin{notation}\label{nota-twist-ev}
Given a graded vector space $V$, we denote by $V^{(r)}$ the graded vector space with
\[(V^{(r)})^d=
\begin{cases}
(V^i)^{(r)} &\text{if $d=p^ri$}\\
0 & \text{else}
\end{cases}.
\]
\end{notation}

The following result is a straightforward consequence of Proposition \ref{pr-alg} and notation \ref{nota-twist-ev}.

\begin{cor}
There is an injective morphism of graded algebras
\[\sigma:\Ext_{\Pbold}^*(\Ibold^{(s)}_\ell,\Ibold^{(s)}_\ell)^{(r)}\to \Ext_{\Pbold}^*(\Ibold^{(r+s)}_\ell,\Ibold^{(r+s)}_\ell)\]
given by $\sigma((e_s^i)^{(r)})=e_{s+r}^{i+r}$ for $1\le i\le s$.
\end{cor}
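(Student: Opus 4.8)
The plan is to check that the map $\sigma$ described by the assignment $\sigma((e_s^i)^{(r)}) = e_{s+r}^{i+r}$ is (i) well-defined as an algebra map, and (ii) injective, by comparing the presentations of source and target provided by Proposition~\ref{pr-alg}(1). First I would recall the structure: by Proposition~\ref{pr-alg}(1), $\Ext^*_{\Pbold}(\Ibold^{(s)}_\ell,\Ibold^{(s)}_\ell)$ is the graded commutative algebra generated by $e_s^1,\dots,e_s^s$ with $\deg e_s^i = 2p^{i-1}$ and relations $(e_s^1)^p = \dots = (e_s^{s-1})^p = 0$; similarly for $s+r$ in place of $s$. Applying the Frobenius twist $(-)^{(r)}$ to the source (in the sense of Notation~\ref{nota-twist-ev}, which simply multiplies internal degrees by $p^r$) gives an algebra with the same generators and relations but with $(e_s^i)^{(r)}$ now sitting in degree $2p^{i-1}\cdot p^r = 2p^{i+r-1}$. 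So the source, after twisting, is graded commutative on generators $x_1,\dots,x_s$ with $\deg x_i = 2p^{i+r-1}$ and $x_1^p = \dots = x_{s-1}^p = 0$.

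The key observation is that $e_{s+r}^{i+r}$ (for $1\le i\le s$, i.e.\ for indices running over the top $s$ of the $s+r$ generators of the target) has degree $2p^{(i+r)-1} = 2p^{i+r-1}$, matching $\deg x_i$ exactly. Moreover the relations $(e_{s+r}^{i+r})^p = 0$ hold in the target for $i+r \le (s+r)-1$, i.e.\ for $i \le s-1$, which is precisely the range where the source imposes $x_i^p = 0$. Hence sending $x_i \mapsto e_{s+r}^{i+r}$ respects all defining relations and all internal degrees, so it extends (uniquely) to a morphism of graded algebras $\sigma$. Cohomological degree is also preserved since everything here lives in even cohomological degree and the twist does not change it; one should remark that $\sigma$ is bigraded (preserving both the $\Ext$-degree and the internal weight grading). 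I would spell out in one line that graded-commutativity of both sides makes the universal property of the free graded-commutative algebra on the generators (modulo the $p$-th power relations) applicable.

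For injectivity: the target algebra, as a graded vector space, has a basis of monomials $e_{s+r}^1\cdots$ with the first $s+r-1$ exponents in $\{0,\dots,p-1\}$ and the last exponent $e_{s+r}^{s+r}$ unrestricted (this is the standard monomial basis coming from the presentation). The image of $\sigma$ is spanned by those monomials involving only $e_{s+r}^{r+1},\dots,e_{s+r}^{s+r}$; distinct monomials in the $x_i$'s map to distinct basis monomials of the target, and there is no collapsing because the relation pattern is identical (the $p$-th powers that vanish in the source are exactly the ones that vanish in the target among these generators, and $e_{s+r}^{s+r}$ has no power relation just as $x_s = (e_s^s)^{(r)}$ has none). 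So $\sigma$ carries a basis injectively to part of a basis, hence is injective. The main (very minor) obstacle is purely bookkeeping: making sure the index shift $i \mapsto i+r$ lines up the ``polynomial generator'' $e_s^s$ (no $p$-th power relation) with $e_{s+r}^{s+r}$ (no $p$-th power relation) and the ``truncated'' generators with truncated generators, and that the internal-degree multiplication by $p^r$ from Notation~\ref{nota-twist-ev} matches the exponent shift $p^{i-1}\rightsquigarrow p^{i+r-1}$. There is no real homological content beyond Proposition~\ref{pr-alg}; everything is an identification of explicitly presented graded algebras.
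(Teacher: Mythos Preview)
Your argument is correct and is exactly the unpacking the paper has in mind: the paper states this corollary as ``a straightforward consequence of Proposition~\ref{pr-alg} and Notation~\ref{nota-twist-ev}'' and gives no further proof, and your verification that the assignment $(e_s^i)^{(r)}\mapsto e_{s+r}^{i+r}$ matches degrees, respects the $p$-th power relations (with the top generator $e_s^s$ going to the top generator $e_{s+r}^{s+r}$, neither having a power relation), and sends a monomial basis injectively into a monomial basis, is precisely that straightforward check. One small remark: your aside that ``cohomological degree is also preserved since \dots\ the twist does not change it'' is slightly muddled, since by Notation~\ref{nota-twist-ev} the twist \emph{does} rescale the $\Ext$-degree by $p^r$; but you already handled the degrees correctly earlier in the paragraph, so this does not affect the argument.
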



Let now $F \in \Pcal_{p^r}$. We can define a graded linear map
\[\begin{array}{cccc}
\Psi:& \Ext^*_\Pcal(I^{(r)},F)\otimes \Ext^*_{\Pbold}(\Ibold^{(s)}_\ell,\Ibold^{(s)}_\ell)^{(r)}&\to &\Ext_{\Pbold}(I^{(r)}\circ \Ibold^{(s)}_\ell,F\circ \Ibold^{(s)}_\ell)\;,\\
& x\otimes e^{(r)} & \mapsto & (x\circ \Ibold^{(s)}_\ell)\cdot \sigma(e^{(r)})
\end{array}
\]

\begin{lm}\label{lm-premier-cas}
If  $F$ is injective, then $\Psi$ is an isomorphism.
\end{lm}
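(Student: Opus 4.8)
The plan is to reduce, by a d\'evissage in the variable $F$, to the single case $F=S^{p^{r}}$, and there to read $\Psi$ off directly from Drupieski's computations in Proposition~\ref{pr-alg}. First I would check that $\Psi$ is natural in $F$ (writing $\Psi_F$ for the map attached to $F$). For a morphism $g\colon F\to F'$ in $\Pcal_{p^{r}}$ one has $(g_{*}x)\circ\Ibold_{\ell}^{(s)}=(g\circ\Ibold_{\ell}^{(s)})_{*}(x\circ\Ibold_{\ell}^{(s)})$, because precomposition by $\Ibold_{\ell}^{(s)}$ is an exact functor (Corollary~\ref{precompaddfunctors}); combined with the associativity of the Yoneda product this gives $\Psi_{F'}\circ(g_{*}\otimes\id)=(g\circ\Ibold_{\ell}^{(s)})_{*}\circ\Psi_{F}$. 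Moreover the source and the target of $\Psi$ are both additive in $F$, compatibly with $\Psi$, since $\Ext_{\Pcal}^{*}(I^{(r)},-)$ and $\Ext_{\Pbold}^{*}(\Ibold_{\ell}^{(r+s)},-)$ are additive and $(-)\circ\Ibold_{\ell}^{(s)}$ preserves direct sums. Since $F$ is injective in $\Pcal_{p^{r}}$, it is a direct summand of a finite direct sum $\bigoplus_{i}S^{\mu_{i}}$ of the injective cogenerators $S^{\mu}=S^{\mu_{1}}\otimes\cdots\otimes S^{\mu_{k}}$ with $\mu$ a composition of $p^{r}$; by naturality $\Psi_{F}$ is a retract of $\bigoplus_{i}\Psi_{S^{\mu_{i}}}$, so it is enough to treat $F=S^{\mu}$.

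If $\mu$ has at least two nonzero parts, both sides of $\Psi$ vanish: the source because $S^{\mu}$ is a tensor product of reduced functors and $I^{(r)}$ is additive (Lemma~\ref{pirashvili}), and the target because $S^{\mu}\circ\Ibold_{\ell}^{(s)}=\bigotimes_{j}(S^{\mu_{j}}\circ\Ibold_{\ell}^{(s)})$ is a tensor product of reduced superfunctors while $I^{(r)}\circ\Ibold_{\ell}^{(s)}$ is additive, so Lemma~\ref{pirashvili} applies once more. Thus $\Psi$ is trivially an isomorphism in this case, and we are reduced to $F=S^{p^{r}}$. Here $I^{(r)}\circ\Ibold_{\ell}^{(s)}\simeq\Ibold_{\ell}^{(r+s)}$ (by Theorem~\ref{classaddfunc} and the multiplicity count following it) and $S^{p^{r}}\circ\Ibold_{\ell}^{(s)}=(S^{p^{r}})_{\ell}^{(s)}$ by definition. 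As $S^{p^{r}}$ is injective, $\Ext_{\Pcal}^{*}(I^{(r)},S^{p^{r}})=\Hom_{\Pcal}(I^{(r)},S^{p^{r}})$ is concentrated in degree $0$, and by the Yoneda lemma together with Kuhn duality it is one-dimensional, spanned by the $r$-th power map $x\colon I^{(r)}\to S^{p^{r}}$.

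Set $E:=\Ext_{\Pbold}^{*}(\Ibold_{\ell}^{(r+s)},\Ibold_{\ell}^{(r+s)})$. By Proposition~\ref{pr-alg}(3) (with $t=r+s$), the canonical morphism $\Ibold_{\ell}^{(r+s)}\to(S^{p^{r}})_{\ell}^{(s)}$ induces a surjection $E\to\Ext_{\Pbold}^{*}(\Ibold_{\ell}^{(r+s)},(S^{p^{r}})_{\ell}^{(s)})$, by postcomposition, whose kernel is the ideal $(e_{r+s}^{1},\dots,e_{r+s}^{r})$ of the graded commutative algebra $E$; and by the Remark following Proposition~\ref{pr-alg} this canonical morphism is precisely $x\circ\Ibold_{\ell}^{(s)}$, that is, the class of $1\in E$ under the quotient. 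The composite $\overline{\sigma}$ of $\sigma$ with the quotient map carries $(e_{s}^{i})^{(r)}$ to $e_{r+s}^{i+r}$, and using the presentation of $E$ in Proposition~\ref{pr-alg}(1) one checks that $\overline{\sigma}$ matches the generators and relations of $\Ext_{\Pbold}^{*}(\Ibold_{\ell}^{(s)},\Ibold_{\ell}^{(s)})^{(r)}$ with those of $E/(e_{r+s}^{1},\dots,e_{r+s}^{r})$, so it is an isomorphism of graded algebras. Finally, for every $e$,
\[
\Psi(x\otimes e^{(r)})=(x\circ\Ibold_{\ell}^{(s)})\cdot\sigma(e^{(r)})=\overline{\sigma}(e^{(r)}),
\]
the last equality because the right $E$-action on the quotient is multiplication and $x\circ\Ibold_{\ell}^{(s)}$ is the class of $1$. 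Since $x$ spans $\Ext_{\Pcal}^{*}(I^{(r)},S^{p^{r}})$, this exhibits $\Psi$ as the isomorphism $\overline{\sigma}$, and the proof is complete.

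The step I expect to be the main obstacle is the bookkeeping of this last paragraph: verifying that $x\circ\Ibold_{\ell}^{(s)}$ is literally the canonical morphism of Proposition~\ref{pr-alg}(3) — equivalently, that the $r$-th power map $I^{(r)}\to S^{p^{r}}$ is compatible with the Frobenius twists in the expected way — and that $\overline{\sigma}$ genuinely matches the two algebra presentations. By comparison, the naturality of $\Psi$ in $F$ underlying the d\'evissage is routine, though it has to be spelled out.
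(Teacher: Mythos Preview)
Your proof is correct and follows essentially the same route as the paper: reduce to the injective cogenerators $S^{\mu}$, kill all $\mu$ with more than one part by Pirashvili's lemma on both sides, and in the remaining case $F=S^{p^{r}}$ read off the isomorphism from Proposition~\ref{pr-alg}(3). You spell out more than the paper does---the naturality and additivity of $\Psi$ in $F$, and the explicit matching of the two algebra presentations showing $\overline{\sigma}$ is an isomorphism---but these are elaborations of steps the paper takes for granted rather than a different strategy.
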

\begin{proof}
The symmetric tensors $S^\mu := S^{\mu_1} \otimes \dots \otimes S^{\mu_n}$, for all partitions $\mu = (\mu_1, \dots, \mu_n)$ of $p^s$ and all $n\ge1$, form a set of injective cogenerators of $\Pcal_{p^s}$, so it suffices to check that $\Psi$ is an isomorphism if $F=S^\mu$.
If $\mu=(p^s)$, then $\Ext_{\Pcal}^*(I^{(r)},S^{p^r})$ is zero in positive degrees and one-dimensional in degree $0$. So checking that $\Psi$ is an isomorphism reduces to checking that the map 
\[\Ext_{\Pbold}^*(\Ibold_\ell^{(s)}, \Ibold_\ell^{(s)})^{(r)}\to \Ext_{\Pbold}^*(\Ibold_\ell^{(r+s)}, (S^{p^r})_\ell^{(s)}) \]
induced by $\sigma$ and the canonical map $\Ibold_\ell^{(r+s)}\to  (S^{p^s})_\ell^{(r)}$ is an isomorphism. This follows directly from Proposition \ref{pr-alg}(3). 
If $\mu\ne (p^s)$ then $S^\mu$ and $S^\mu\circ \Ibold^{(s)}_\ell$ are both tensor products of reduced functors. Hence the source and the target of $\Psi$ are zero by Lemma \ref{pirashvili}, so that $\Psi$ is an isomorphism.
\end{proof}

We now come to the main result of this subsection.

\begin{prop}\label{Psiiso}
For all $F$, the map $\Psi$ is an isomorphism.
\end{prop}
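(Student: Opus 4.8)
The plan is to deduce Proposition \ref{Psiiso} from Lemma \ref{lm-premier-cas} by a dimension-shifting (resolution) argument in the functor category $\Pcal_{p^r}$, using the fact that $\Psi$ is natural in $F$ and that both sides of the purported isomorphism are well-behaved cohomological functors of $F$. Concretely, I would first record that for fixed $r,s,\ell$ the assignment $F\mapsto \Psi_F$ is a morphism between two functors $\Pcal_{p^r}\to (\text{graded vector spaces})$, namely
\[
F\ \longmapsto\ \Ext^*_\Pcal(I^{(r)},F)\otimes \Ext^*_{\Pbold}(\Ibold^{(s)}_\ell,\Ibold^{(s)}_\ell)^{(r)}
\qquad\text{and}\qquad
F\ \longmapsto\ \Ext^*_{\Pbold}(I^{(r)}\circ \Ibold^{(s)}_\ell,F\circ \Ibold^{(s)}_\ell)\;,
\]
and that this morphism is compatible with the connecting maps coming from a short exact sequence of functors. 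For the source this is clear because $-\otimes \Ext^*_{\Pbold}(\Ibold^{(s)}_\ell,\Ibold^{(s)}_\ell)^{(r)}$ is exact (we are over a field) and $\Ext^*_\Pcal(I^{(r)},-)$ is a cohomological $\delta$-functor; for the target one uses that precomposition by the additive superfunctor $\Ibold^{(s)}_\ell$ is exact (Corollary \ref{precompaddfunctors}), so that a short exact sequence $0\to F'\to F\to F''\to 0$ in $\Pcal_{p^r}$ yields a short exact sequence $0\to F'\circ \Ibold^{(s)}_\ell\to F\circ \Ibold^{(s)}_\ell\to F''\circ \Ibold^{(s)}_\ell\to 0$ in $(\Pbold_{p^{r+s}})_{ev}$, hence a long exact $\Ext$-sequence; compatibility of $\Psi$ with the connecting homomorphisms follows from the definition of $\Psi$ via Yoneda composition with the fixed classes $x\circ \Ibold^{(s)}_\ell$ and $\sigma(e^{(r)})$, since Yoneda product commutes with connecting maps.

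Granting this, the argument is the standard "five lemma up an injective coresolution" one. Since $\Pcal_{p^r}$ has enough injectives (the $S^\mu$ being injective cogenerators), choose a finite injective coresolution $0\to F\to J^0\to J^1\to\cdots\to J^N\to 0$ of an arbitrary $F$ (finiteness because $\Pcal_{p^r}$ has finite global dimension, or simply truncate and induct). Break it into short exact sequences, apply the two $\delta$-functors above, and compare the two long exact sequences via the natural transformation $\Psi$. By Lemma \ref{lm-premier-cas}, $\Psi_{J^i}$ is an isomorphism for every $i$ since each $J^i$ is injective (a product of $S^\mu$'s). A descending induction on the length of the resolution, using the five lemma at each stage, then forces $\Psi_F$ to be an isomorphism in every degree. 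Alternatively, and perhaps more cleanly, one can phrase this as: the class of $F$ for which $\Psi_F$ is an isomorphism in all degrees is closed under taking kernels of maps between its members (by the five lemma applied to the two long exact sequences) and contains all injectives; since every $F$ embeds as the kernel of a map between injectives in a finite coresolution, every $F$ is in the class.

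I would then also remark briefly on naturality: the isomorphism so obtained is natural in $F$ because $\Psi$ was defined naturally, and it is natural in the remaining data for the same reason, which is needed downstream. The main obstacle I anticipate is not the five-lemma bookkeeping but the verification that $\Psi$ is genuinely a map of $\delta$-functors — i.e. that it strictly commutes (not merely up to sign or up to a scalar) with the connecting homomorphisms attached to a short exact sequence in $\Pcal_{p^r}$. This requires unwinding the definition $\Psi(x\otimes e^{(r)})=(x\circ \Ibold^{(s)}_\ell)\cdot\sigma(e^{(r)})$ and checking that the Yoneda product on the right is compatible with the boundary maps, together with the fact that the functor $-\circ\Ibold^{(s)}_\ell$ sends the chosen short exact sequence and its associated extension class correctly; the sign conventions in the graded-commutative/super setting are the place where care is needed. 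Once that compatibility is in hand, the proof is formal.
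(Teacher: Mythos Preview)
Your approach is correct and shares the same core idea as the paper's proof: reduce to the injective case (Lemma~\ref{lm-premier-cas}) via an injective coresolution of $F$. The paper, however, packages this differently. Rather than verifying directly that $\Psi$ is a morphism of $\delta$-functors in $F$ --- the point you rightly flag as the main obstacle --- it lifts $\Psi$ to a morphism of bicomplexes by choosing explicit cocycle representatives for the classes $\sigma(b)$ in a complex $\Hom_{\Pbold}(P_*,\Ibold_\ell^{(r+s)})$, and then compares the first-quadrant spectral sequences of the source and target bicomplexes. On the $E_1$-page the induced map is precisely $\Psi$ applied columnwise to the injectives $J^m$, hence an isomorphism by Lemma~\ref{lm-premier-cas}, and the result follows. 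The payoff of the paper's route is that compatibility with differentials is automatic once one works at the chain level, so the sign bookkeeping you anticipate never has to be made explicit; the payoff of yours is that it is more elementary (five lemma plus the finite global dimension of $\Pcal_{p^r}$) and avoids setting up the bicomplex machinery.
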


\begin{proof}
We are going to prove that $\Psi$ is an isomorphism by a spectral sequence argument. 

As a first step, we construct a lifting of our map $\Psi$ on the level of chain (bi)complexes. To be more specific,
we choose an injective coresolution $J^*$ of $F$, a projective resolution $P_*$ of $I^{(r)}\circ\Ibold^{(s)}_\ell$, and we consider the bicomplexes (the bicomplex $B^{m,n}$ has trivial vertical differentials)
\begin{align*}
&B^{m,n}=\Hom_\Pcal(I^{(r)},J^m)\otimes \Ext^n_{\Pbold}(\Ibold^{(s)}_\ell,\Ibold^{(s)}_\ell)^{(r)}\;,\\
&C^{m,n}= \Hom_\Pcal(I^{(r)},J^m)\otimes \Hom_{\Pbold}(P_n,\Ibold^{(r+s)}_\ell)\;,\\
&D^{m,n}=\Hom_{\Pbold}(P_n,J^m\circ\Ibold^{(r)}_\ell)\;.
\end{align*}
The homology of the total complexes of these bicomplexes are given by
\begin{align*}
&H^*(\Tot B)=\Ext^*_\Pcal(I^{(r)},F)\otimes \Ext^*_{\Pbold}(\Ibold^{(s)}_\ell,\Ibold^{(s)}_\ell)^{(r)}\;,\\
&H^*(\Tot C)=\Ext^*_\Pcal(I^{(r)},F)\otimes \Ext_{\Pbold}(\Ibold^{(r+s)}_\ell,\Ibold^{(r+s)}_\ell)\;,\\
&H^*(\Tot D)=\Ext^*_{\Pbold}(I^{(r)}\circ \Ibold^{(s)}_\ell,F\circ\Ibold^{(s)}_\ell)\;.
\end{align*}
We choose a basis of the graded space $\Ext^*_{\Pbold}(\Ibold^{(s)}_\ell,\Ibold^{(s)}_\ell)$ and for each element $b$ of this basis we choose a cocyle $z(b)$ representing $\sigma(b)$ in the complex $\Hom_{\Pbold}(P_*,\Ibold^{(r+s)}_\ell)$. This induces a morphism of bicomplexes
\[z:B^{m,n}\to C^{m,n}\] 
such that $H^*(\Tot z):H^*(\Tot B)\to H^*(\Tot C)$ is equal to $\id\otimes\sigma$. 
We also define a morphism of bicomplexes 
$\phi:C^{m,n}\to D^{m,n}$ as the composition
\[C^{m,n}\to \Hom_{\Pbold}(I^{(r)}\circ \Ibold_\ell^{(s)},J^m\circ \Ibold_\ell^{(s)})\otimes \Hom_{\Pbold}(P_n,\Ibold^{(r+s)}_\ell)\to D^{m,n}\]
where the first map is induced by precomposition by the functor $\Ibold_\ell^{(s)}$ and the second one by the composition of morphisms in $\Pbold$. Then the map $H^*(\Tot \phi)$ sends $x\otimes e^{(r)}$ to $(x\circ \Ibold^{(s)}_\ell)\cdot e^{(r)}$. Thus $H^*(\Tot (\phi\circ z))$ is equal to $\Psi$.

As a second step, we consider the first quadrant spectral sequences associated to the bicomplexes $B^{m,n}$ and $D^{m,n}$:
\begin{align*}
&E^{m,n}_1(B)=\Hom_\Pcal(I^{(r)},J^m)\otimes \Ext^n_{\Pbold}(\Ibold^{(s)}_\ell,\Ibold^{(s)}_\ell)^{(r)}\Rightarrow H^{m+n}(\Tot B)\;.
\\
&E^{m,n}_1(D)=\Ext^n_{\Pbold}(I^{(r)}\circ \Ibold^{(s)}_\ell,J^m\circ\Ibold^{(s)}_\ell)\Rightarrow H^{m+n}(\Tot D)\;.
\end{align*}
The morphism of bicomplexes $\phi\circ z$ induces a morphism of spectral sequences, which is equal to $\Psi$ on the abutment. Thus to prove that $\Psi$ is an isomorphism, it suffices to prove that $E_\infty^{m,n}(\phi\circ z)$ is an isomorphism.

By construction, $E^{m,*}_1(z\circ\phi)=\Psi$ on each column of index $m$, hence $E^{m,n}_1(z~\circ~\phi)$ is an isomorphism by Lemma \ref{lm-premier-cas}. Therefore $E^{m,n}_1(z\circ\phi)$ is an isomorphism on all pages, and we conclude that $E_\infty^{m,n}(\phi\circ z)$, hence $\Psi$, is an isomorphism.
\end{proof}

\subsection{The case $A=\Ibold^{(r)}_\ell$ and $B=\Ibold^{(r)}_{\overline{\ell}}$}\label{secsigmaprime}


We are going to slightly modify the construction made in Section 4.1. Recall from the beginning of the section that $\Ext_{\Pbold}^*(\Ibold^{(r+s)}_\ell,\Ibold^{(r+s)}_{\overline{\ell}})$ is a right $\Ext_{\Pbold}^*(\Ibold^{(r+s)}_\ell,\Ibold^{(r+s)}_\ell)$-module. In particular it inherits via $\sigma$ the structure of $\Ext_{\Pbold}^*(\Ibold^{(s)}_\ell,\Ibold^{(s)}_\ell)^{(r)}$-module. Then Proposition \ref{pr-alg}(2) implies directly the following:

\begin{prop}
There is an injective morphism of graded $\Ext_{\Pbold}^*(\Ibold^{(s)}_\ell,\Ibold^{(s)}_\ell)^{(r)}$-modules
\[\sigma':\Ext^*_{\Pbold}(\Ibold^{(s)}_\ell,\Ibold^{(s)}_{\overline{\ell}})^{(r)}\to \Ext_{\Pbold}^*(\Ibold^{(r+s)}_\ell,\Ibold^{(r+s)}_{\overline{\ell}})\]
given by $\sigma'(c_s^{(r)})=c_{s+r}$.
\end{prop}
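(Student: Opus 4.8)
The plan is to mirror, almost verbatim, the construction that produced the morphism $\sigma$ in the previous subsection, but now applied to the bimodule $\Ext^*_{\Pbold}(\Ibold^{(t)}_\ell,\Ibold^{(t)}_{\overline\ell})$ instead of the algebra $\Ext^*_{\Pbold}(\Ibold^{(t)}_\ell,\Ibold^{(t)}_\ell)$. Concretely, I would first recall that by Proposition \ref{pr-alg}(2) the source $\Ext^*_{\Pbold}(\Ibold^{(s)}_\ell,\Ibold^{(s)}_{\overline\ell})$ is free of rank one over $\Ext^*_{\Pbold}(\Ibold^{(s)}_\ell,\Ibold^{(s)}_\ell)$ on the generator $c_s$ in degree $p^s$, and likewise the target $\Ext^*_{\Pbold}(\Ibold^{(r+s)}_\ell,\Ibold^{(r+s)}_{\overline\ell})$ is free of rank one over $\Ext^*_{\Pbold}(\Ibold^{(r+s)}_\ell,\Ibold^{(r+s)}_\ell)$ on $c_{r+s}$ in degree $p^{r+s}$. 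Applying the Frobenius-twist functor $(-)^{(r)}$ to the source multiplies internal degrees by $p^r$, so $c_s^{(r)}$ sits in degree $p^{r+s}$, matching the degree of $c_{r+s}$.

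Next I would define $\sigma'$ to be the unique $\Ext^*_{\Pbold}(\Ibold^{(s)}_\ell,\Ibold^{(s)}_\ell)^{(r)}$-linear map sending $c_s^{(r)}\mapsto c_{r+s}$, where the target is regarded as an $\Ext^*_{\Pbold}(\Ibold^{(s)}_\ell,\Ibold^{(s)}_\ell)^{(r)}$-module by restriction of scalars along the algebra map $\sigma$. Since the source is free of rank one over $\Ext^*_{\Pbold}(\Ibold^{(s)}_\ell,\Ibold^{(s)}_\ell)^{(r)}$ (freeness is preserved by the twist) and the generator is sent to a generator of a free rank-one module, the map is well-defined and automatically a morphism of graded modules of degree $0$. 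For injectivity: an element of the source is $a\cdot c_s^{(r)}$ with $a\in \Ext^*_{\Pbold}(\Ibold^{(s)}_\ell,\Ibold^{(s)}_\ell)^{(r)}$, and $\sigma'(a\cdot c_s^{(r)})=\sigma(a)\cdot c_{r+s}$; since $c_{r+s}$ generates a free module over $\Ext^*_{\Pbold}(\Ibold^{(r+s)}_\ell,\Ibold^{(r+s)}_\ell)$, this vanishes iff $\sigma(a)=0$, which by injectivity of $\sigma$ (the Corollary preceding Lemma \ref{lm-premier-cas}) forces $a=0$.

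The one point requiring genuine care — and the main (mild) obstacle — is checking that $\sigma'$ is indeed a morphism \emph{of modules over} $\Ext^*_{\Pbold}(\Ibold^{(s)}_\ell,\Ibold^{(s)}_\ell)^{(r)}$ rather than just a graded linear map: one must verify that the module structure on the source obtained by applying $(-)^{(r)}$ to the $\Ext^*_{\Pbold}(\Ibold^{(s)}_\ell,\Ibold^{(s)}_\ell)$-action on $\Ext^*_{\Pbold}(\Ibold^{(s)}_\ell,\Ibold^{(s)}_{\overline\ell})$ is compatible, under $\sigma$ and $\sigma'$, with the restricted action on the target. This is where one uses that Yoneda composition is compatible with the Frobenius twist and that $\sigma$ was itself built out of Yoneda products of the $e^i_t$'s; it reduces to the single identity $\sigma'(b\cdot c_s^{(r)})=\sigma(b)\cdot c_{r+s}$ for $b$ a twisted basis element, which holds by the very definition of $\sigma'$ on the free generator. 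I do not expect anything deeper; the statement is a formal consequence of Proposition \ref{pr-alg}(2) together with the already-established properties of $\sigma$.
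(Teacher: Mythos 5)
Your proposal is correct and is exactly the argument the paper intends: the paper simply observes that the target becomes an $\Ext_{\Pbold}^*(\Ibold^{(s)}_\ell,\Ibold^{(s)}_\ell)^{(r)}$-module by restriction along $\sigma$ and then declares the statement a direct consequence of Proposition \ref{pr-alg}(2), which is precisely the freeness-of-rank-one argument (degree match of $c_s^{(r)}$ and $c_{r+s}$, definition on the free generator, injectivity from injectivity of $\sigma$) that you spell out.
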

%
%
Given $F \in \Pbold_{p^r}$, as in the previous section we define a graded linear map
\[\begin{array}{cccc}
\Psi':& \Ext^*_\Pcal(I^{(r)},F)\otimes \Ext^*_{\Pbold}(\Ibold^{(s)}_\ell,\Ibold^{(s)}_{\overline{\ell}})^{(r)}&\to &\Ext_{\Pbold}(I^{(r)}\circ \Ibold^{(s)}_\ell,F\circ \Ibold^{(s)}_{\overline{\ell}})\;,\\
& x\otimes e^{(r)} & \mapsto & (x\circ \Ibold^{(s)}_{\overline{\ell}})\cdot \sigma'(e^{(r)})
\end{array}
\]


\begin{prop}\label{Psiprimeiso}
For all $F$, the map $\Psi'$ is an isomorphism. 
\end{prop}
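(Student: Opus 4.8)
The plan is to mimic the proof of Proposition \ref{Psiiso} almost verbatim, replacing $\Ibold_\ell^{(s)}$ by $\Ibold_{\overline\ell}^{(s)}$ in the second argument throughout and $\sigma$ by $\sigma'$ where appropriate. First I would establish the analogue of Lemma \ref{lm-premier-cas}: if $F$ is injective, then $\Psi'$ is an isomorphism. As before it suffices, by the fact that the symmetric tensors $S^\mu$ ($\mu\vdash p^s$) form a set of injective cogenerators of $\Pcal_{p^r}$, to check this for $F=S^\mu$. If $\mu\ne(p^s)$ then $S^\mu$ and $S^\mu\circ\Ibold_{\overline\ell}^{(s)}$ are tensor products of reduced functors, so both source and target of $\Psi'$ vanish by Pirashvili's Lemma \ref{pirashvili}. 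If $\mu=(p^s)$, then $\Ext^*_\Pcal(I^{(r)},S^{p^r})$ is one-dimensional concentrated in degree $0$, so one must check that the map
\[
\Ext^*_{\Pbold}(\Ibold_\ell^{(s)},\Ibold_{\overline\ell}^{(s)})^{(r)}\longrightarrow \Ext^*_{\Pbold}(\Ibold_\ell^{(r+s)},(S^{p^r})_{\overline\ell}^{(s)})
\]
induced by $\sigma'$ and the canonical map $\Ibold_\ell^{(r+s)}\to (S^{p^s})_\ell^{(r)}$ (composed with the parity interplay $\Ibold_\ell^{(r+s)}\to\Ibold_{\overline\ell}^{(r+s)}$, i.e. multiplication by $c_{r+s}$) is an isomorphism. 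This is exactly the content of Proposition \ref{pr-alg}(3), second displayed surjection: its kernel is the submodule generated by $\{e_{r+s}^1\cdot c_{r+s},\dots,e_{r+s}^r\cdot c_{r+s}\}$, and $\sigma'$ identifies the source $\Ext^*_{\Pbold}(\Ibold_\ell^{(s)},\Ibold_{\overline\ell}^{(s)})^{(r)}$, a free module of rank one over $\Ext^*_{\Pbold}(\Ibold_\ell^{(s)},\Ibold_\ell^{(s)})^{(r)}$ on $c_s^{(r)}$, with the quotient, a free module of rank one over $\Ext^*_{\Pbold}(\Ibold_\ell^{(r+s)},\Ibold_\ell^{(r+s)})/(e_{r+s}^1,\dots,e_{r+s}^r)$ on the image of $c_{r+s}$, compatibly with the $\sigma$-isomorphism $\Ext^*_{\Pbold}(\Ibold_\ell^{(s)},\Ibold_\ell^{(s)})^{(r)}\xrightarrow{\sim}\Ext^*_{\Pbold}(\Ibold_\ell^{(r+s)},\Ibold_\ell^{(r+s)})/(e_{r+s}^1,\dots,e_{r+s}^r)$ already used in Lemma \ref{lm-premier-cas}.

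Second, I would run the same spectral sequence argument as in Proposition \ref{Psiiso}. Choose an injective coresolution $J^*$ of $F$ and a projective resolution $P_*$ of $I^{(r)}\circ\Ibold_\ell^{(s)}$, and form the bicomplexes
\[
B'^{m,n}=\Hom_\Pcal(I^{(r)},J^m)\otimes\Ext^n_{\Pbold}(\Ibold_\ell^{(s)},\Ibold_{\overline\ell}^{(s)})^{(r)},\quad
C'^{m,n}=\Hom_\Pcal(I^{(r)},J^m)\otimes\Hom_{\Pbold}(P_n,\Ibold_{\overline\ell}^{(r+s)}),
\]
\[
D'^{m,n}=\Hom_{\Pbold}(P_n,J^m\circ\Ibold_{\overline\ell}^{(s)}).
\]
Here I must check that $H^*(\Tot D')=\Ext^*_{\Pbold}(I^{(r)}\circ\Ibold_\ell^{(s)},F\circ\Ibold_{\overline\ell}^{(s)})$; this uses that $-\circ\Ibold_{\overline\ell}^{(s)}$ is exact (Corollary \ref{precompaddfunctors}) so that $J^*\circ\Ibold_{\overline\ell}^{(s)}$ is an injective coresolution of $F\circ\Ibold_{\overline\ell}^{(s)}$ in $(\Pbold)_{ev}$ — exactly as in the proof of Proposition \ref{Psiiso}. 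Choosing cocycles $z(b)$ in $\Hom_{\Pbold}(P_*,\Ibold_{\overline\ell}^{(r+s)})$ representing $\sigma'(b)$ for $b$ in a basis of $\Ext^*_{\Pbold}(\Ibold_\ell^{(s)},\Ibold_{\overline\ell}^{(s)})$ gives a morphism of bicomplexes $z:B'\to C'$ with $H^*(\Tot z)=\id\otimes\sigma'$; composing with the morphism $\phi:C'\to D'$ built from precomposition by $\Ibold_\ell^{(s)}$ (in the source slot) and composition of morphisms in $\Pbold$ yields $H^*(\Tot(\phi\circ z))=\Psi'$. Passing to the first-quadrant spectral sequences $E^{m,n}_1(B')=\Hom_\Pcal(I^{(r)},J^m)\otimes\Ext^n_{\Pbold}(\Ibold_\ell^{(s)},\Ibold_{\overline\ell}^{(s)})^{(r)}$ and $E^{m,n}_1(D')=\Ext^n_{\Pbold}(I^{(r)}\circ\Ibold_\ell^{(s)},J^m\circ\Ibold_{\overline\ell}^{(s)})$, the map $E^{m,*}_1(\phi\circ z)$ is $\Psi'$ for the injective functor $J^m$, hence an isomorphism by the first step; therefore $\phi\circ z$ is an isomorphism on all pages and on the abutment, so $\Psi'$ is an isomorphism.

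The only genuinely new point compared with Proposition \ref{Psiiso} — and the step I expect to require the most care — is the identification in the $\mu=(p^s)$ case: one must be sure that $\sigma'$, which was defined only as a morphism of $\Ext^*_{\Pbold}(\Ibold_\ell^{(s)},\Ibold_\ell^{(s)})^{(r)}$-modules, is compatible with the canonical quotient map of Proposition \ref{pr-alg}(3) so that it descends to the claimed isomorphism onto $\Ext^*_{\Pbold}(\Ibold_\ell^{(r+s)},(S^{p^r})_{\overline\ell}^{(s)})$. Concretely, one checks on the module generator $c_s^{(r)}\mapsto c_{r+s}$ that the image $c_{r+s}$ is not killed by the quotient (indeed the kernel of the second surjection of Proposition \ref{pr-alg}(3) is generated by $e_{r+s}^1\cdot c_{r+s},\dots,e_{r+s}^r\cdot c_{r+s}$, none of which is a unit multiple of $c_{r+s}$), and that freeness of both sides over the respective rings (one of which is $\Ext^*_{\Pbold}(\Ibold_\ell^{(s)},\Ibold_\ell^{(s)})^{(r)}$ via $\sigma$, the other its image in the quotient) then forces bijectivity. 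Everything else is a transcription, with $\overline\ell$ in place of $\ell$, of Section \ref{secsigma}.
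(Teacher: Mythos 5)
Your proposal is correct and follows essentially the same route as the paper: reduce to the injective cogenerators $S^\mu$ (Pirashvili for $\mu\ne(p^r)$, Proposition \ref{pr-alg}(2)--(3) for $\mu=(p^r)$, where your freeness argument identifying the source with the quotient module generated by the image of $c_{r+s}$ is exactly the intended content), then rerun the three-bicomplex spectral-sequence comparison with $\overline{\ell}$ in the second slot and $\sigma'$ in place of $\sigma$. The only discrepancies are inessential index slips ($p^s$ versus $p^r$ for the degree of $\mu$) already present in the paper's own wording.
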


\begin{proof}
	The proof is conceptually similar to the one of Proposition \ref{Psiiso}. When $F=S^\mu$ is an injective cogenerator, the assertion follows in the same way by Proposition \ref{pr-alg}(3) if $\mu=(p^r)$ and from Lemma \ref{pirashvili} otherwise. 
	
	For an arbitrary $F$, take the very same $P_*$ and $J^*$ and form bicomplexes 
	\begin{align*}
	&B^{m,n}=\Hom_\Pcal(I^{(r)},J^m)\otimes \Ext^n_{\Pbold}(\Ibold^{(s)}_\ell,\Ibold^{(s)}_{\overline{\ell}})^{(r)}\;,\\
	&C^{m,n}= \Hom_\Pcal(I^{(r)},J^m)\otimes \Hom_{\Pbold}(P_n,\Ibold^{(r+s)}_{\overline{\ell}})\;,\\
	&D^{m,n}=\Hom_{\Pbold}(P_n,J^m\circ\Ibold^{(r)}_{\overline{\ell}})\;.
	\end{align*}
	Take a basis of $\Ext_{\Pbold}(\Ibold_{\ell}^{(s)}, \Ibold_{\overline{\ell}}^{(s)})$ and to any element $b$ of this basis associate a cocycle $z'(b)$ representing $\sigma'(b)$ in $\Hom_{\Pbold}(P_n,\Ibold^{(r+s)}_{\overline{\ell}})$. It induces a morphism of bicomplexes $z' : B^{m,n} \rightarrow C^{m,n}$ such that $H^*(\Tot z')=id \otimes \sigma'$. Then define a morphism $\varphi'$ as in the previous proof, replacing $\ell$ by $\overline{\ell}$ everywhere in the composition. The map $H^*(\Tot \varphi')$ sends $x \otimes e^{(r)}$ to $(x \circ \Ibold_{\overline{\ell}}^{(s)}) \cdot e^{(r)}$, so $H^*(\Tot(\varphi' \circ z'))$ is equal to $\Psi'$. In the second step, the spectral sequence associated to these new $B^{*,*}$ and $D^{*,*}$ are exactly the source and the target of $\Psi'$. The rest of the proof is a  word-for-word repetition.
\end{proof}

%

\subsection{The case $A=\PF \circ \Ibold_\ell^{(r)}$ and $B=\Ibold_\ell^{(r)}$}\label{sectau}

We use an easy lemma to get rid of the postcomposition by $\PF$:

\begin{lm}\label{extwithpf}
 For all superfunctors $F,G$ there is an odd isomorphism
   \[\Ext_{\Pbold}^*(\PF \circ F, G) \simeq \Ext_{\Pbold}^*(F, G)\] 
   given by Yoneda composition with the odd morphism $\pi_F \in \Hom_{\Pbold}(\PF \circ F, F)$ induced by parity change.
\end{lm}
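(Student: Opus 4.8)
The plan is to exhibit the claimed isomorphism as Yoneda composition with a fixed odd morphism $\pi_F \colon \PF \circ F \to F$, and to check it is an isomorphism by producing an explicit inverse.

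First I would pin down the morphism $\pi_F$. For each $V$, the parity change supplies mutually inverse odd maps $\pi_V \colon F(V) \to \PF(F(V))$ and $\pi'_V \colon \PF(F(V)) \to F(V)$ (these are the underlying identity of $F(V)$ with flipped grading; note $\PF(F(V))$ is exactly $(\PF\circ F)(V)$). I would verify that $\pi' = \{\pi'_V\}$ assembles into a natural transformation $\PF \circ F \to F$ in $\Pbold_{ev}$ in the graded sense: for $\varphi \in \Gammabold^d\Hom(V,W)$ one needs $\pi'_W \circ (\PF\circ F)(\varphi) = (-1)^{\overline{\varphi}}\, F(\varphi)\circ \pi'_V$, which follows from the definition $(\PF\circ F)(\varphi) = (-1)^{\overline{F(\varphi)}}F(\varphi)$ read through the identity-map identification, i.e. this is the degree-$1$ instance of the sign rule for $\PF$ already recorded in the excerpt. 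Call this odd natural transformation $\pi_F$; it is an isomorphism in $\Pbold$ (though not in $\Pbold_{ev}$) with inverse the analogously defined $\pi''_F \colon F \to \PF\circ F$.

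Next, Yoneda composition with $\pi_F$ gives, for every $m$, a map $(\pi_F)_* \colon \Ext^m_{\Pbold}(F,G) \to \Ext^m_{\Pbold}(\PF\circ F, G)$, which raises parity by one since $\pi_F$ is odd. Here I would be careful about what $\Ext^*_{\Pbold}$ means: per the excerpt it is defined as $\Ext^*_{\Pbold_{ev}}(-,G)\oplus \Ext^*_{\Pbold_{ev}}(\PF\circ -, G)$. So the cleanest route is to work inside the abelian category $\Pbold_{ev}$, where $\pi_F$ and $\pi''_F$ are honest morphisms after tensoring the source with a rank-one odd space (equivalently: $\pi_F$ is an even morphism $(\PF\circ F)$-to-$(\PF\circ F)$... ), or more simply to invoke the standard fact that Yoneda composition with a morphism that is an isomorphism in the ambient (non-abelian) category $\Pbold$ induces an isomorphism on $\Ext^*_{\Pbold}$, because the two-sided inverse $\pi''_F$ provides a two-sided inverse $(\pi''_F)_*$ by functoriality and associativity of Yoneda composition: $(\pi''_F)_* \circ (\pi_F)_* = (\pi_F \cdot \pi''_F)_* = (\mathrm{id}_F)_* = \mathrm{id}$, and symmetrically. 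The parity bookkeeping then shows the resulting isomorphism $\Ext^*_{\Pbold}(\PF\circ F,G)\simeq \Ext^*_{\Pbold}(F,G)$ is odd, as claimed.

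The main obstacle is purely the sign/parity bookkeeping: one must make sure that "Yoneda composition with an odd morphism" is well defined on the $\Zdz$-graded $\Ext$ of the excerpt (which is built from the abelian $\Pbold_{ev}$, not from $\Pbold$ itself), and that $\pi_F \cdot \pi''_F = \mathrm{id}$ with no stray sign. Once the identification of $\PF\circ F$ with $F$ "with shifted parity" is set up correctly — using that $\PF$ is an exact autoequivalence of $\Pbold$ with $\PF\circ\PF \simeq \Ibold$, so that $-\circ$ nothing is disturbed and projective resolutions transport — the verification that $(\pi_F)_*$ and $(\pi''_F)_*$ are mutually inverse is a formal consequence of the functoriality of $\Ext^*_{\Pbold}(-,G)$ and the associativity of Yoneda products, with the degree shift by $1$ tracked throughout. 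No deeper input (in particular nothing from the classification theorem or Drupieski's computations) is needed.
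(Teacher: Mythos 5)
Your proposal is correct and follows essentially the same route as the paper: both identify the odd natural isomorphism $\pi_F\colon \PF\circ F\to F$ induced by parity change and observe that (pre)composition with it, being invertible, induces an odd isomorphism first on $\Hom_{\Pbold}$ and hence on $\Ext^*_{\Pbold}$. Your extra care about the parity bookkeeping (that $\Ext^*_{\Pbold}$ is built from $\Pbold_{ev}$ and that $\PF\circ\PF\simeq\Ibold$ makes the two summands swap) is exactly what the paper's phrase \say{follows then easily at the $\Ext$ level} is implicitly relying on.
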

\begin{proof}
Let $\pi : \Ibold \rightarrow \PF$ be the parity change morphism (namely, the odd natural transformation that acts as the identity on objects) and set $\pi_F := \pi \circ F : \PF \circ F \rightarrow F$. Precomposition by $\pi_F$ yields an odd isomorphism
\[\Hom_{\Pbold}(F, G) \simeq \Hom_{\Pbold}(\PF \circ F, G). \]
The isomorphism follows then easily at the $\Ext$ level.
\end{proof}

In our case $F=\Ibold_{\ell}^{(s)}$ we denote $\pi_F$ simply by $\pi_s$.
Since $\PF$ commutes with~$I^{(r)}$, $\Ext_{\Pbold}^*(I^{(r)} \circ \PF \circ \Ibold^{(s)}_\ell,\Ibold^{(r+s)}_\ell)$ is isomorphic to $\Ext_{\Pbold}^*(\PF \circ \Ibold_\ell^{(r+s)},\Ibold^{(r+s)}_\ell)$. For simplicity reasons, we will replace the first one by the second one in the two next subsections.


%
%
%

\begin{prop}\label{tauexistence}
	There is an injective morphism of graded super vector spaces
	\[\tau:\Ext_{\Pbold}^*(\PF \circ \Ibold^{(s)}_\ell,\Ibold^{(s)}_\ell)^{(r)}\to\Ext_{\Pbold}^*(\PF \circ \Ibold_\ell^{(r+s)},\Ibold^{(r+s)}_\ell) \]
	which fits into a commutative diagram
	\[	\begin{tikzcd}[row sep=large]
		\Ext_{\Pbold}^*(\PF \circ \Ibold^{(s)}_\ell,\Ibold^{(s)}_\ell)^{(r)} \arrow[r, "\tau"] & \Ext_{\Pbold}^*(\PF \circ \Ibold_\ell^{(r+s)},\Ibold^{(r+s)}_\ell) \\
		\Ext_{\Pbold}^*(\Ibold^{(s)}_\ell, \Ibold^{(s)}_\ell)^{(r)} \arrow[r, "\sigma"] \arrow[u, "(-)\cdot \pi_s^{(r)}"] & \Ext_{\Pbold}(\Ibold_{\ell}^{(r+s)}, \Ibold_{\ell}^{(r+s)}) \arrow[u, "(-)\cdot \pi_{r+s}"]
	\end{tikzcd} \]
\end{prop}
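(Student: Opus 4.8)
The plan is to build $\tau$ directly out of the already-constructed $\sigma$ by exploiting the odd isomorphism of Lemma \ref{extwithpf}. First I would recall that Yoneda composition with $\pi_s \in \Hom_{\Pbold}(\PF \circ \Ibold^{(s)}_\ell, \Ibold^{(s)}_\ell)$ yields an odd isomorphism
\[\Ext_{\Pbold}^*(\Ibold^{(s)}_\ell,\Ibold^{(s)}_\ell) \xrightarrow{\ (-)\cdot \pi_s\ } \Ext_{\Pbold}^*(\PF \circ \Ibold^{(s)}_\ell,\Ibold^{(s)}_\ell)\;,\]
and likewise $(-)\cdot \pi_{r+s}$ is an odd isomorphism at level $r+s$. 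Applying the Frobenius twist $(-)^{(r)}$ to the first isomorphism gives an isomorphism with source $\Ext_{\Pbold}^*(\Ibold^{(s)}_\ell,\Ibold^{(s)}_\ell)^{(r)}$ and target $\Ext_{\Pbold}^*(\PF \circ \Ibold^{(s)}_\ell,\Ibold^{(s)}_\ell)^{(r)}$, which is exactly the left vertical arrow $(-)\cdot \pi_s^{(r)}$ in the diagram. The natural candidate is then to \emph{define}
\[\tau := \big((-)\cdot \pi_{r+s}\big)\circ \sigma \circ \big((-)\cdot \pi_s^{(r)}\big)^{-1}\;,\]
i.e. $\tau(\xi \cdot \pi_s^{(r)}) := \sigma(\xi)\cdot \pi_{r+s}$ for $\xi \in \Ext_{\Pbold}^*(\Ibold^{(s)}_\ell,\Ibold^{(s)}_\ell)^{(r)}$. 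With this definition the square commutes tautologically, and $\tau$ is injective because it is a composite of an injection (namely $\sigma$, by the previous Corollary) with two isomorphisms.

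The one point that genuinely needs checking is that $\tau$ so defined is a \emph{well-defined morphism of graded super vector spaces}, and in particular that the formula does not depend on spurious choices: since $(-)\cdot\pi_s^{(r)}$ is a bijection, every element of the source is uniquely of the form $\xi\cdot\pi_s^{(r)}$, so well-definedness is automatic; linearity and degree-preservation (up to the shift introduced by the odd classes $\pi_s$, $\pi_{r+s}$, which cancel out since $\pi_s$ and $\pi_{r+s}$ are both odd) follow from linearity of $\sigma$ and of Yoneda composition. I would also record explicitly that $\tau$ is a morphism of modules over $\Ext_{\Pbold}^*(\Ibold^{(s)}_\ell,\Ibold^{(s)}_\ell)^{(r)}$, since $\sigma$ is an algebra map and Yoneda composition on the right is $\Ext$-bilinear; this compatibility is what will be used in the following subsection.

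The main (and really only) obstacle is a bookkeeping one: one must be careful about the \emph{parity} of $\pi_s$ versus $\pi_{r+s}$ and about the sign conventions in graded-commutative Yoneda composition, so that the square commutes on the nose rather than up to a sign. Concretely, one checks that for $\xi$ in the source, $\sigma(\xi)\cdot\pi_{r+s}$ and $\sigma(\xi\cdot\pi_s^{(r)})$ agree — equivalently, that $\sigma$ intertwines right multiplication by $\pi_s^{(r)}$ with right multiplication by $\pi_{r+s}$. But this is exactly the content of requiring the diagram to commute, and since $\tau$ is \emph{defined} by that diagram there is nothing further to prove; the only substantive input, that $\sigma$ is an injective morphism, is Proposition \ref{pr-alg} together with the preceding Corollary. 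Hence the proof reduces to: define $\tau$ by the displayed formula, observe the square commutes by construction, and note injectivity follows from that of $\sigma$.
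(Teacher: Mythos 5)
Your proposal is correct and coincides with the paper's own argument: the paper also defines $\tau$ as the conjugate of $\sigma$ by the two vertical odd isomorphisms $(-)\cdot\pi_s^{(r)}$ and $(-)\cdot\pi_{r+s}$, so that commutativity holds by construction and injectivity is inherited from $\sigma$. The paper simply records this as ``straightforward from the fact that the two vertical arrows are isomorphisms.''
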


\begin{proof}
	Straightforward from the fact that the two vertical arrows are isomorphisms.
\end{proof}

As in Section \ref{secsigma}, $\tau$ induces a graded linear map
\[
\begin{array}{cccc}
	 \Omega: & \Ext^*_\Pcal(I^{(r)},F) \otimes \Ext^*(\PF \circ \Ibold^{(s)}_\ell,\Ibold^{(s)}_\ell)^{(r)}
	 &\to& \Ext^*(\PF \circ \Ibold_\ell^{(r+s)}, F \circ \Ibold^{(s)}_\ell) \\
 & x \otimes e^{(r)} & \mapsto & (x \circ \Ibold_{\ell}^{(s)}) \cdot \tau(e^{(r)})  
\end{array}
\]

\begin{prop}\label{omegaiso}
	For all $F$, the map $\Omega$ is an isomorphism.
\end{prop}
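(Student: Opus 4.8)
The plan is to mimic the proof of Proposition \ref{Psiiso} almost verbatim, since the formal structure is identical: $\Omega$ is again built by Yoneda multiplication with a ``stabilisation map'' $\tau$, and we want to show it is an isomorphism by a spectral sequence argument after reducing to injective cogenerators $F=S^\mu$. First I would record the analogue of Lemma \ref{lm-premier-cas}: if $F$ is injective (equivalently a direct sum of symmetric tensors $S^\mu$), then $\Omega$ is an isomorphism. For $\mu=(p^r)$ one uses the commutative square in Proposition \ref{tauexistence} together with Lemma \ref{extwithpf}: applying the functor $(-)\cdot\pi$ (Yoneda composition with the odd parity-change morphism) turns the statement for $S^{p^r}\circ\Ibold^{(s)}_\ell$ into the corresponding statement for $\Ibold^{(s)}_\ell$, which is exactly the case handled in Lemma \ref{lm-premier-cas} via Proposition \ref{pr-alg}(3). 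Concretely, the diagram

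\[
\begin{tikzcd}[row sep=large]
\Ext_{\Pbold}^*(\PF \circ \Ibold^{(s)}_\ell,\Ibold^{(s)}_\ell)^{(r)} \arrow[r] & \Ext_{\Pbold}^*(\PF \circ \Ibold_\ell^{(r+s)},(S^{p^r})_\ell^{(s)}) \\
\Ext_{\Pbold}^*(\Ibold^{(s)}_\ell, \Ibold^{(s)}_\ell)^{(r)} \arrow[r] \arrow[u, "(-)\cdot \pi_s^{(r)}"] & \Ext_{\Pbold}^*(\Ibold_{\ell}^{(r+s)}, (S^{p^r})_{\ell}^{(s)}) \arrow[u]
\end{tikzcd}
\]

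commutes, both vertical maps are (odd) isomorphisms by Lemma \ref{extwithpf}, and the bottom map is an isomorphism by the argument already given in the proof of Lemma \ref{lm-premier-cas}; hence so is the top one, which is $\Omega$ for $F=S^{p^r}$. For $\mu\ne(p^r)$, both $S^\mu$ and $S^\mu\circ\Ibold^{(s)}_\ell$ are tensor products of reduced functors, so source and target of $\Omega$ vanish by Pirashvili's vanishing Lemma \ref{pirashvili} (applied with the additive functor $\PF\circ\Ibold^{(r+s)}_\ell$, noting $\Ext^*_{\Pbold}(\PF\circ F,-)\cong\Ext^*_{\Pbold}(F,-)$ so additivity of $\PF\circ F$ is not even needed, only that of $F$).

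Next I would carry over the spectral sequence comparison. Fix an injective coresolution $J^*$ of $F$ in $\Pcal$ and a projective resolution $P_*$ of $\PF\circ\Ibold^{(r+s)}_\ell$ (equivalently of $I^{(r)}\circ\PF\circ\Ibold^{(s)}_\ell$, since $\PF$ commutes with $I^{(r)}$) in $\Pbold_{ev}$, and form the three bicomplexes
\begin{align*}
&B^{m,n}=\Hom_\Pcal(I^{(r)},J^m)\otimes \Ext^n_{\Pbold}(\PF\circ\Ibold^{(s)}_\ell,\Ibold^{(s)}_\ell)^{(r)}\;,\\
&C^{m,n}= \Hom_\Pcal(I^{(r)},J^m)\otimes \Hom_{\Pbold}(P_n,\Ibold^{(r+s)}_\ell)\;,\\
&D^{m,n}=\Hom_{\Pbold}(P_n,J^m\circ\Ibold^{(r)}_\ell)\;,
\end{align*}
with the total homologies computing respectively the source of $\Omega$, an auxiliary object, and the target of $\Omega$. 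Choosing a basis of $\Ext^*_{\Pbold}(\PF\circ\Ibold^{(s)}_\ell,\Ibold^{(s)}_\ell)$ and cocycles representing the images under $\tau$ gives $z:B^{\bullet,\bullet}\to C^{\bullet,\bullet}$ with $H^*(\Tot z)=\id\otimes\tau$, and precomposition by $\Ibold^{(s)}_\ell$ followed by composition of morphisms gives $\phi:C^{\bullet,\bullet}\to D^{\bullet,\bullet}$, so that $H^*(\Tot(\phi\circ z))=\Omega$. Passing to the first-quadrant spectral sequences of $B$ and $D$, the map $\phi\circ z$ induces a morphism of spectral sequences whose $E_1$-map on the column of index $m$ is exactly $\Omega$ for the injective functor $J^m$, hence an isomorphism by the first step; therefore it is an isomorphism on every page and on the abutment, so $\Omega$ is an isomorphism.

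The genuinely new point — and the only place where care is needed — is the bookkeeping of parities and signs, since $\pi_s$, $\pi_{r+s}$ and $\tau$ are all \emph{odd}. One must check that the square in Proposition \ref{tauexistence} really does commute on the nose (this is asserted there and I would take it as given), and that replacing the vertical maps by Yoneda composition with odd morphisms does not disturb the ``$E_1$-page is $\Omega$ column by column'' identification; this is where I expect the main, though purely formal, obstacle to lie. Everything else is a word-for-word transcription of Sections \ref{secsigma}–\ref{secsigmaprime}, so I would simply write: the proof is identical to that of Proposition \ref{Psiiso}, with $\sigma$ replaced by $\tau$, using Proposition \ref{tauexistence} and Lemma \ref{extwithpf} to reduce the computation for injective cogenerators to the one already performed in Lemma \ref{lm-premier-cas}, and invoking Lemma \ref{pirashvili} for the remaining $S^\mu$.
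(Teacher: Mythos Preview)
Your argument is correct, but it takes a longer route than the paper. You redo the entire spectral sequence comparison from Proposition~\ref{Psiiso} with $\tau$ in place of $\sigma$, first checking the injective case via the commutative square of Proposition~\ref{tauexistence} and then running the bicomplex machinery again. The paper instead observes that the commutative square of Proposition~\ref{tauexistence} extends, for \emph{every} $F$, to a square
\[
\begin{tikzcd}[row sep=large]
\Ext^*_\Pcal(I^{(r)},F) \otimes \Ext^*_{\Pbold}(\PF \circ \Ibold^{(s)}_\ell,\Ibold^{(s)}_\ell)^{(r)} \arrow[r, "\Omega"] & \Ext^*_{\Pbold}(\PF \circ \Ibold_\ell^{(r+s)},F \circ \Ibold^{(s)}_\ell) \\
\Ext^*_\Pcal(I^{(r)},F) \otimes \Ext_{\Pbold}^*(\Ibold^{(s)}_\ell, \Ibold^{(s)}_\ell)^{(r)} \arrow[r, "\Psi"] \arrow[u, "\id \otimes (-)\cdot \pi_s^{(r)}"] & \Ext^*_{\Pbold}(\Ibold_{\ell}^{(r+s)}, F \circ \Ibold_{\ell}^{(s)}) \arrow[u, "(-) \cdot \pi_{r+s}"]
\end{tikzcd}
\]
(commutativity is just associativity of Yoneda composition together with the defining relation $\tau(e\cdot\pi_s^{(r)})=\sigma(e)\cdot\pi_{r+s}$). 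The vertical arrows are isomorphisms by Lemma~\ref{extwithpf}, and the bottom arrow is an isomorphism for all $F$ by Proposition~\ref{Psiiso}, so $\Omega$ is an isomorphism immediately. In other words, you used the square only to handle the injective cogenerators and then rebuilt the spectral sequence from scratch, whereas the paper uses the square once to transport the \emph{conclusion} of Proposition~\ref{Psiiso} directly. Your approach is self-contained and would work even if $\Psi$ had not already been proved, but here it duplicates effort; the paper's three-line argument is the natural shortcut and also sidesteps the sign bookkeeping you were worried about, since the odd isomorphisms are applied uniformly to both sides rather than threaded through the bicomplexes.
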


\begin{proof}
	The diagram of Proposition \ref{tauexistence} induces the following one
	\[
	\begin{tikzcd}[row sep=large]\label{appo}
	\Ext^*_\Pcal(I^{(r)},F) \otimes \Ext^*(\PF \circ \Ibold^{(s)}_\ell,\Ibold^{(s)}_\ell)^{(r)} \arrow[r, "\Omega"] & \Ext^*(\PF \circ \Ibold_\ell^{(r+s)},F \circ \Ibold^{(s)}_\ell) \\
	\Ext^*_\Pcal(I^{(r)},F) \otimes \Ext_{\Pbold}^*(\Ibold^{(s)}_\ell, \Ibold^{(s)}_\ell)^{(r)} \arrow[r, "\Psi"] \arrow[u, "id \otimes (-)\cdot \pi_s^{(r)}"] & \Ext_{\Pbold}(\Ibold_{\ell}^{(r+s)}, F \circ \Ibold_{\ell}^{(s)}) \arrow[u, "(-) \cdot \pi_{r+s}"]
	\end{tikzcd}
	\]
	which is then commutative as well. The two vertical arrows are obviously isomorphisms and $\Psi$ is an isomorphism by Proposition \ref{Psiiso}. The statement follows at once.
\end{proof}

\subsection{The case $A=\PF \circ \Ibold_\ell^{(r)}$ and $B= \Ibold_{\overline{\ell}}^{(r)}$}\label{sectauprime}

%


We use again Lemma \ref{extwithpf} with $F=\Ibold_{\ell}^{(s)}, \;  G=\Ibold_{\overline{\ell}}^{(s)}$ and $\pi_s$ the same parity change morphism.

\begin{prop}
	There is an injective morphism of graded super vector spaces
	\[\tau':\Ext_{\Pbold}^*(\PF \circ \Ibold^{(s)}_\ell,\Ibold^{(s)}_{\overline{\ell}})^{(r)}\to \Ext_{\Pbold}^*(\Ibold^{(r+s)}_\ell,\Ibold^{(r+s)}_{\overline{\ell}}) \]
	which fits into a commutative diagram
	\[	\begin{tikzcd}[row sep=large]
	\Ext_{\Pbold}^*(\PF \circ \Ibold^{(s)}_\ell,\Ibold^{(s)}_{\overline{\ell}})^{(r)} \arrow[r, "\tau'"] & \Ext_{\Pbold}^*(\PF \circ \Ibold_\ell^{(r+s)},\Ibold^{(r+s)}_{\overline{\ell}}) \\
	\Ext_{\Pbold}^*(\Ibold^{(s)}_\ell, \Ibold^{(s)}_{\overline{\ell}})^{(r)} \arrow[r, "\sigma'"] \arrow[u, "(-)\cdot \pi_r"] & \Ext_{\Pbold}(\Ibold_{\ell}^{(r+s)}, \Ibold_{\overline{\ell}}^{(r+s)}) \arrow[u, "(-)\cdot \pi_{r+s}"]
	\end{tikzcd} \]
\end{prop}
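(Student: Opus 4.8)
The plan is to mimic the proof of Proposition~\ref{tauexistence} verbatim, replacing the target twist $\ell$ by $\overline{\ell}$ and the map $\sigma$ by the map $\sigma'$ produced in Section~\ref{secsigmaprime}. First I would apply Lemma~\ref{extwithpf} twice. With $F=\Ibold_\ell^{(s)}$ and $G=\Ibold_{\overline{\ell}}^{(s)}$ it asserts that Yoneda composition with the odd morphism $\pi_s$ is an odd isomorphism
\[
\Ext_{\Pbold}^*(\Ibold_\ell^{(s)},\Ibold_{\overline{\ell}}^{(s)})\xrightarrow{\ \sim\ }\Ext_{\Pbold}^*(\PF\circ\Ibold_\ell^{(s)},\Ibold_{\overline{\ell}}^{(s)})\;;
\]
applying to it the Frobenius twist of graded vector spaces of Notation~\ref{nota-twist-ev} (an exact additive functor, hence one preserving isomorphisms) yields the left-hand vertical arrow of the diagram, still an isomorphism. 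With $F=\Ibold_\ell^{(r+s)}$ and $G=\Ibold_{\overline{\ell}}^{(r+s)}$ the same lemma gives that $(-)\cdot\pi_{r+s}$ is an odd isomorphism, namely the right-hand vertical arrow.

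Then I would simply \emph{define} $\tau'$ by the composite
\[
\tau':=\bigl((-)\cdot\pi_{r+s}\bigr)\circ\sigma'\circ\bigl((-)\cdot\pi_s^{(r)}\bigr)^{-1}\;,
\]
so that the square commutes by construction. Injectivity of $\tau'$ is then immediate: $\sigma'$ is injective by the proposition of Section~\ref{secsigmaprime} and the two vertical maps are bijections. All four edges of the square are homogeneous for the cohomological grading, so $\tau'$ is a morphism of graded super vector spaces (a parity count shows, as for $\tau$, that it is in fact even). Finally I would record the resulting normalisation $\tau'(c_s^{(r)})=c_{s+r}\cdot\pi_{r+s}$ under these identifications, since this is the concrete datum that the subsequent computations use.

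I do not expect any genuine obstacle here: the proposition is the exact analogue of Proposition~\ref{tauexistence}, and, once Lemma~\ref{extwithpf} has been applied to identify source and target with their $\PF$-free counterparts, the construction is a purely formal diagram chase. The one point worth spelling out is that the twist of Notation~\ref{nota-twist-ev} carries the isomorphism $(-)\cdot\pi_s$ to an isomorphism; this is clear since that twist merely relabels cohomological degrees and base-changes each graded piece along the $p^r$-th power map, and in particular is exact and additive.
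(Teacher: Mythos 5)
Your proposal is correct and is essentially the paper's own argument: the paper also defines $\tau'$ by the commutative square, noting only that the two vertical arrows (given by Lemma \ref{extwithpf}) are isomorphisms, exactly as in Proposition \ref{tauexistence}. The extra remarks on injectivity and on the twist preserving isomorphisms are fine and consistent with what the paper leaves implicit.
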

\begin{proof}
	Identical to the one of Proposition \ref{tauexistence}.
\end{proof}

Set the graded linear map
\[ \Omega' : \Ext^*_\Pcal(I^{(r)},F) \otimes \Ext_{\Pbold}^*(\PF \circ \Ibold^{(s)}_\ell,\Ibold^{(s)}_{\overline{\ell}})^{(r)}\to \Ext_{\Pbold}^*(I^{(r)} \circ \PF \circ \Ibold^{(s)}_\ell,F \circ \Ibold^{(s)}_{\overline{\ell}}) \]
induced by $\tau'$ in the same way as $\Omega$ is induced by $\tau$. The proof of the next proposition is identical to the proof of Proposition \ref{omegaiso}.

\begin{prop}\label{omegaprimeiso}
	For all $F$, the map $\Omega'$ is an isomorphism.
\end{prop}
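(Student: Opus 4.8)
The plan is to copy, \emph{mutatis mutandis}, the proof of Proposition \ref{omegaiso}, simply replacing the target twist $\Ibold_\ell^{(s)}$ by $\Ibold_{\overline{\ell}}^{(s)}$ everywhere. Concretely, I would start from the commutative square of the preceding proposition (the one defining $\tau'$), tensor it on the left with $\Ext^*_\Pcal(I^{(r)},F)$, and postcompose it on the right with the functor $F$. Using the identification $I^{(r)}\circ\PF\circ\Ibold_\ell^{(s)}\simeq\PF\circ\Ibold_\ell^{(r+s)}$ recalled at the beginning of Section \ref{sectau} (valid because $\PF$ commutes with $I^{(r)}$), this produces a commutative diagram
\[
\begin{tikzcd}[row sep=large]
\Ext^*_\Pcal(I^{(r)},F)\otimes\Ext_{\Pbold}^*(\PF\circ\Ibold^{(s)}_\ell,\Ibold^{(s)}_{\overline{\ell}})^{(r)} \arrow[r,"\Omega'"] & \Ext_{\Pbold}^*(\PF\circ\Ibold_\ell^{(r+s)},F\circ\Ibold^{(s)}_{\overline{\ell}}) \\
\Ext^*_\Pcal(I^{(r)},F)\otimes\Ext_{\Pbold}^*(\Ibold^{(s)}_\ell,\Ibold^{(s)}_{\overline{\ell}})^{(r)} \arrow[r,"\Psi'"] \arrow[u,"\id\otimes(-)\cdot\pi_s^{(r)}"] & \Ext_{\Pbold}(\Ibold_{\ell}^{(r+s)},F\circ\Ibold_{\overline{\ell}}^{(s)}) \arrow[u,"(-)\cdot\pi_{r+s}"]
\end{tikzcd}
\]
whose top row is $\Omega'$ and whose bottom row is $\Psi'$.

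Next I would note that the two vertical arrows are isomorphisms: Yoneda composition with the odd parity-change morphism $\pi$ is invertible by Lemma \ref{extwithpf}, and this property is preserved after tensoring with $\id_{\Ext^*_\Pcal(I^{(r)},F)}$ or after postcomposing by $F$. Since $\Psi'$ is an isomorphism by Proposition \ref{Psiprimeiso}, the commutativity of the square forces $\Omega'$ to be an isomorphism as well.

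There is essentially no obstacle here: the whole substance has already been packaged into Proposition \ref{Psiprimeiso} and Lemma \ref{extwithpf}, and the only thing left to verify is that the square commutes, which follows formally from the commutativity of the $\tau'$-square together with the functoriality and associativity of Yoneda composition. The only point deserving a modicum of care is the sign bookkeeping attached to the odd morphism $\pi$, but since every map appearing in the square is one of the graded-linear maps supplied by Lemma \ref{extwithpf}, no new sign computation is required.
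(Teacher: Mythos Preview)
Your proof is correct and follows exactly the same approach as the paper: the paper simply states that the proof is identical to that of Proposition~\ref{omegaiso}, which amounts precisely to the commutative square you wrote down, with vertical isomorphisms coming from Lemma~\ref{extwithpf} and the bottom row an isomorphism by Proposition~\ref{Psiprimeiso}.
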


\subsection{The general case}
We want to generalise the maps we have been constructing in the previous sections. Let $s$ be a positive integer, let $\mathcal{A}$ be full subcategory of $\Pbold_{p^s}$ consisting of additive superfunctors with finite dimensional values, and let $\mathcal{I}$ be the full subcategory of $\mathcal{A}$ generated by the set $\{ \Ibold^{(r)}_0, \Ibold^{(r)}_1, \PF \circ \Ibold^{(r)}_0\, \PF \circ \Ibold^{(r)}_1\}$. 
In the previous subsections, we have computed extensions of the form $\Ext^*_{\Pbold}(I^{(r)}\circ A,F\circ B)$ in a strict subset of cases where $A$ and $B$ are in $\mathcal{I}$. In this subsection, we generalise our computations to arbitrary $A,B \in \mathcal{A}$. We pass through the following standard fact:

\begin{lm}\label{technicallemma}
	Let $\mathcal{A}, \mathcal{B}$ be additive categories and $G,H : \mathcal{A} \rightarrow \mathcal{B}$ two additive functors. Let $\mathcal{I}$ be a full subcategory of $\mathcal{A}$ such that every object of $\mathcal{A}$ can be written as a finite direct sum of copies of objects in $Ob(\mathcal{I})$. Denote by $i: \mathcal{I} \hookrightarrow \mathcal{A}$ the canonical inclusion. Then:
	\begin{enumerate}
		\item Every natural transformation $T : G \circ i \rightarrow H \circ i$ extends uniquely to a natural transformation $\widetilde{T}: G \rightarrow H$.
		\item $\widetilde{T}$ is an isomorphism (monomorphism, epimorphism) if and only if $T$ is.
	\end{enumerate}
\end{lm}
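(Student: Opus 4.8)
The plan is to prove both statements of Lemma \ref{technicallemma} by the same ``coordinate'' argument: fix, once and for all, for every object $X$ of $\mathcal{A}$ a finite direct sum decomposition $X \simeq \bigoplus_{\alpha} I_\alpha$ with each $I_\alpha \in \mathrm{Ob}(\mathcal{I})$, together with the structure maps $\iota_\alpha : I_\alpha \to X$ and $p_\alpha : X \to I_\alpha$ satisfying $p_\alpha \iota_\beta = \delta_{\alpha\beta}$ and $\sum_\alpha \iota_\alpha p_\alpha = \mathrm{id}_X$. Since $G$ and $H$ are additive functors, they send these biproduct data to biproduct data: $G(X) \simeq \bigoplus_\alpha G(I_\alpha)$ via $G(\iota_\alpha), G(p_\alpha)$, and likewise for $H$. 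This is the only real ingredient, and it is why we need $G, H$ additive rather than arbitrary.

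For part (1), given $T : G\circ i \to H\circ i$, I would simply \emph{define} $\widetilde{T}_X : G(X) \to H(X)$ by the matrix formula
\[
\widetilde{T}_X := \sum_\alpha H(\iota_\alpha) \circ T_{I_\alpha} \circ G(p_\alpha)\;,
\]
which makes sense because each $I_\alpha \in \mathrm{Ob}(\mathcal{I})$. When $X$ is already in $\mathcal{I}$ one checks $\widetilde{T}_X = T_X$ by taking the trivial one-term decomposition (here one uses that the decomposition attached to an object of $\mathcal{I}$ can be taken to be the trivial one — or, more carefully, that any two decompositions give the same answer, which follows from naturality of $T$ on the isomorphisms of $\mathcal{A}$ comparing the two decompositions). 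Naturality of $\widetilde{T}$: for a morphism $f : X \to Y$ one writes $f = \sum_{\alpha,\beta} \iota^Y_\beta (p^Y_\beta f \iota^X_\alpha) p^X_\alpha$, notes that each $p^Y_\beta f \iota^X_\alpha$ is a morphism of $\mathcal{I}$ on which $T$ is natural, and expands $H(f)\circ\widetilde{T}_X$ and $\widetilde{T}_Y\circ G(f)$ using this, the matrix formula, and the biproduct relations $G(p_\alpha)G(\iota_\beta) = \delta_{\alpha\beta}$; the two expressions collapse to the same double sum. Uniqueness is forced: if $\widetilde{T}$ restricts to $T$ on $\mathcal{I}$, then naturality against $\iota_\alpha$ and $p_\alpha$ together with $\sum_\alpha \iota_\alpha p_\alpha = \mathrm{id}$ recovers exactly the matrix formula above, so $\widetilde{T}$ is determined.

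For part (2), one direction is trivial since $T$ is a restriction of $\widetilde{T}$. For the converse, suppose $T$ is a natural isomorphism (resp. mono, epi) in the functor category / at each object; then each $T_{I_\alpha}$ is an isomorphism (resp. mono, epi) of $\mathcal{B}$, and the matrix formula exhibits $\widetilde{T}_X$ as the composite $G(X) \xrightarrow{\sim} \bigoplus_\alpha G(I_\alpha) \xrightarrow{\oplus T_{I_\alpha}} \bigoplus_\alpha H(I_\alpha) \xrightarrow{\sim} H(X)$, where the outer maps are the biproduct isomorphisms produced by additivity of $G$ and $H$. A direct sum of isomorphisms (resp. monos, epis) in an additive — in our applications, abelian — category is again one, so $\widetilde{T}_X$ has the required property for every $X$. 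The main obstacle, such as it is, is purely bookkeeping: checking that the matrix formula is independent of the chosen decomposition and genuinely natural, i.e. organizing the biproduct identities carefully enough that the double sums in $H(f)\widetilde{T}_X$ and $\widetilde{T}_Y G(f)$ visibly match. There is no conceptual difficulty beyond the additivity of $G$ and $H$.
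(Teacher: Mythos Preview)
The paper does not actually supply a proof of this lemma: it is introduced as a ``standard fact'' and left unproved. Your argument is exactly the standard one (matrix description via fixed biproduct decompositions, naturality checked componentwise, uniqueness forced by naturality against the structure maps $\iota_\alpha, p_\alpha$), and it is correct as written; the only thing worth tightening is that finite direct sums of monomorphisms/epimorphisms are monomorphisms/epimorphisms already in any additive category, so your parenthetical retreat to the abelian case is unnecessary.
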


It follows from Theorem \ref{classaddfunc} and Lemma \ref{rem} that the categories $\mathcal{A}$ and $\mathcal{I}$ defined at the beginning of this section satisfy the hypothesis of Lemma \ref{technicallemma}. We keep this notation for the proof of the following proposition.

\begin{prop}\label{eee}
Let $A$ and $B$ be two additive superfunctors of degree $p^s$. There is an injective morphism of graded super vector spaces
\[\sigma_{A,B}: \Ext^*_{\Pbold}(A,B)^{(r)}\to  \Ext^*_{\Pbold}(I^{(r)}\circ A,I^{(r)}\circ B)\]
natural with respect to $A$ and $B$, determined by the following conditions:
\begin{enumerate}
\item When $A=B=\Ibold_\ell^{(s)}$, $\sigma_{A,B}$ equals the map $\sigma$ of Section \ref{secsigma},
\item When $A=\Ibold_\ell^{(s)}$ and $B=\Ibold_{\overline{\ell}}^{(s)}$, $\sigma_{A,B}$ equals the map $\sigma'$ of Section \ref{secsigmaprime},
\item When $A=\PF \circ \Ibold_\ell^{(s)}$ and $B=\Ibold_\ell^{(s)}$, $\sigma_{A,B}$ equals the map $\tau$ of Section \ref{sectau},
\item When $A=\PF \circ \Ibold_\ell^{(s)}$ and $B=\Ibold_{\overline{\ell}}^{(s)}$, $\sigma_{A,B}$ equals the map $\tau'$ of Section \ref{sectauprime},
\item There is a commutative square
\[
\begin{tikzcd}[column sep =large]
\Ext^*_{\Pbold}(A,B)^{(r)}\ar{r}{\sigma_{A,B}} \ar{d}{\circ\PF}[swap]{\simeq}& \Ext^*_{\Pbold}(I^{(r)}\circ A,I^{(r)}\circ B)\ar{d}{\circ\PF}[swap]{\simeq}\\
\Ext^*_{\Pbold}(A\circ \PF,B\circ \PF)^{(r)}\ar{r}{\sigma_{A\circ \PF,B\circ \PF}}& \Ext^*_{\Pbold}(I^{(r)}\circ A\circ \PF,I^{(r)}\circ B\circ \PF).
\end{tikzcd}
\]
\end{enumerate}
\end{prop}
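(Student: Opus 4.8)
The plan is to build $\sigma_{A,B}$ in three stages, using Lemma \ref{technicallemma} as the gluing device. First I would record that by Theorem \ref{classaddfunc} and Lemma \ref{rem}, every $A \in \mathcal{A}$ is a finite direct sum of the six indecomposables, and that postcomposition by $I^{(r)}$ is an additive functor $\mathcal{A} \to (\Pbold_{p^{r+s}})_{ev}$ (it preserves finite biproducts since $I^{(r)}$ does, being additive as a classical functor applied objectwise). Hence both sides of the desired map, viewed as functors of the pair $(A,B)$, send finite direct sums in each variable to finite direct sums; so to define a natural transformation on all of $\mathcal{A}\times\mathcal{A}$ it suffices, by a bifunctorial version of Lemma \ref{technicallemma}(1), to define it on pairs $(A,B)$ with $A,B\in Ob(\mathcal{I})$ in a way that is natural for morphisms in $\mathcal{I}$.

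Second, I would define $\sigma_{A,B}$ on $\mathcal{I}\times\mathcal{I}$ by hand, case by case. The relevant datum is $\Hom_{\Pbold}$ between two objects of $\mathcal{I}$: these are computed from Lemma \ref{app}-type parity arguments, and $\Ext^*_{\Pbold}(A,B)$ is nonzero only when $A$ and $B$ lie in the same "Frobenius-twist component" up to $\PF$'s. Concretely: $\Ext^*_{\Pbold}(\Ibold^{(s)}_\ell,\Ibold^{(s)}_\ell)$, $\Ext^*_{\Pbold}(\Ibold^{(s)}_\ell,\Ibold^{(s)}_{\overline\ell})$, and their composites with $\PF$ on the left (governed by Lemma \ref{extwithpf}) and on the right. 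For a pair with no $\PF$ on the right we use $\sigma$ or $\sigma'$ (conditions (1),(2)); for a pair with $\PF$ on the left of $A$ we use $\tau$ or $\tau'$ (conditions (3),(4)); and condition (5) then pins down the value whenever $\PF$ appears on the \emph{right} of $B$ (equivalently, precomposed with $\PF$), because $-\circ\PF$ is an isomorphism on both $\Ext$-groups and intertwines the twist functors via Lemma \ref{rem}. One must check this assignment is consistent: the various isomorphisms $\PF\circ\PF\simeq\Ibold$, $\PF\circ\Ibold^{(r)}_\ell\simeq\Ibold^{(r)}_{\overline\ell}\circ\PF$ (Lemma \ref{rem}), and the compatibility of $\sigma,\sigma',\tau,\tau'$ with the vertical $\pi$-composition maps in Propositions \ref{tauexistence} and its analogue, together force all definitions to agree on overlaps. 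Naturality for the morphisms of $\mathcal{I}$ — which are built from the even scalars, the parity change $\pi$, and the twist-identifications — reduces to the cited functoriality of the Drupieski classes $e^i_t$, $c_t$, and the fact that $\sigma$ was already shown to be a morphism of algebras (hence compatible with composition of twists).

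Third, having obtained $\sigma_{A,B}$ on $\mathcal{I}\times\mathcal{I}$, I apply Lemma \ref{technicallemma}(1) (in each variable successively) to extend it uniquely to $\mathcal{A}\times\mathcal{A}$, and Lemma \ref{technicallemma}(2) to conclude that since each $\sigma,\sigma',\tau,\tau'$ is injective (Propositions of the previous subsections), so is $\sigma_{A,B}$ for all $A,B$: indeed on a direct sum $A=\bigoplus A_i$, $B=\bigoplus B_j$ the map $\sigma_{A,B}$ is the block-diagonal sum of the $\sigma_{A_i,B_j}$, a direct sum of injections, hence injective. Condition (5) for general $A,B$ then follows from its validity on $\mathcal{I}\times\mathcal{I}$ by the uniqueness clause of Lemma \ref{technicallemma}(1), since both composites in the square are natural transformations restricting to the same thing on $\mathcal{I}$.

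The main obstacle I anticipate is the \emph{consistency check} in the second stage: the four maps $\sigma,\sigma',\tau,\tau'$ were each defined with a fixed choice of which side carries $\PF$ and of the integer $\ell$, so one must verify that the web of identifications $\PF^2\simeq\Ibold$ and $\PF\circ\Ibold^{(r)}_\ell\simeq\Ibold^{(r)}_{\overline\ell}\circ\PF$ does not produce two incompatible values of $\sigma_{A,B}$ for a single pair $(A,B)$ reachable by different routes (e.g. $(\PF\circ\Ibold^{(s)}_0,\,\PF\circ\Ibold^{(s)}_0)$ via condition (3) then (5), versus rewriting as $(\Ibold^{(s)}_1\circ\PF,\,\Ibold^{(s)}_1\circ\PF)$). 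This is where one genuinely uses that $\sigma$ is a ring map and that the diagrams of Proposition \ref{tauexistence} (and its primed analogue) commute; I expect it to be a finite but slightly tedious diagram chase rather than a conceptual difficulty. Naturality on the nose and injectivity are then comparatively routine given Lemma \ref{technicallemma}.
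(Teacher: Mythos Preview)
Your proposal is correct and follows the same strategy as the paper's own proof: define $\sigma_{A,B}$ on $\mathcal{I}\times\mathcal{I}$ via conditions (1)--(4), use (5) as a \emph{definition} for the remaining pairs (the paper writes $\sigma_{A\circ\PF,B\circ\PF}(e^{(r)}):=\sigma_{A,B}(e^{(r)}\circ\PF)\circ\PF$), and then extend to all of $\mathcal{A}$ by Lemma~\ref{technicallemma} after checking biadditivity of the two $\Ext$-bifunctors. The paper is actually terser than you are---it does not spell out the consistency or naturality check you rightly flag as the main issue---and note one minor slip: in degree $p^s$ with $s>0$ there are four indecomposable additives (Theorem~\ref{classaddfunc}(2)), not six.
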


\begin{proof}
Whenever $A$ and $B$ are as in (1)-(4) set
\[ \sigma_{A \circ \PF, B \circ \PF}(e^{(r)}) := (\sigma_{A,B}(e^{(r)}\circ\PF)) \circ \PF\]
which gives by construction commutativity of the diagram in (5). Hence, $\sigma_{A,B}$ is completely determined by (1)-(5) when $A,B$ are in $\mathcal{I}$.
%
%
We want to conclude by invoking Lemma \ref{technicallemma} with $\mathcal{A}$ and $\mathcal{I}$ as above, $\mathcal{B}=\mathfrak{svec}$,  $G=\Ext_{\Pbold}^*(A,-)^{(r)}$ (resp. $\Ext_{\Pbold}^*(-,B)^{(r)}$) and $H=\Ext_{\Pbold}^*(I^{(r)} \circ A, I^{(r)} \circ -)$ (resp. $\Ext_{\Pbold}^*(I^{(r)} \circ -, I^{(r)} \circ B)$) for a fixed $A$ (resp. $B$) in $\mathcal{A}.$ The only thing to verify is that $G$ and $H$ are biadditive. For both this is automatic from biadditivity of $\Ext^*_{\Pbold}(-,-)$, additivity of the twist and additivity of the operation $I^{(r)} \circ -$.
\end{proof}

%
%
%

Define the graded linear map
\[ \begin{array}{cccc}
\Psi_{A,B}: & \Ext^*_\Pcal(I^{(r)},F)\otimes \Ext^*_{\Pbold}(A,B)^{(r)} & \to & \Ext_{\Pbold}^*(I^{(r)}\circ A,F\circ B) \\
& x \otimes e^{(r)} & \mapsto & (x\circ B)\cdot \sigma_{A,B}(e^{(r)})
\end{array}
\]

\begin{thm}\label{thm-main}
For all additive homogeneous superfunctors $A$ and $B$ of degree $p^s$ and for all homogeneous strict polynomial functor $F$ of degree $p^r$, $\Psi_{A,B}$ is an isomorphism.
\end{thm}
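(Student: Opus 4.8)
The strategy is to reduce the general case $\Psi_{A,B}$ to the four special cases $\Psi$, $\Psi'$, $\Omega$, $\Omega'$ already proved in Sections \ref{secsigma}--\ref{sectauprime}, using Theorem \ref{classaddfunc} to split $A$ and $B$ into their indecomposable summands, and then Lemma \ref{technicallemma} to assemble the pieces. First I would observe that, by naturality of $\Psi_{A,B}$ in both $A$ and $B$ (which follows from naturality of $\sigma_{A,B}$ established in Proposition \ref{eee}, together with naturality of the Yoneda-composition pairing $x\otimes - \mapsto (x\circ B)\cdot(-)$ in $B$), it suffices to prove the theorem when $A$ and $B$ range over the objects of $\mathcal{I}$, i.e.\ over the six indecomposable additive superfunctors $\Ibold_0^{(s)}, \Ibold_1^{(s)}, \PF\circ\Ibold_0^{(s)}, \PF\circ\Ibold_1^{(s)}$ (and their $\PF$-precompositions, handled by condition (5) of Proposition \ref{eee}). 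Indeed, for fixed $F$, both the source and the target of $\Psi_{A,B}$ are biadditive functors of $(A,B)$ — the source because $\Ext^*_{\Pbold}(-,-)$ is biadditive, the twist is additive, and $\otimes$ with a fixed space is exact; the target because $\Ext^*_{\Pbold}(-,-)$ is biadditive and both $I^{(r)}\circ-$ and $F\circ-$ send finite direct sums of additive superfunctors to finite direct sums (using Corollary \ref{precompaddfunctors} to make sense of $F\circ B$). So Lemma \ref{technicallemma}, applied twice (once in the $A$ variable, once in the $B$ variable, exactly as in the proof of Proposition \ref{eee}), reduces us to $A,B\in Ob(\mathcal{I})$.

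Next I would go through the cases. A degree count shows $\Ext^*_{\Pbold}(A,B)$ can be nonzero only when $A$ and $B$ have the same \say{twist type}: precisely, using Lemma \ref{rem} and the Kuhn-duality Remark \ref{kuhnduality}, one checks that if $A\in\{\Ibold_\ell^{(s)},\PF\circ\Ibold_\ell^{(s)}\}$ then $\Ext^*_{\Pbold}(A,B)=0$ unless $B\in\{\Ibold_0^{(s)},\Ibold_1^{(s)}\}$ (a summand of the form $\PF\circ\Ibold_m^{(s)}$ on the right can be absorbed by Lemma \ref{extwithpf}, and likewise the left $\PF$ is absorbed there). Thus, up to the isomorphisms of Lemmas \ref{rem} and \ref{extwithpf}, there are exactly four cases for $(A,B)$, matching conditions (1)--(4) of Proposition \ref{eee}: $A=B=\Ibold_\ell^{(s)}$; $A=\Ibold_\ell^{(s)}$, $B=\Ibold_{\overline\ell}^{(s)}$; $A=\PF\circ\Ibold_\ell^{(s)}$, $B=\Ibold_\ell^{(s)}$; and $A=\PF\circ\Ibold_\ell^{(s)}$, $B=\Ibold_{\overline\ell}^{(s)}$. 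In each of these, by the defining conditions (1)--(4) of $\sigma_{A,B}$, the map $\Psi_{A,B}$ coincides with $\Psi$, $\Psi'$, $\Omega$, or $\Omega'$ respectively (after the harmless identification $\Ext_{\Pbold}^*(I^{(r)}\circ\PF\circ\Ibold^{(s)}_\ell,-)\simeq\Ext_{\Pbold}^*(\PF\circ\Ibold^{(r+s)}_\ell,-)$ noted before Proposition \ref{tauexistence}), hence is an isomorphism by Propositions \ref{Psiiso}, \ref{Psiprimeiso}, \ref{omegaiso}, \ref{omegaprimeiso}. Finally the case where $A$ or $B$ has a trailing $\PF$ is covered by condition (5), whose vertical maps are isomorphisms.

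\textbf{Main obstacle.} The one genuinely delicate point is the bookkeeping: one must be sure that, after composing the isomorphisms of Lemma \ref{extwithpf} and Lemma \ref{rem} to move a given pair $(A,B)\in Ob(\mathcal{I})^2$ into one of the four model positions, the transported map is \emph{still} $\Psi_{A,B}$ — i.e.\ that these identifications are compatible with $\sigma_{A,B}$ as pinned down by (1)--(5) and with the Yoneda action $(x\circ B)\cdot(-)$. This is exactly what conditions (3),(4),(5) of Proposition \ref{eee} were designed to guarantee, and what Propositions \ref{tauexistence} and the one preceding Proposition \ref{omegaprimeiso} verify via their commutative diagrams; so the argument is really just chasing those diagrams and checking that the parity-change morphisms $\pi_s,\pi_{r+s}$ insert coherently. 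No new ideas are needed beyond what is already in the excerpt; the proof is a careful reduction, and I expect it to be short.
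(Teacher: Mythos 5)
Your overall strategy is the same as the paper's: establish the four model cases, extend to all of $\mathcal{I}$ via condition (5) of Proposition \ref{eee}, and then pass from $\mathcal{I}$ to all of $\mathcal{A}$ by Lemma \ref{technicallemma}. However, there is a genuine gap at the crucial point of that last step. To apply Lemma \ref{technicallemma} you must check that the \emph{target} $H(B)=\Ext^*_{\Pbold}(I^{(r)}\circ A, F\circ B)$ is additive in $B$, and you justify this by asserting that \say{$F\circ -$ send[s] finite direct sums of additive superfunctors to finite direct sums.} That assertion is false for a non-additive $F$ (i.e.\ for any $F$ of degree $p^r$ with $r\ge1$): one has, for instance,
\[
S^{p^r}_V\circ(B\oplus B')\;\simeq\;\bigoplus_{a+b=p^r} S^a_V\circ B\;\otimes\;S^b_V\circ B'\,,
\]
which contains many cross terms beyond $F\circ B\oplus F\circ B'$. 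The functor $H$ \emph{is} additive in $B$, but for a subtler reason: since $I^{(r)}\circ A$ is additive and each cross term with $a,b>0$ is a tensor product of reduced superfunctors, Pirashvili's vanishing lemma (Lemma \ref{pirashvili}) kills the contribution of all cross terms after applying $\Ext^*_{\Pbold}(I^{(r)}\circ A,-)$. This settles the case $F=S^{p^r}_V$ of an injective cogenerator, and the general case then requires a further argument: the paper takes an injective coresolution $F\hookrightarrow J^*$ and compares the two hypercohomology spectral sequences for $J^*\circ B\oplus J^*\circ B'$ and $J^*\circ(B\oplus B')$, which agree on the $E_1$-page by the cogenerator case. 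This Pirashvili-plus-spectral-sequence verification is the substantive content of the paper's proof of the theorem, and it is exactly what your proposal replaces with an incorrect one-line claim.

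A secondary, less serious inaccuracy: your \say{degree count} claim that $\Ext^*_{\Pbold}(A,B)=0$ unless $B\in\{\Ibold^{(s)}_0,\Ibold^{(s)}_1\}$ is not right (e.g.\ $\Ext^*_{\Pbold}(\Ibold^{(s)}_\ell,\PF\circ\Ibold^{(s)}_\ell)$ is a parity shift of $\Ext^*_{\Pbold}(\Ibold^{(s)}_\ell,\Ibold^{(s)}_\ell)$, hence nonzero), and Lemma \ref{extwithpf} as stated only absorbs a $\PF$ on the left of the \emph{first} argument. No such vanishing is needed: the pairs not literally of the forms (1)--(4) are reached from them by condition (5) combined with the isomorphisms $\Ibold^{(s)}_\ell\circ\PF\simeq\PF\circ\Ibold^{(s)}_{\overline{\ell}}$ of Lemma \ref{rem}, which is how the paper handles them.
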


\begin{proof}
	When $A$ and $B$ are of the forms in (1)-(4) of the previous proposition, $\Psi_{A,B}$ is an isomorphism by (respectively) Propositions \ref{Psiiso}, \ref{Psiprimeiso}, \ref{omegaiso} and \ref{omegaprimeiso}. Then condition (5) of Proposition \ref{eee} implies that it is an isomorphism for all $A,B \in \mathcal{I}$. To conclude, we want to invoke Lemma \ref{technicallemma}(2) with the same category settings as the previous proof, $G=\Ext^*_\Pcal(I^{(r)},F)\otimes \Ext^*_{\Pbold}(A,-)^{(r)}$ (resp. $\Ext^*_\Pcal(I^{(r)},F)\otimes \Ext^*_{\Pbold}(-,B)^{(r)}$) and $H=\Ext_\Pcal^*(I^{(r)}\circ A,F\circ -)$ (resp. $\Ext_\Pcal^*(I^{(r)}\circ -,F\circ B)$) for a fixed $A$ (resp. $B$). The only non trivial property left to prove is the additivity of $H$ with respect to $B$. Let then $B, B'$ be additive. If $F=S^{p^r}_V:=S^{p^r}(V \otimes -)$ is an injective cogenerator
	\[ S^{p^r}_V \circ (B \oplus B') = S^{p^r}(V \otimes (B \oplus B')) \simeq S^{p^r}(V \otimes B \; \oplus \; V \otimes B') \simeq \bigoplus_{a+b=p^r} S^a_V \circ B \; \otimes \;  S^b_V \circ B'. \]
	Apply now $\Ext^*_{\Pbold}(I^{(r)} \circ A, -)$ to the last direct sum. Since $I^{(r)} \circ A$ is additive, by Pirashvili's vanishing lemma (\ref{pirashvili}) the only terms which survive are the ones corresponding to $a=0$ and $b=0$. That is,
	\[\Ext_{\Pbold}^*(I^{(r)} \circ A, S^{p^r}_V \circ (B \oplus B')) \simeq \Ext_{\Pbold}^*(I^{(r)} \circ A, S^{p^r}_V \circ B) \; \oplus \; \Ext_{\Pbold}^*(I^{(r)} \circ A, S^{p^r}_V \circ B')\]  
	which proves the case $F=S^{p^r}_V$. For an arbitrary $F$, take an injective coresolution $F \hookrightarrow J^*$ and form the spectral sequences
	\begin{align*}
	E_1^{s,t} = \Ext_{\Pbold}^t(I^{(r)} \circ A, J^s \circ B \; \oplus J^s \circ B') \Rightarrow \Ext_{\Pbold}^{s+t}(I^{(r)} \circ A, F \circ B \; \oplus \; F \circ B') \\
	F_1^{s,t} = \Ext_{\Pbold}^t(I^{(r)} \circ A, J^s \circ (B \oplus B')) \Rightarrow  \Ext_{\Pbold}^{s+t}(I^{(r)} \circ A, F \circ (B  \oplus B'))
	\end{align*}
	The inclusions $B, B' \subset B \oplus B'$ induce a morphism
	\[ \psi : \Ext_{\Pbold}^{s+t}(I^{(r)} \circ A, F \circ B \: \oplus \: F \circ B') \rightarrow \Ext_{\Pbold}^{s+t}(I^{(r)} \circ A, F \circ (B \oplus B')) \]
	as well as a morphism of spectral sequences $\varphi : E^{*,*} \rightarrow F^{*,*}$ which equals $\psi$ on the abutment. But we know that $\varphi$ is an isomorphism since the $J^s$ are injective. Then $\psi$ is an isomorphism and the theorem is proved.
\end{proof}

\section{A general conjecture}

We now rewrite the computations of Section \ref{sec-ext-comp} in order to place them in a more general context. 

Firstly, as we have already recalled it in Section \ref{sec-31}, the evaluation of a strict polynomial functor on a $\mathbb{Z}$-graded vector space $E^*$ of finite total dimension yields a $\mathbb{Z}$-graded vector space $F(E^*)$. Replacing $E^*$ by $E^*\otimes V$ for a finite dimensional vector space $V$ considered as homogeneous of degree $0$, we obtain a graded vector space $F(E^*\otimes V)$ which is actually a strict polynomial functor of the variable $V$, denoted by $F_{E^*}$ \cite[Lemma 2.8]{TouzeENS}. 
We need to generalise this to the case of arbitrary $\mathbb{Z}$-graded vector spaces. 
\begin{definition}\label{parametr}
Let $E^*$ be a $\mathbb{Z}$-graded vector space, and let $F \in \Pcal_d$. 
\begin{enumerate}
\item
We let $F_{E^*}$ be the $\mathbb{Z}$-graded $d$-homogeneous strict polynomial functor defined by 
\[F_{E^*}=\mathop{colim}_{{E'}^*\subset E^*}F_{{E'}^*}\;,\]
where the colimit is taken over the poset of finite dimensional graded vector subspaces ${E'}^*$ of $E^*$, ordered by inclusion. We call $F_{E^*}$ the \emph{lower parametrisation of $F$ by $E^*$}.
\item We also denote by $F^{E^*}$ the functor $F_{\Hom_k(E^*,k)}$ and we call it the \emph{upper parametrisation of $F$ by $E^*$}.
\end{enumerate}
\end{definition}

Write $\Pcal_d^*$ for the category of $\mathbb{Z}$-graded $d$-homogeneous strict polynomial functors. Then $F_{E^*}, F^{E^*} \in \Pcal_d^*$ and they have the following properties: 

\begin{prop}\label{parametr-properties}
\begin{enumerate}
\item Lower and upper parametrisations by $E^*$ define exact functors
\[\Pcal_d\to \Pcal_d^*\;.\]
Moreover, both operations are natural with respect to $E^*$.
\item If $E$ has finite total dimension, then $F_{E^*}(V)=F(E^*\otimes V)$ is the graded vector space described in Section \ref{sec-31}.
\item Assume that $E$ is degreewise finite-dimensional and zero in negative degrees. For all degrees $i$, let $E^{*\le i}$ be the graded subspace of $E^*$ which is equal to $E^*$ in degrees less or equal to $i$ and zero in degrees higher than $i$. Then the inclusion $E^{*\le i}\hookrightarrow E^*$ induces a monomorphism $F_{E^{*\le i}} \to F_{E^{*}}$ which is an isomorphism in degrees less or equal to $i$.
\item Make the same hypothesis on $E^*$ as in point (3). Then there are bigraded isomorphisms, natural in $F$, $G$ and $E^*$:
\[\Ext^*_\Pcal(F^{E^*},G)\simeq \Ext^*_\Pcal(F,G_{E^*})\;.\]
\end{enumerate}
\end{prop}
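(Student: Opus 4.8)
The plan is to prove the four assertions of Proposition \ref{parametr-properties} in order, since each builds on the previous one. For (1), I would first note that the operation $F\mapsto F_{{E'}^*}$ for finite-dimensional ${E'}^*$ is exact: a short exact sequence $0\to F'\to F\to F''\to 0$ in $\Pcal_d$ is evaluated at $V={E'}^*\otimes(-)$, which is exact since precomposition by the functor $({E'}^*\otimes-)$ is exact (it is just a sum of copies of the identity after choosing a basis), and the $\mathbb{G}_m$-weight decomposition that defines the grading is a decomposition along a semisimple group action, hence preserves exactness. Then exactness of the colimit over the filtered poset of finite-dimensional graded subspaces ${E'}^*\subset E^*$ gives exactness of lower parametrisation in general; upper parametrisation is just lower parametrisation precomposed with $E^*\mapsto \Hom_k(E^*,k)$, which is exact (contravariant) on the level of graded vector spaces, so exactness transfers. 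Naturality in $E^*$ is immediate from functoriality of the colimit construction.

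Point (2) is essentially a definitional check: when $E^*$ has finite total dimension the poset of finite-dimensional graded subspaces has a maximal element, namely $E^*$ itself, so the colimit stabilises and $F_{E^*}=F_{E^*}$ in the sense of \cite[Lemma 2.8]{TouzeENS}, which by construction is $V\mapsto F(E^*\otimes V)$ with the $\mathbb{G}_m$-weight grading described in Section \ref{sec-31}. For point (3), under the hypothesis that $E^*$ is degreewise finite-dimensional and concentrated in nonnegative degrees, I would argue that the grading on $F_{E^*}$ is itself concentrated in nonnegative degrees, and — crucially — that the part of $F_{E^*}$ in degrees $\le i$ only sees the part of $E^*$ in degrees $\le i$: a monomial in $F(E^*\otimes V)$ of weight $\le i$ can only involve basis vectors of $E^*$ of degree $\le i$ (here one uses that all degrees are $\ge 0$, so weights add up without cancellation). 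This gives that the monomorphism $F_{E^{*\le i}}\to F_{E^*}$, induced by the inclusion and injective by exactness of parametrisation, is onto in degrees $\le i$.

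The main work — and the main obstacle — is point (4), the adjunction-type isomorphism $\Ext^*_\Pcal(F^{E^*},G)\simeq \Ext^*_\Pcal(F,G_{E^*})$. In the finite-dimensional case this is a known duality (parametrisation by $E^*$ and by its dual are adjoint, essentially because $\Hom_\Pcal(\Gamma^{d,V}_{E^*},G)\simeq G(E^*\otimes V)^*$-type Yoneda computations match up; see \cite[Lemma 2.8]{TouzeENS} and surrounding discussion); at the $\Ext$ level one resolves $F$ by parametrised divided powers $\Gamma^{d,V}$, which parametrise nicely, and checks the isomorphism degreewise via Yoneda's lemma, then compares the two Ext computations. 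For arbitrary $E^*$ satisfying the hypothesis of (3), I would reduce to the finite-dimensional case using point (3): write $E^*$ as the filtered union of its truncations $E^{*\le i}$; then $G_{E^*}=\mathop{colim}_i G_{E^{*\le i}}$ and, by (3), this colimit is eventually constant in each fixed internal degree, so computing $\Ext^*_\Pcal(F,-)$ — which commutes with filtered colimits in the second variable because $F$ admits a resolution by the finitely generated projectives $\Gamma^{d,V}$ — reduces the right-hand side to a colimit of the finite-dimensional case; the same truncation on the left, using $F^{E^{*\le i}}\to F^{E^*}$ and the dual statement to (3), identifies the left-hand side with the matching colimit. The delicate point to get right is the interaction of the internal $\mathbb{Z}$-grading with the cohomological grading: one must check the colimit stabilisation happens uniformly enough that the isomorphism of Ext-groups is genuinely an isomorphism of bigraded vector spaces, and that naturality in $F$, $G$ and $E^*$ is preserved through the colimit. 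I expect this bookkeeping, rather than any conceptual difficulty, to be where the care is needed.
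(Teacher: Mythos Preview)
Your proposal is correct and follows essentially the same route as the paper: parts (1)--(3) match almost verbatim (your weight argument for (3) is in fact more explicit than the paper's dimension-count), and for (4) both you and the paper reduce to finite-dimensional $E^*$ via the truncations of (3), then establish the adjunction first for $F=\Gamma^{d,V}$ by the Yoneda computation $(\Gamma^{d,V})^{E^*}\simeq \Gamma^{d,E^*\otimes V}$, and finally pass to arbitrary $F$ by resolving. The only cosmetic difference is that you phrase the reduction step as a filtered-colimit argument while the paper simply says ``by (3) it suffices to prove the isomorphism for each $E^{*\le i}$''; these are equivalent once one observes (as you do) that both sides stabilise in each fixed internal degree.
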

\begin{proof}
(1) follows from \cite[Lemma 2.8]{TouzeENS} and exactness of filtered colimits. 

If $E^*$ has finite total dimension, the poset in Definition \ref{parametr}(1) is finite and contains $E^*$ as a final object. This implies that the colimit is equal to $F_{E^*}$ as stated in (2).

To prove (3), the graded monomorphism $\tau_i : F_{E^{* \le i}} \hookrightarrow F_{E^*}$ is simply the one induced by functoriality by the canonical inclusion $E^{* \le i} \hookrightarrow E^*$. Moreover, the hypothesis implies that each $E^{* \le i}$ has finite total dimension. Since any finite-dimensional subspace of $E^*$ is included in $E^{* \le n}$ for some $n$, $F_{E^*}$ is equal to the colimit of the chain of inclusions $F_{E^{* \le 0}} \subset F_{E^{* \le 1}} \subset \dots \subset F_{E^{* \le n}} \subset \dots$, i.e. to the quotient of $\bigoplus_{d \ge 0} F_{E^{* \le d}}$ obtained by identifying all components of the same degree. In particular, the components of degree less than $i$ have all representants in $F_{E^{* \le i}}$. Then the source and the target of $\tau_i$ have the same dimension in degrees less than $i$, making $\tau_i$ an isomorphism in that case.
 
Let us prove (4). By point (3) it is sufficient to provide bigraded isomorphisms 
\[\Ext^*_\Pcal(F^{E^{* \le i}},G)\simeq \Ext^*_\Pcal(F,G_{E_{* \le i}}) \]
for all $i\ge 0$. In other words, we may suppose that $E^*$ is of finite total dimension and use the explicit description of point (2). Denote by $V^\vee = \Hom(V,k)$ the dual of a (possibly graded) vector space. One has
 \[
 \begin{array}{cc}
 (\Gamma^{d,V})^{E^*}(W) = \Gamma^{d,V}((E^*)^\vee \otimes W) = \Gamma^d \Hom(V, (E^*)^\vee \otimes W) \\
 \simeq \Gamma^d \Hom(E^* \otimes V, W) =  \Gamma^{d, E^* \otimes V}(W)
 \end{array}
 \]
giving an isomorphism $(\Gamma^{d,V})^{E^*} \simeq \Gamma^{d, E^* \otimes V}$ natural in $V$ and $E^*$. In particular, in each degree, $(\Gamma^{d,V})^{E^*}$ is a projective object in $\Pcal_d$. A double application of Yoneda lemma \ref{yoneda} gives then for each $G$ 
\[ \Hom_\Pcal((\Gamma^{d,V})^{E^*}, G) \simeq G(E^* \otimes V) \simeq \Hom_\Pcal(\Gamma^{d,V}, G_{E^*})
\]
which respects the gradings by \cite[Cor. 2.12]{FS}. By projectivity of $\Gamma^{d,V}$ and $(\Gamma^{d,V})^{E^*}$, the isomorphism on the $\Ext^*$ follows at once for $F=\Gamma^{d,V}$. For a general $F$, take a projective resolution $P_* \rightarrow F$. By exactness of the parametrisation and by the fact that each $P_n$ is a sum of $\Gamma^{d,V}$, $(P_*)^{E^*}$ is a projective coresolution of $F^{E^*}$ degreewise. Then from the previous case we deduce 
\[ \Ext^*_\Pcal(F^{E^*}, G) \simeq H^*(\Hom_\Pcal((P_*)^{E^*}, G)) \simeq H^*(\Hom_\Pcal(P_*, G_{E^*})) \simeq \Ext^*_\Pcal(F, G_{E^*}) \]

\end{proof}

We can now give a reformulation of Theorem \ref{thm-main}.
\begin{cor}
For all additive homogeneous superfunctors $A$ and $B$ of degree $p^s \; \; (s > 0)$ with finite dimensional values and for all homogeneous strict polynomial functor $F$ of degree $p^r$, there is a graded isomorphism (where we take the total degree on the left-hand side), natural in $A$, $B$, $F$:
\[
\Ext^*_{\Pcal}(I^{(r)},F_{\Ext^*_{\Pbold}(A,B)})\simeq 
\Ext^*_{\Pbold}(I^{(r)}\circ A, F\circ B)\;.
\]
\end{cor}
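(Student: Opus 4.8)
The plan is to deduce the Corollary from Theorem~\ref{thm-main} by recognising the factor $\Ext^*_{\Pbold}(A,B)^{(r)}$ as an instance of parametrisation. Set $E^*:=\Ext^*_{\Pbold}(A,B)$. Since $A$ and $B$ have finite-dimensional values, Theorem~\ref{classaddfunc} writes them as finite direct sums of the standard indecomposable additive superfunctors, so Proposition~\ref{pr-alg} together with Lemma~\ref{extwithpf} shows that $E^*$ is finite-dimensional in each cohomological degree and vanishes in negative degrees; in particular it satisfies the hypotheses of Proposition~\ref{parametr-properties}(3)--(4). By Theorem~\ref{thm-main}, the map $\Psi_{A,B}$ provides an isomorphism, natural in $A,B,F$,
\[
\Ext^*_{\Pbold}(I^{(r)}\circ A,F\circ B)\;\simeq\;\Ext^*_{\Pcal}(I^{(r)},F)\otimes (E^*)^{(r)}\,,
\]
so it remains to exhibit a natural bigraded isomorphism $\Ext^*_{\Pcal}(I^{(r)},F_{E^*})\simeq \Ext^*_{\Pcal}(I^{(r)},F)\otimes (E^*)^{(r)}$.

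First I would apply the adjunction isomorphism of Proposition~\ref{parametr-properties}(4), taking its ``$F$'' to be $I^{(r)}$ and its ``$G$'' to be $F$, which yields a bigraded isomorphism $\Ext^*_{\Pcal}(I^{(r)},F_{E^*})\simeq \Ext^*_{\Pcal}\bigl((I^{(r)})^{E^*},F\bigr)$ natural in $F$ and in $E^*$. The next step is to identify the graded strict polynomial functor $(I^{(r)})^{E^*}$: by definition $(I^{(r)})^{E^*}(V)=\bigl((E^*)^{\vee}\otimes V\bigr)^{(r)}$, and since the Frobenius twist of a finite-dimensional vector space commutes with tensor products and with dualisation, this is naturally isomorphic to $\bigl((E^*)^{(r)}\bigr)^{\vee}\otimes V^{(r)}$, the degree conventions being exactly those of Notation~\ref{nota-twist-ev}. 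Hence $(I^{(r)})^{E^*}\simeq I^{(r)}\otimes\bigl((E^*)^{(r)}\bigr)^{\vee}$ as objects of $\Pcal_{p^r}^*$, that is, a direct sum of degree-shifted copies of $I^{(r)}$ indexed by a homogeneous basis of $(E^*)^{(r)}$. Consequently
\[
\Ext^*_{\Pcal}\bigl((I^{(r)})^{E^*},F\bigr)\;\simeq\;\Ext^*_{\Pcal}(I^{(r)},F)\otimes\bigl((E^*)^{(r)}\bigr)^{\vee\vee}\;=\;\Ext^*_{\Pcal}(I^{(r)},F)\otimes (E^*)^{(r)}\,,
\]
where one checks that the internal $\mathbb{Z}$-grading on the right is the intended one: the grading reversal built into the adjunction of Proposition~\ref{parametr-properties}(4) cancels the reversal introduced by the double dualisation. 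Composing the three isomorphisms and invoking Theorem~\ref{thm-main} proves the Corollary; naturality in $A$, $B$, $F$ is inherited from that of parametrisation (Proposition~\ref{parametr-properties}(1)), of the adjunction, of the identification of $(I^{(r)})^{E^*}$, and of $\Psi_{A,B}$.

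The step I expect to be the main obstacle is the grading bookkeeping: one must simultaneously track the cohomological degree, the internal $\mathbb{Z}$-grading carried by $E^*$ (dilated by the factor $p^r$ under the twist, per Notation~\ref{nota-twist-ev}), the two grading reversals just mentioned, and the auxiliary $\Zdz$ super-grading on $E^*=\Ext^*_{\Pbold}(A,B)$, which is transported unchanged through every step and must be shown to coincide with the super-grading on $\Ext^*_{\Pbold}(I^{(r)}\circ A,F\circ B)$, so that the resulting isomorphism matches precisely the ``total degree'' prescription in the statement. A secondary point needing care is that the comparison $(I^{(r)})^{E^*}\simeq I^{(r)}\otimes\bigl((E^*)^{(r)}\bigr)^{\vee}$ is an isomorphism of strict polynomial functors and not merely an objectwise isomorphism of graded vector spaces; this holds because both sides are $p^r$-homogeneous in $V$ and the natural map $w^{(r)}\otimes v^{(r)}\mapsto (w\otimes v)^{(r)}$ is compatible with the action of $\Gamma^{p^r}\Hom(V,W)$.
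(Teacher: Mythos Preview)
Your proposal is correct and follows essentially the same route as the paper's own proof: both start from Theorem~\ref{thm-main}, use the classification together with Proposition~\ref{pr-alg} to see that $E^*$ is degreewise finite-dimensional, exploit the additivity of $I^{(r)}$ to identify $\Ext^*_\Pcal(I^{(r)},F)\otimes (E^*)^{(r)}$ with $\Ext^*_\Pcal\bigl((I^{(r)})^{E^*},F\bigr)$, and then apply the adjunction of Proposition~\ref{parametr-properties}(4). Your more explicit treatment of the identification $(I^{(r)})^{E^*}\simeq I^{(r)}\otimes\bigl((E^*)^{(r)}\bigr)^\vee$ and of the grading bookkeeping is a welcome elaboration of what the paper compresses into the phrase ``by additivity of $I^{(r)}$ and of $\Ext^*$ in the first variable''.
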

\begin{proof}
Set $E^* := \Ext^*_{\Pbold}(A,B)^{(r)}$. By hypothesis $A$ and $B$ have finite dimensional values, so they can be written as a finite sum of copies of Frobenius twists. From Proposition \ref{pr-alg} it follows that $E^*$ is finite-dimensional degreewise. Then, by additivity of $I^{(r)}$ and of $\Ext^*$ in the first variable, the isomorphism of Theorem \ref{thm-main} can be rewritten as 
\[ \Ext^*_{\Pcal}((I^{(r)})^{E^*},F)\simeq E^* \otimes \Ext^*_{\Pcal}(I^{(r)},F) \simeq 
\Ext^*_{\Pbold}(I^{(r)}\circ A, F\circ B)
\]
natural in $F,A,B$. An application of Proposition \ref{parametr-properties}(4) provides the desired isomorphism.
\end{proof}

This result is very similar to the computation of extensions between twisted strict polynomial functor in the classical setting, see \cite[Thm 4.6]{TouzeSurvey} and \cite{Chalupnik, TouzeUnivSS}. This leads us to the following conjecture.
\begin{conj}\label{conj}
For all additive homogeneous superfunctors $A$ and $B$ of degree $p^s\; \; (s > 0)$ with finite dimensional values and for all homogeneous strict polynomial functor $F$ and $G$ of degree $p^r$, there is a graded isomorphism (where we take the total degree on the left-hand side), natural in $A$, $B$, $F$, $G$:
\[
\Ext^*_{\Pcal}(G,F_{\Ext^*_{\Pbold}(A,B)})\simeq 
\Ext^*_{\Pbold}(G\circ A, F\circ B)\;.
\]
\end{conj}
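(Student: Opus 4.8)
The conjecture is the exact super analogue of the classical computation of $\Ext^*_{\Pcal}(G\circ I^{(r)},F\circ I^{(r)})$, and the natural plan is to imitate the proof of that formula given in \cite{TouzeUnivSS, Chalupnik, TouzeSurvey}. The statement has the same shape as Theorem \ref{thm-main} with $I^{(r)}$ replaced by an arbitrary $G\in\Pcal_{p^r}$, so the first task is to produce, for all $F,G\in\Pcal_{p^r}$ and all additive $A,B$ of degree $p^s$, a natural graded morphism
\[
\Psi_{G,F,A,B}\colon \Ext^*_{\Pcal}\bigl(G,\,F_{\Ext^*_{\Pbold}(A,B)}\bigr)\longrightarrow \Ext^*_{\Pbold}(G\circ A,\,F\circ B)
\]
extending $\Psi_{A,B}$. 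Setting $E^*:=\Ext^*_{\Pbold}(A,B)^{(r)}$ and using $\Ext^*_{\Pcal}(G,F_{E^*})\simeq \Ext^*_{\Pcal}(G^{E^*},F)$ (Proposition \ref{parametr-properties}(4)), this amounts to Yoneda-multiplying $x\circ B$, for $x\in\Ext^*_{\Pcal}(G^{E^*},F)$, by a universal class relating $G\circ A$ and $G^{E^*}\circ B$ in the derived category of $\Pbold_{ev}$. For $G=\Ibold_\ell^{(r)}$ this is the class behind the maps $\sigma_{A,B}$ of Proposition \ref{eee}; for general $G$ one must, as the authors indicate, construct \emph{super universal classes} in the spirit of \cite{TVdK}, extracted from the superized Troesch complex of Drupieski--Kujawa \cite{DrupKujawa}, just as the classical classes come from Troesch's $p$-complexes \cite{Troesch}. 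The key requirement is that these classes be natural in $G$ and multiplicative, so that $\Psi$ is a morphism of $\delta$-functors in $F$ and in $G$ that restricts to $\Psi_{A,B}$ and, crucially, is realised by an explicit map of complexes.

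Granting such a $\Psi$, I would prove it is an isomorphism by a triple dévissage. First, both sides are biadditive in $(A,B)$ — for the source by additivity of $\Ext^*_{\Pbold}(-,-)$ and of the twist, for the target by Pirashvili's Lemma \ref{pirashvili} exactly as at the end of the proof of Theorem \ref{thm-main} — so Lemma \ref{technicallemma} reduces the statement to $A,B\in\{\Ibold_0^{(s)},\Ibold_1^{(s)},\PF\circ\Ibold_0^{(s)},\PF\circ\Ibold_1^{(s)}\}$. Second, using an injective coresolution $F\hookrightarrow J^*$ by sums of injective cogenerators $S^\mu$ of $\Pcal_{p^r}$, together with exactness of $-\circ B$ (Corollary \ref{precompaddfunctors}) and of the parametrisation (Proposition \ref{parametr-properties}(1)), one gets a morphism of first-quadrant spectral sequences abutting to $\Psi$ whose $E_1$-terms are the instances of $\Psi$ for the $J^m$; this reduces the problem to $F=S^\mu$. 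Third, using a finite projective resolution $P_*\to G$ by sums of projective generators $\Gamma^{p^r,V}$ and exactness of $-\circ A$, one gets a first-quadrant hyper-$\Ext$ spectral sequence $E_1^{p,q}=\Ext^q_{\Pbold}(P_p\circ A,F\circ B)\Rightarrow \Ext^{p+q}_{\Pbold}(G\circ A,F\circ B)$; comparing it, through the chain-level $\Psi$, with the internally graded complex $\Hom_{\Pcal}(P_*,F_{E^*})$ computing $\Ext^*_{\Pcal}(G,F_{E^*})$ — whose own spectral sequence is trivially degenerate — both forces this spectral sequence to degenerate and identifies the abutment with $\Psi_{G,F,A,B}$, once $\Psi$ is known for $G=\Gamma^{p^r,V}$.

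One is then left with the base case $G=\Gamma^{p^r,V}$, $F=S^\mu$, which should be a direct, if involved, computation: via the super Troesch complex one builds an injective coresolution of $\Gamma^{p^r,V}\circ A$ in $(\Pbold_{p^{r+s}})_{ev}$ whose terms are sums of super-symmetric powers precomposed by $B$, so that $\Ext^*_{\Pbold}(\Gamma^{p^r,V}\circ A,S^\mu\circ B)$ is computed by $\Hom$-combinatorics between super-symmetric powers, and one matches it with $\Hom_{\Pcal}\bigl(\Gamma^{p^r,V},(S^\mu)_{E^*}\bigr)=(S^\mu)_{E^*}(V)$ — the bookkeeping being the super version of \cite{FFSS} and refining Proposition \ref{pr-alg}.

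The main obstacle is, as in the classical case, the construction and formal study of the super universal classes: one must control the multiplicative and functorial structure of the Drupieski--Kujawa complex — in particular how the Koszul signs and the parity-change functor $\PF$ interact with it — so as to obtain classes compatible with composition of (super)functors and with cup products, and to realise $\Psi$ as a map of complexes so that the spectral-sequence degeneration in the third step is automatic. This is precisely the technical heart of \cite{TVdK} and is not addressed in the present paper. A secondary, still non-trivial, point is the fully parametrised base-case computation $\Ext^*_{\Pbold}(\Gamma^{p^r,V}\circ A,S^\mu\circ B)$: Proposition \ref{pr-alg} only records the case $A=B=\Ibold_\ell^{(r)}$, $V=k$, $\mu=(p^r)$, and the general case would have to be established along the lines of \cite{FFSS}.
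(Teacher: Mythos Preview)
The statement you are addressing is labelled \emph{Conjecture} in the paper, and the paper does \emph{not} prove it. There is therefore no proof to compare your attempt against. What the paper does provide is a brief indication, in the introduction and after Corollary~5.2, of the strategy the authors envisage: construct super analogues of the universal classes of \cite{TVdK} by investigating the superized Troesch complex of Drupieski--Kujawa \cite{DrupKujawa}, and use them to produce the isomorphism via cup products, following the pattern of \cite[Section~5]{TouzeSurvey}.

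Your proposal is a faithful and well-articulated expansion of exactly this strategy. You correctly identify the two genuine obstacles: (i) the construction and formal properties (naturality, multiplicativity, compatibility with $\PF$) of the super universal classes, which is the analogue of the technical core of \cite{TVdK} and is entirely open here; and (ii) the fully parametrised base-case computation $\Ext^*_{\Pbold}(\Gamma^{p^r,V}\circ A,\,S^\mu\circ B)$, which goes well beyond Proposition~\ref{pr-alg}. Your d\'evissage scheme (reduce in $A,B$ via Lemma~\ref{technicallemma} and Pirashvili, reduce in $F$ via an injective coresolution and the spectral-sequence comparison of Proposition~\ref{Psiiso}, reduce in $G$ via a projective resolution) is the natural one and mirrors the classical argument. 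None of this is wrong, but none of it constitutes a proof either: as you yourself acknowledge, the key input---the super universal classes with the required properties---is not available in the paper, and producing it is precisely the open problem the conjecture records.
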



\bibliographystyle{plain}
\bibliography{bibliographie.bib}

\end{document}